\title{Square-reflexive polynomials}
\author{Karim Johannes Becher}
\author{Parul Gupta}
\address{University of Antwerp, Department of Mathematics, Middelheim\-laan~1, 2020 Antwerpen, Belgium.}
\email{karimjohannes.becher@uantwerpen.be}
\email{parul.gupta@uantwerpen.be}
\address{Technische Universit\"at Dresden, Institut f\"ur Algebra, 01062 Dresden, Germany.}
\address{IISER Pune, Dr.~Homi Bhabha Road, Pashan, Pune 411 008, India}
\email{parul.gupta@iiserpune.ac.in}
\thanks{This work was supported by the \emph{Fonds Wetenschappelijk Onderzoek -- Vlaanderen (FWO)} in the \emph{FWO Odysseus Programme} (project `{Explicit Methods in Quadratic Form Theory}'), the \emph{Bijzonder Onderzoeksfonds (BOF), University of Antwerp} (project BOF-DOCPRO-4, 2865), and the \emph{Science and Engeneenring Research Board  (SERB)}, India
(Grant CRG/2019/000271).}
\date{15.07.2021}
\newcommand{\la}{\langle}
\newcommand{\ra}{\rangle}
\newcommand{\qq}{\mathbb Q}
\newcommand{\cc}{\mathbb C}
\newcommand{\hh}{\mathbb H}
\newcommand{\nat}{\mathbb{N}} 
\newcommand{\zz}{\mathbb Z}
\newcommand{\N}{\mathsf N}
\newcommand{\mc}[1]{\mathcal{#1}}
\newcommand{\mf}[1]{\mathfrak{#1}}
\newcommand{\mg}[1]{{#1}^{\times}}
\newcommand{\sq}[1]{{#1}^{\times 2}}
\newcommand{\scg}[1]{\mg{#1}/\sq{#1}}
\newcommand{\lla}{\la\!\la}
\newcommand{\rra}{\ra\!\ra}
\newcommand{\ovl}{\overline}
\renewcommand{\min}{\mathsf{min}}
\renewcommand{\max}{\mathsf{max}}
\newcommand{\car}{\mathsf{char}}
\newcommand{\lra}{\rightarrow}
\newcommand{\supp}{\mathsf{Supp}}
\renewcommand{\k}{\mathsf{k}}
\newcommand{\mfm}{\mathfrak{m}}
\newcommand{\lc}{\mathsf{lc}}
\renewcommand{\deg}{\mathsf{deg}}
\renewcommand{\dim}{\mathsf{dim}}
\renewcommand{\sup}{\mathsf{sup}}
\renewcommand{\setminus}{\smallsetminus}
\renewcommand{\bmod}{\,\mathsf{mod}\,}
\newcommand{\vf}{\varphi}
\newcommand{\s}{\sigma}
\newcommand{\alg}{\mathsf{alg}}
\numberwithin{equation}{section}
\newtheorem*{thm*}{Theorem}
\newtheorem{thm}[equation]{Theorem}
\newtheorem*{thmA}{Theorem~A}
\newtheorem*{thmB}{Theorem~B}
\newtheorem*{thmC}{Theorem~C}
\newtheorem{prop}[equation]{Proposition}
\newtheorem{cor}[equation]{Corollary}
\newtheorem{lem}[equation]{Lemma}
\newtheorem{qu}[equation]{Question}
\newtheorem*{qu*}{Questions}
\theoremstyle{definition}
\newtheorem{ex}[equation]{Example}
\newtheorem{exs}[equation]{Examples}
\newtheorem{rem}[equation]{Remark}
\renewenvironment{proof}{\par\noindent {\em Proof:}}{\hfill$\Box$\medskip}
\theoremstyle{plain}
\begin{document}
\maketitle

\begin{abstract}
For a field $E$ of characteristic different from $2$ and cohomological $2$-dimension one, quadratic forms over the rational function field $E(X)$ are studied.
A characterisation in terms of polynomials in $E[X]$ is obtained for having that quadratic forms over $E(X)$ satisfy a local-global principle with respect to discrete valuations that are trivial on $E$.
In this way new elementary proofs for the local-global principle are achieved in the cases where $E$ is finite
 or pseudo-algebraically closed.
The study is complemented by various examples.

\medskip
\noindent
{\sc{Classification (MSC 2010):} 11E04, 12E05, 12E10, 12E20, 12E30} 

\medskip
\noindent
{\sc{Keywords:}} quadratic form, isotropy, rational function field, valuation, local-global-principle, $u$-invariant, finite field, pseudo-algebraically closed field, Milnor $K$-theory, ramification sequence, symbol, common slot, strong linkage, transfer, hyperelliptic curve
\end{abstract}

\section{Introduction}

Consider a field $E$ of characteristic different from $2$.
Quadratic forms over $E$ give us only a glimpse into the arithmetic of $E$.
The quadratic form theory of the rational function field $E(X)$ provides us with a much broader view on the arithmetic properties of $E$.
With this motivation we propose an analysis of quadratic form theory of $E(X)$ in terms of properties of polynomials in $E[X]$.

A quadratic form is \emph{isotropic} if it has a nontrivial zero, otherwise it is \emph{aniso\-tropic}.
In this article we study isotropy of quadratic forms over $E(X)$ in terms of local conditions.
We focus on the case where $E$ has cohomological $2$-dimension $0$~or~$1$, or equivalently (see~\cite[Theorem~6.1.8]{GS}),~such that, over every finite field extension of $E$, all $3$-dimensional quadratic forms are isotropic.
In particular, $-1$ is a sum of two squares in $E$ and hence $E$ has no field orderings.
Well-known examples of such fields $E$ are finite fields, function fields of curves over $\cc$, the field of Laurent series $\cc(\!(t)\!)$ and, more generally, $\mc{C}_1$-fields, in the terminology of Tsen-Lang theory (see \cite{Lan52} or \cite[Section 6.2]{GS}). 

By a \emph{$\zz$-valuation} on a field we mean a valuation with value group equal to~$\zz$.
We say that \emph{quadratic forms over $E(X)$ satisfy the local-global principle for isotropy}, if~an arbitary quadratic form over $E(X)$ is isotropic if and only if it has a nontrivial zero over all completions of $E(X)$ with respect to $\zz$-valuations on $E(X)$.
When $E$ is a finite field, then quadratic forms over $E(X)$ satisfy the local-global principle for isotropy.
This is a special case of the Hasse-Minkowski Theorem (see \cite[Chap.~VI, Theorem 66.1]{OM}). 
A typical proof of this fact uses that a finite field $E$ has a unique field extension of any given degree, together with  
 Kornblum's classical analogue for $E[X]$ of Dirichlet's theorem on primes in an arithmetic progression (see \Cref{T:Kornblum}).
However, in combination these two properties of $E$ are so restrictive that a proof using these ingredients does not suggest how to prove a local-global principle over $E(X)$ in other cases.

We restrict our focus to $\zz$-valuations on $E(X)$ which are trivial on $E$ (this condition is trivial when $E$ is finite), and we aim to characterise the fields $E$ such that quadratic forms over $E(X)$ satisfy the local-global principle for isotropy with respect to those valuations.

A detailed analysis of the finite field case led us to investigate a property for polynomials which is intimately related to the local-global principle over $E(X)$ when $E$ has cohomological $2$-dimension at most $1$. 
We call this property \emph{square-reflexivity}. It is introduced in \Cref{RS}, where different characterisations are given (\Cref{degreebound}), involving the Milnor $K$-group $\k_2E(X)$.

For a commutative ring $R$, we denote its multiplicative group by $\mg{R}$ and set $\sq{R}=\{x^2\mid x\in \mg{R}\}$, and we call an element of $R$ \emph{square-free} if it is not equal to $x^2y$ for any $x\in R\setminus \mg{R}$ and $y\in R$.

Consider now a polynomial $f\in E[X]$ and let $c \in \mg E$ be its leading coefficient.
We call $f$ \emph{square-reflexive} if $f$ is square-free and every element in the $E$-algebra $E[X]/(f)$ whose norm lies in $\sq{E}\cup c\sq{E}$  is given by a square times a polynomial $g\in E[X]$ such that $f$ is a square modulo $g$.
Every square-free polynomial in $E[X]$ of degree at most $2$ is square-reflexive (\Cref{SRuptodeg2}), but for polynomials of higher degree, square-reflexivity becomes a very restrictive condition.
For example, the presence of a discrete valuation or a field ordering on $E$ gives a natural obstruction for certain polynomials 
to be square-reflexive (\Cref{P:sqref-deg3} and \Cref{P:real-srp}). 
Even just having that in $E[X]$ every square-free polynomial of degree $4$ divides a square-reflexive polynomial implies that every  $3$-dimensional quadratic form over $E$ is isotropic (\Cref{L:sqr-deg4-k2triv}).

We call the field $E$ \emph{square-reflexive} if every square-free polynomial in $E[X]$ is square-reflexive.
In \Cref{LGPI4} we relate this to isotropy criteria  over $E(X)$:

\begin{thmA}
The field $E$ is square-reflexive if and only if the following hold: 
\begin{enumerate}[$(i)$]
\item Every $5$-dimensional quadratic form over $E(X)$ is isotropic.
\item Quadratic forms over $E(X)$ satisfy the local-global principle for isotropy with respect to $\zz$-valuations which are trivial on $E$.
\end{enumerate}
\end{thmA}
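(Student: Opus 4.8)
The plan is to translate isotropy over $E(X)$ into conditions at the $\zz$-valuations trivial on $E$, which correspond to the monic irreducible polynomials in $E[X]$ together with the degree valuation at infinity. At such a valuation the completion is a Laurent series field $E(\mfp)(\!(t)\!)$ over the residue field $E(\mfp)$, a finite extension of $E$; since $\mathrm{cd}_2 E\le 1$ forces $u(E(\mfp))\le 2$, Springer's theorem gives $W(E(\mfp)(\!(t)\!))\cong W(E(\mfp))\oplus W(E(\mfp))$ and reduces local isotropy of a form to the isotropy of its two residue forms. In particular $u(E(\mfp)(\!(t)\!))\le 4$, so \emph{every} $5$-dimensional form over $E(X)$ is isotropic over every completion; granting $(ii)$, such a form is then isotropic over $E(X)$, so already $(ii)\Rightarrow(i)$. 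This shows the two conditions are tightly linked and tells me where the genuine work sits.

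For the forward direction I would assume $E$ square-reflexive. The characterisation of square-reflexivity through $\k_2E(X)$ in \Cref{degreebound} first gives enough control on symbols to bound the $u$-invariant of $E(X)$ by $4$, which is condition $(i)$. To obtain $(ii)$, take a form $q$ over $E(X)$ that is isotropic over every completion; diagonalising, clearing denominators and splitting off hyperbolic planes reduces $q$ to square-free polynomial coefficients and, by the bound just obtained, to dimension at most $4$. Its obstruction to isotropy is organised by the residue exact sequence
\[ 0 \to W(E) \to W(E(X)) \xrightarrow{(\partial_\mfp)} \bigoplus_{\mfp} W(E(\mfp)) \to 0, \]
whose residue terms are controlled by $u(E(\mfp))\le 2$, so the essential content lies in the class of $q$ in $\k_2E(X)$, a sum of quaternion symbols $\{f_i,g_i\}$. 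Local isotropy makes all residues of this class vanish, and square-reflexivity of the polynomials involved—again via \Cref{degreebound}—forces the global symbol to split, producing an isotropy vector for $q$; the discriminant and constant parts are then dealt with by unique factorisation in $E[X]$ together with the reciprocity between the finite valuations and the one at infinity, which is exactly where the arithmetic of $E$ enters.

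For the converse I would start from a square-free $f\in E[X]$ with leading coefficient $c$ and an element $\theta\in E[X]/(f)$ whose norm lies in $\sq E\cup c\sq E$, and build from this data a quadratic form $q_\theta$ over $E(X)$, assembled from a lift of $\theta$ and the factor $f$ so that its residues at the primes dividing $f$ record $\theta$ while its residue at infinity records $N(\theta)$. The norm condition is precisely what trivialises the discriminant of $q_\theta$ and pins down its behaviour at infinity, the two cosets $\sq E$ and $c\sq E$ corresponding to whether $q_\theta$ extends or ramifies there. One then checks that $q_\theta$ is isotropic over every completion, so by $(ii)$ it is isotropic over $E(X)$, while $(i)$ ensures the dimension count leaves room for the argument. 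Reading the isotropy vector back as a polynomial identity yields a $g\in E[X]$ with $\theta$ equal to a square times $g$ in $E[X]/(f)$ and with $f$ a square modulo $g$, which is square-reflexivity of $f$; the degree-$\le 2$ case is already covered by \Cref{SRuptodeg2}.

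The main obstacle will be this dictionary in both directions: arranging $q_\theta$ so that its places of isotropy correspond \emph{exactly} to the congruences $\theta\equiv\square\cdot g\pmod f$ and $f\equiv\square\pmod g$, and extracting the witnessing polynomial $g$ from an isotropy vector rather than merely up to squares and up to the auxiliary primes introduced when clearing denominators. Controlling the valuation at infinity uniformly across the two norm cases, and keeping the $\k_2$-splitting supplied by \Cref{degreebound} compatible with these congruences, is where the real effort lies.
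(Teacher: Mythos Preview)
Your sketch has the right architecture and in the converse direction is close to what the paper does, but there is a genuine error in the forward direction that would block the proof.

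You assert that for a $4$-dimensional form $q\simeq\langle f,-g,-h,gh\rangle$ (determinant $f$), local isotropy at every place makes ``all residues of this class vanish'', where the class is the symbol $\{g,h\}\in\k_2E(X)$. This is false. If $p\in\mc P$ with $p\nmid f$ and $q$ is isotropic over $E(X)_p$, the conclusion (see \Cref{LGPlocalslot}) is only that $\partial_p(\{g,h\})\in\{0,\{\ovl f\}\}$ in $\k_1E_p$; the residue need not be zero. So local isotropy does \emph{not} force $\partial(\{g,h\})=0$, and you cannot conclude that $\{g,h\}$ splits. What the paper does instead is set $S=\supp(\partial(\{g,h\}))\setminus\{\infty\}$, form the correction $\alpha=\{f,\pm c'\prod_{p\in S}p\}$ (with signs and a constant adjusted according to the parity of $\deg f$), and show via \Cref{LGPlocalslot} that $\rho=\partial(\alpha-\{g,h\})$ has $\supp(\rho)\subseteq\supp(f)\cup\{\infty\}$ with $\rho_\infty\in\{0,\{\lc(f)\}\}$. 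Square-reflexivity of $f$ then produces $q$ with $\rho=\partial(\{f,q\})$, and since $\k_2E=0$ one obtains that $\{g,h\}$ has $f$ as a slot, whence $q$ is isotropic by \Cref{firstslot}. The point is not that $\{g,h\}$ becomes trivial, but that it acquires the \emph{specific} slot $f$.

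A second issue: your opening step assumes $\mathrm{cd}_2E\leq 1$, hence $u(E_p)\leq 2$ for all $p$, as ambient context. Theorem~A carries no such hypothesis. In the paper this is deduced from condition $(i)$ via \Cref{L:rat-res-field-u}: $u(E(X))<8$ already forces $u(\kappa_v)\leq 2$ for every nondyadic $\zz$-valuation $v$. In particular your observation ``$(ii)\Rightarrow(i)$'' is not available without first knowing something about $E$; the conditions are genuinely paired.

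For the converse, your plan to build from $(\alpha,f)$ a $4$-dimensional form $q_\alpha$ that is locally isotropic, then read off $g$ from a global isotropy vector, matches the paper's \Cref{L:main}. The missing ingredient you flag is resolved there by two devices: a Bezoutian transfer (\Cref{bezutian}) produces a monic square-free $g$ with $\deg g<\deg f$, $\deg g\equiv\deg f-1\bmod 2$, and $g$ realising the given square-class of $\alpha$ modulo $f$; and a minimality assumption on $\deg f$ lets one apply square-reflexivity of $c'g$ to find $h$ with $\rho'=\partial(\{c'g,h\})$, yielding $\varphi=\langle f,-c'g,-h,c'gh\rangle$. The parity control on $\deg g$ is exactly what makes the case analysis at $\infty$ go through.
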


The property that $E$ is square-reflexive can be described by a first-order theory in the language of fields (\Cref{R:srf-1st-order}). 
Hence, by Theorem~A, the combination of $(i)$ and $(ii)$ 
can be expressed for the base field $E$ by a first-order theory in the language of fields.
We do not know whether $(ii)$ alone 
can be expressed by a first-order theory.
For $(i)$ this is the case, by \cite[Theorem~1]{Pre87}, where a degree bound is given for isotropic vectors of an isotropic quadratic form over $E(X)$ in terms of the dimension and the degrees of the coefficients.

We have two main types of examples of square-reflexive fields:

\begin{thmB}
If $E$ is finite or pseudo-algebraically closed, then $E$ is square-reflexive.
\end{thmB}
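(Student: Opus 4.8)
The plan is to verify the defining condition of square-reflexivity directly for each class of fields and then to let Theorem~A convert this into the advertised local-global principle over $E(X)$; in particular the argument must not invoke part $(ii)$ of Theorem~A, which would be circular. So fix a square-free $f\in E[X]$ with leading coefficient $c\in\mg E$, write $A=E[X]/(f)=\prod_i L_i$ with $L_i=E[X]/(f_i)$ for the distinct irreducible factors $f_i$ of $f$, and let $\alpha=(\alpha_i)_i\in\mg A$ satisfy $\N(\alpha)\in\sq E\cup c\sq E$, where $\N=\prod_i N_{L_i/E}$. What has to be produced is a polynomial $g\in E[X]$, coprime to $f$, such that $f$ is a square modulo $g$ and $\alpha\bar g^{-1}\in\sq A$; the latter amounts to requiring that, for each $i$, the elements $g$ and $\alpha_i$ lie in the same square class of $\mg{L_i}$. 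I would first record the reduction that it suffices to find such a $g$ which is irreducible, since then ``$f$ is a square modulo $g$'' becomes the single statement that $f\bmod g$ is a square in the residue field $E[X]/(g)$. Alternatively one may run the whole reduction through the $\k_2E(X)$-description of \Cref{degreebound} and its ramification sequence, so that the square classes above reappear as the prescribed residues of a symbol.

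For the finite field case I would exploit the special feature that over a finite field an element of $\mg{L_i}$ is a square if and only if its norm to $E$ is a square, which follows from surjectivity of the norm. Hence the condition of matching square classes fixes, through the quadratic reciprocity law in $E[X]$, each Legendre symbol $\left(\frac{f_i}{g}\right)$ up to an explicit sign $\epsilon_i\in\{\pm1\}$ depending only on $\deg f_i$, $\deg g$ and $|E|$, together with a leading-coefficient contribution. Since ``$f$ is a square modulo $g$'' reads $\prod_i\left(\frac{f_i}{g}\right)=1$, and since $\prod_i\left(\frac{g}{f_i}\right)$ equals the quadratic character of $\N(\alpha)$ by the norm-square correspondence, the entire obstruction collapses to a single global sign. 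The hypothesis $\N(\alpha)\in\sq E\cup c\sq E$ is exactly what leaves this sign adjustable: the two cases feed the leading-coefficient term and the degree parity of $g$, and \Cref{T:Kornblum} then supplies an irreducible $g$ in the prescribed residue class modulo $f$ of a degree realising the required parity. This is where the numerically precise form of the norm condition, with its $c\sq E$ summand, pays off.

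For the pseudo-algebraically closed case I would replace Dirichlet-type density by the existence of rational points. The idea is to read the two conditions on $g$ off the hyperelliptic curve $C\colon Y^2=f(X)$ over $E$, which is geometrically integral because $f$ is square-free: a closed point of $C$ whose $X$-coordinate is a root of an irreducible $g$ automatically makes $f$ a square modulo $g$, since there $f=Y^2$. The square-class condition then translates into a constraint on the square classes of $\alpha$ at the Weierstrass points, that is, into lying in a prescribed coset for a covering of $C$ (a torsor under the $2$-torsion of its Jacobian). Because $E$ is PAC and the relevant cover is again geometrically integral, it carries an $E$-rational point, from which the desired $g$ is recovered; the summand $c\sq E$ and the parity of $\deg f$ are accounted for by the behaviour of $C$ at infinity.

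The main obstacle, in both cases, is the coupling between the two requirements on $g$: prescribing the square classes of $g$ modulo the $f_i$ while simultaneously forcing $f$ to be a square modulo $g$ is a reciprocity phenomenon, and the whole point is that the norm hypothesis $\N(\alpha)\in\sq E\cup c\sq E$ is precisely the global parity condition that makes the two compatible. Concretely, I expect the hardest steps to be, for finite fields, the bookkeeping of all reciprocity signs (including the leading-coefficient term that explains the role of $c$), and, for PAC fields, verifying that the cover of $C$ cutting out the square-class condition is geometrically integral so that the defining property of PAC fields applies; degenerate configurations of the factors $f_i$ and of the point or points of $C$ at infinity will need separate care.
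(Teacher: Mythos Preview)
Your finite-field argument is the paper's proof in different clothing: pick an irreducible $g$ in the prescribed residue class modulo $f$ and of prescribed degree parity via Kornblum, then use reciprocity to force $f\in\sq{E_g}$. The paper phrases the reciprocity step as ``$|\supp(\rho')|$ is even'' (\Cref{HR}) in the ramification-sequence formalism rather than tracking Legendre symbols, but the content is identical.

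For the PAC case you and the paper share the strategy (produce an absolutely irreducible $E$-curve whose rational points solve the problem), but the constructions differ. The paper \emph{decouples} the two constraints. It builds a ``transfer curve'' $\mc C\subset\mathbb A^{n}$ (\Cref{T:transfer-curve}) whose $E$-points correspond to $a\in E$ with $\vartheta-a\in\alpha\sq{E_f}$; this curve is cut out by $n-1$ simultaneously diagonalisable quadrics, and absolute irreducibility is proved by an elementary rank argument (\Cref{P:pencil-rank-geq3-absolirr}, \Cref{L:powerbasis-transfers}) showing no form in the pencil has rank $\le 2$. Separately, PAC gives an $E$-point on the hyperelliptic curve itself, hence a linear $q$ with $f\in\sq{E_q}$, and \Cref{P:linear-square-class-realisation} takes $g=q(X-a)$. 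Finally the imperfect case is reduced to the perfect one via \Cref{C:srf-Springer}.

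Your coupled construction---a cover $D_\alpha\to C$ encoding both constraints simultaneously---runs into a real obstacle. On the transfer curve one has the identity $f(a)=(-1)^{\deg f}\,c\,\N_{E_f/E}(\alpha)\cdot\N_{E_f/E}(h)^2$, so the fibre product with $C\colon Y^2=f(X)$ over the $a$-line is geometrically \emph{reducible}: it splits into two components according to the sign of $Y/\N(h)$. These components are defined over $E$ exactly when $(-1)^{\deg f}c\,\N(\alpha)\in\sq E$, and the norm hypothesis $\N(\alpha)\in\sq E\cup c\sq E$ does not guarantee this (take $\deg f$ even, $c\notin\sq E$, $\N(\alpha)\in\sq E$). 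In the bad case the fibre product is $E$-irreducible but not geometrically so, and PAC says nothing. The paper's decoupling sidesteps precisely this: the transfer curve alone is absolutely irreducible for every $\alpha$, and the auxiliary odd-degree factor $q$ absorbs the leftover sign at infinity. (A small side issue: a closed point of $C$ with $X$-coordinate of minimal polynomial $g$ yields $f\in\sq{E_g}$ only when the residue field equals $E_g$ rather than its quadratic extension; this also needs to be controlled.)
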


When $E$ is finite, we derive this result (\Cref{FFSR}) from Kornblum's theorem.
In \Cref{transfer}, we use quadratic form transfers to construct an absolutely irreducible curve $C$ over $E$ starting from a separable polynomial $f\in E[X]$ and a square-class in $E[X]/(f)$, such that the presence of an $E$-rational point on $C$ means that the square-class is realised by $X-a$ for some $a\in E$ (\Cref{T:transfer-curve}).
If this holds for all square-classes of $E[X]/(f)$ and the affine curve $Y^2=f(X)$ has a point of odd degree, then $f$ is square-reflexive (\Cref{P:linear-square-class-realisation}).
This leads to the statement for pseudo-algebraically closed fields (\Cref{C:PAC}).

Theorem~B could be derived via Theorem~A from the Hasse-Minkowski Theorem for global function fields, respectively from its analogue  due to I.~Efrat \cite{Efr01}
 for function fields of curves over pseudo-algebraically closed fields.
Our direct proofs of Theorem B now yield new proofs for the local-global principle for quadratic forms over $E(X)$ in the two cases (\Cref{C:finite-LGPI} and \Cref{C:PAC}).

In view of Theorem~A, the presence of a square-free polynomial that is not square-reflexive gives rise to a counterexample to the local-global principle for isotropy of $4$-dimensional forms over $E(X)$; see \Cref{L:main}.
Such counterexamples are characterised by a nontrivial unramified element in the Brauer group of a hyperelliptic curve over $E$ (given by the determinant of the form). 
Inspired by this connection,  we explore in \Cref{hyperelliptic} the relation of square-reflexivity of $f\in E[X]$ to properties of the hyperelliptic curve $Y^2=f(X)$ over $E$.

Square-reflexive polynomials may be relevant for the study of quadratic forms over $E(X)$ even when $E$ is not square-reflexive.
We call $E$ \emph{weakly square-reflexive} if in $E[X]$ every square-free polynomial divides a  square-reflexive polynomial.
By \Cref{T:sqf-div-sqref}, this has strong consequences for quadratic forms over $E(X)$:

\begin{thmC}
Assume that $E$ is weakly square-reflexive. Then the following hold:
\begin{enumerate}[$(i)$]
\item Every $5$-dimensional quadratic form over $E(X)$ is isotropic.
\item Every finite number of $3$-dimensional quadratic forms over $E(X)$ of trivial determinant represent a common element of $\mg{E(X)}$.
\end{enumerate}
\end{thmC}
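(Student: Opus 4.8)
The plan is to deduce both statements from a single \emph{common splitting field lemma}: given finitely many quaternion algebras over $E(X)$, weak square-reflexivity produces one hyperelliptic quadratic extension $E(X)(\sqrt F)$ that splits all of them. First I would prove this lemma. Let $Q_1,\dots,Q_n$ be quaternion algebras over $E(X)$. Since $\mathsf{cd}_2(E)\le 1$ gives $\brd E=0$, each $Q_i$ is determined by its ramification on $\pp^1_E$, supported on finitely many closed points; after a coordinate change I may assume none is the point at infinity. Let $f\in E[X]$ be the square-free product of the monic irreducible polynomials at which some $Q_i$ ramifies, and use weak square-reflexivity to pick a square-reflexive $F$ with $f\mid F$. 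Over $E(X)(\sqrt F)$, the function field of $Y^2=F(X)$, every $Q_i$ becomes unramified: at a root $\beta$ of $F$ the valuation of $F$ is odd, so the extension is ramified with $e=2$ there, and as residues of $2$-torsion Brauer classes are multiplied by the ramification index under base change, $\partial_\beta Q_i$ dies; at all other places $Q_i$ and $\sqrt F$ are already unramified. By \Cref{L:main}, equivalently by the $\k_2 E(X)$-criterion of \Cref{degreebound}, square-reflexivity of $F$ says precisely that there is no nontrivial unramified $2$-torsion class on $Y^2=F(X)$ coming from $E(X)$; hence each $Q_i$ splits over $E(X)(\sqrt F)$.

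For $(ii)$, let $q_1,\dots,q_n$ be the given $3$-dimensional forms of trivial determinant and let $Q_i$ be the quaternion algebra whose norm form has $q_i$ as a scalar multiple of its pure part. By the lemma, fix a square-reflexive $F$ with $E(X)(\sqrt F)$ splitting every $Q_i$, so each $q_i$ becomes isotropic over $E(X)(\sqrt F)$. If some $q_i$ is already isotropic over $E(X)$ it is universal and represents $-F$. Otherwise $q_i$ is anisotropic over $E(X)$ but isotropic over $E(X)(\sqrt F)$, so it contains a binary subform similar to $\la 1,-F\ra$ and thus $q_i\cong\la a,-aF,d\ra$ for some $a,d$; comparing determinants, the triviality of $\det q_i$ forces $d\equiv -F$ modulo squares, so again $q_i$ represents $-F$. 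Hence $-F\in\mg{E(X)}$ is the desired common value.

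For $(i)$, the lemma with $n=2$ shows that any two quaternion algebras over $E(X)$ share a common slot, i.e.\ $E(X)$ is \emph{linked}. By Merkurjev's theorem every class in $\brd{E(X)}$ is a sum of quaternion symbols, so linkage forces every such class to have index at most $2$; in particular no biquaternion division algebra exists and every $6$-dimensional Albert form over $E(X)$ is isotropic. On the other hand $\car E\ne 2$ and $\mathsf{cd}_2(E)\le 1$ give $\mathsf{cd}_2(E(X))\le 2$, so $H^3(E(X),\mu_2)=0$; by the Milnor conjecture the quotients $I^mE(X)/I^{m+1}E(X)$ vanish for $m\ge 3$, whence $I^3E(X)=\bigcap_{m\ge 1} I^mE(X)=0$ by the Arason--Pfister Hauptsatz. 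Now take a $5$-dimensional form $q$ over $E(X)$. Its Albert form $q\perp\la-\det q\ra$ is isotropic, so $q$ represents $\det q$ and $q\cong\la c\ra\perp\alpha\lla s,t\rra$ with the $4$-dimensional part a scalar multiple of a $2$-fold Pfister form; after scaling we may assume $q\cong\la c\ra\perp\lla s,t\rra$. Then $q$ is a subform of the $3$-fold Pfister form $\lla s,t,-c\rra$, hence a Pfister neighbour, and since $I^3E(X)=0$ this Pfister form is hyperbolic, so $q$ is isotropic.

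The main obstacle is the central step of the lemma: the passage from ``$Q_i$ is unramified on $Y^2=F(X)$'' to ``$Q_i$ splits over $E(X)(\sqrt F)$''. This rests entirely on \Cref{L:main} and the $\k_2 E(X)$-description in \Cref{degreebound}, and amounts to matching an unramified $2$-torsion class with a square class in $E[X]/(F)$ whose norm lies in $\sq E\cup c\sq E$; it is exactly here that the leading-coefficient alternative in the definition of square-reflexivity encodes the behaviour at the place at infinity together with the reciprocity law on $\pp^1_E$. Checking that enlarging $f$ to its square-reflexive multiple $F$ creates no new ramification for the $Q_i$, and that the place at infinity is absorbed either by the coordinate choice or by this alternative, is the bookkeeping that makes the lemma go through; once it is in place, parts $(i)$ and $(ii)$ follow formally as above.
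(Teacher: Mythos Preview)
Your common splitting field lemma is essentially the heart of the paper's proof, rephrased: having $F$ as a common slot for a family of symbols in $\k_2E(X)$ is the same, via \Cref{firstslot}, as having $E(X)(\sqrt F)$ split the corresponding quaternion algebras. Two things need repair, however.

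First, you invoke $\mathsf{cd}_2(E)\le 1$ without deriving it from weak square-reflexivity. For the lemma itself you only need $\k_2E=0$ (equivalently $\brd E=0$), and this \emph{does} follow from the hypothesis, by \Cref{L:sqr-deg4-k2triv}, which gives $u(E)\le 2$; you should cite that. The clean mechanism is then not \Cref{L:main} (which goes in the opposite direction: a \emph{non}-square-reflexive polynomial of minimal degree produces a counterexample to local-global) but \Cref{SRtoRC}: once $\partial$ is injective, square-reflexivity of $F$ says that every $\rho\in\mf R_2(E)$ with $\supp(\rho)\subseteq\supp(F)$ and $\rho_\infty=0$ equals $\partial(\{F,g\})$ for some $g$, so each $Q_i=\{F,g_i\}$ and splits over $E(X)(\sqrt F)$. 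Your interpretation via unramified classes on the hyperelliptic curve is equivalent, but the direct route through \Cref{SRtoRC} is shorter.

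Second, and this is the genuine gap, your argument for $(i)$ is circular. You use $\mathsf{cd}_2(E)\le 1$ to obtain $H^3(E(X),\mu_2)=0$ and hence $I^3E(X)=0$. But weak square-reflexivity gives only $u(E)\le 2$; the stronger statement that $u(E')\le 2$ for every finite extension $E'/E$ is obtained in the paper only \emph{after} establishing $u(E(X))\le 4$, via \Cref{L:rat-res-field-u}. So you are assuming the conclusion. The fix is immediate and is precisely the paper's route: your lemma already works for \emph{any} finite collection of quaternion algebras, not just two, so it yields strong linkage of $\k_2E(X)$ outright, and then \Cref{P:nr-sl-u} (nonreal plus any three symbols share a slot) gives $u(E(X))\le 4$ directly, with no appeal to cohomological dimension or the Milnor conjecture. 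Your part $(ii)$ is then correct as it stands, being a reformulation of strong linkage in the language of represented elements.
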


In fact, $(i)$ is a consequence of $(ii)$ in a more general context, by \Cref{P:nr-sl-u}.
Condition $(ii)$ can be reformulated by saying that any finite number of quaternion algebras over $E(X)$ (or symbols in $\k_2E(X)$) have a common slot.

In \Cref{R:wsrf-ex}, we sketch an argument, using methods from arithmetic geometry, showing that the field $\cc(\!(t)\!)$ is weakly square-reflexive.
On the other hand, it was shown recently that the field $E=\cc(t)$ does not satisfy $(ii)$:  in \cite{ChTi19}, four quaternion algebras over $E(X)=\cc(t,X)$ are constructed which have no common slot, that is, the pure parts of their norm forms do not represent a common nonzero element.
Hence, $\cc(t)$ is not weakly square-reflexive; see \Cref{E:wsrf-nex}.

For some fields $E$,
it is unknown whether Condition $(i)$ holds. Condition $(i)$ implies that the cohomological $2$-dimension of $E$ is $0$ or  $1$ (\Cref{L:rat-res-field-u}), but an example constructed in \cite{CTM04} shows that the converse does not hold.
It may therefore be interesting to see whether our methods, and Theorem~C in particular, give a new avenue to this problem.
\begin{qu*}
In which of the following cases is $E$  weakly square-reflexive?
\begin{enumerate}[$(1)$]
\item $E$ is quasi-finite,~i.e.~$E$~has~a~unique field extension of degree $d$ for any~$d\geq 1$.
\item $E$ is quadratically closed, i.e.~$\mg{E}=\sq{E}$.
\item Specifically, $E=\qq_{\mathsf{quad}}$, the quadratic closure of $\qq$.
\end{enumerate}
\end{qu*}

\subsection*{Acknowledgments}
We wish to express our gratitude to Jean-Louis Colliot-Th\'el\`ene,
 Nicolas Daans, Ido Efrat,  Arno Fehm, David Grimm and David Leep for inspiring discussions, questions and suggestions related to this research.
This article is based on Parul Gupta's PhD-thesis prepared under the supervision of Karim Johannes Becher (\emph{Universiteit Antwerpen}) and Arno Fehm (\emph{Technische Universit\"at Dresden}) in the framework of a joint PhD at \emph{Universiteit Antwerpen} and \emph{Universit\"at Konstanz}.

\section{Preliminaries}\label{Preliminaries}

Our general references for 
quadratic form theory over fields are \cite{EKM}, \cite{Lam05} and \cite{Scharlau}.
In particular, we refer to these sources for the  
definition of the dimension, the orthogonal sum $\perp$, the tensor product $\otimes$ and the relation of isometry $\simeq$ for quadratic forms.
By a \emph{quadratic form}, or simply a \emph{form}, we shall always mean a regular quadratic form.

Let $F$ always be a field of characteristic different from $2$.
For $a_1, \ldots, a_n \in \mg F$, the diagonal quadratic form $a_1X_1^2+\dots+a_nX_n^2$ is denoted by $\la a_1, \ldots,a_n\ra$. 
For $a,b\in\mg{F}$ we denote  by $\lla a,b\rra$ the quadratic form $\la 1,-a,-b,ab\ra$ over $F$. 
Forms of this shape are called $2$-fold Pfister forms. 

Let $\vf$ be a quadratic form over $F$. We denote  its dimension by $\dim(\vf)$. 
For $n=\dim(\vf)$, we can find $a_1,\dots,a_n\in\mg F$ such that $\vf\simeq\la a_1,\ldots,a_n\ra$, and then the \emph{determinant of $\vf$} can be given as the class  $(a_1\cdots a_n)\sq{F}$  in the group $\scg{F}$.
Any quadratic form isometric to $\lambda \vf$ for some $\lambda\in\mg{F}$ is said to be \emph{similar to $\vf$}.

The form $\varphi$ is called \emph{isotropic} if it represents $0$ over $F$ nontrivially, that is, if $\varphi(v)=0$ for some $v\in F^n\setminus\{0\}$, otherwise it is called \emph{anisotropic}. 
The $2$-dimensional quadratic form $\la 1,-1\ra$ is denoted by $\hh$ and called the \emph{hyperbolic plane}.
If $\varphi\simeq r\times \hh$ for some $r\in\nat$, where $r\times \hh$ denotes the $r$-fold orthogonal sum $\hh\perp\ldots\perp \hh$, then the form $\varphi$ is called \emph{hyperbolic}.

Let $\k_2F$ denote the abelian group generated by \emph{symbols}, which are elements of the form $\{a,b\}$ with $a,b\in\mg{F}$, subject to the defining relations that the pairing $$\{\cdot,\cdot\}: \mg{F}\times\mg{F}\lra \k_2 F$$ is bilinear, $\{a,1-a\}=0$ for all $a\in\mg{F}\setminus\{1\}$ and $\{a^2,b\}=0$ for all $a,b\in\mg{F}$. 
It follows from the defining relations that $\k_2F$ is $2$-torsion and that, for any $a,b\in\mg{F}$, we have $\{a,b\}=\{b,a\}$, and furthermore $\{a,b\}=\{a+b,-ab\}$ when $a+b\neq 0$.
Let $\k_1F$ denote the square class group $\scg{F}$ in additive notation, where we write $\{a\}\in \k_1F$ for the element given by the square class of $a\in\mg{F}$, thus having the relation $\{a\}+\{b\}=\{ab\}$ for all $a,b\in\mg{F}$.

Let $\xi \in \k_2F$. An element $a \in \mg F$ is called a  \emph{slot of $\xi$} if  $\xi = \{a, b\}$ for some  $b\in \mg F$. 
Given a subset $S\subseteq \k_2F$, an element $a\in\mg F$ is a \emph{common slot for $S$} if $a$ is a slot of every element of $S$. 
We say that $\k_2F$ is \emph{strongly linked} if every finite subset of $\k_2F$ has a common slot.

Here is a criterion for an element of $\mg{F}$ to be a slot of a given symbol in $\k_2F$.

\begin{prop}\label{firstslot}
For $a,b,c \in \mg{F}$ the following are equivalent:
\begin{enumerate}[$(i)$]
\item $\la c, -a, -b, ab\ra$ is isotropic over $F$.
\item $\lla a, b\rra$ is isotropic over $F(\sqrt{c})$.
\item $\lla a,b \rra \simeq \lla c,d \rra$ for some $d \in \mg F$.
\item $\{a, b\} = \{c, d\}$ for some $d \in \mg F$. 
\end{enumerate}
\end{prop}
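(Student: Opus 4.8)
The plan is to use condition $(iii)$ as a hub and prove $(i)\Leftrightarrow(iii)$, $(ii)\Leftrightarrow(iii)$ and $(iii)\Leftrightarrow(iv)$ separately, after disposing of the degenerate cases. First I would observe that if $\lla a,b\rra$ is isotropic over $F$, then it is hyperbolic and all four conditions hold: $(ii)$ is clear, while for $(iii)$ and $(iv)$ one takes $d=1$, noting $\lla c,1\rra\simeq\hh\perp\hh$ and $\{c,1\}=0$. Next, if $\lla a,b\rra$ is anisotropic but $c\in\sq F$, then $F(\sqrt c)=F$ and $\la c,-a,-b,ab\ra\simeq\lla a,b\rra$, so one checks directly that all four conditions fail simultaneously. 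Hence I may assume throughout that $\lla a,b\rra$ is anisotropic over $F$ and $c\notin\sq F$.

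To get $(i)\Leftrightarrow(iii)$ I would first rewrite $(i)$. Splitting off the entry $\la c\ra$, the form $\la c,-a,-b,ab\ra$ is isotropic over $F$ if and only if the complementary subform $\la -a,-b,ab\ra$ represents $-c$, that is, $(i)$ is equivalent to $-c\in\dset{F}{\la -a,-b,ab\ra}$. Now $\la -a,-b,ab\ra$ is exactly the pure part of the anisotropic $2$-fold Pfister form $\lla a,b\rra$. By Pfister's pure subform theorem, $-c$ is represented by this pure part precisely when $\lla a,b\rra\simeq\lla c,d\rra$ for some $d\in\mg F$. The reverse implication is immediate: from $\lla a,b\rra\simeq\lla c,d\rra$ one cancels the leading $\la 1\ra$ by Witt cancellation to obtain $\la -a,-b,ab\ra\simeq\la -c,-d,cd\ra$, which visibly represents $-c$.

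For $(ii)\Leftrightarrow(iii)$, the implication $(iii)\Rightarrow(ii)$ is a scalar-extension argument: over $F(\sqrt c)$ one has $\la 1,-c\ra\simeq\hh$, so $\lla c,d\rra\simeq\hh\otimes\la 1,-d\ra$ is hyperbolic, hence isotropic. For the converse I would use that a $2$-fold Pfister form which is isotropic, and therefore hyperbolic, over the quadratic extension $F(\sqrt c)$ must be divisible by $\lla c\rra$, i.e.\ isometric to $\lla c,d\rra$ for some $d$; this is the standard description of the Witt kernel of $F(\sqrt c)/F$ specialised to Pfister forms. Finally, $(iii)\Leftrightarrow(iv)$ is the classical dictionary between $2$-fold Pfister forms, quaternion algebras and symbols: $\lla a,b\rra\simeq\lla c,d\rra$ if and only if $\{a,b\}=\{c,d\}$ in $\k_2F$, the passage from isometry of forms to equality of symbols being supplied by chain equivalence of isometric Pfister forms, which preserves the symbol.

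The hard part will be the two structural inputs, namely Pfister's pure subform theorem for $(i)\Rightarrow(iii)$ and the hyperbolicity criterion over $F(\sqrt c)$ for $(ii)\Rightarrow(iii)$, together with the (slightly delicate) reduction to the anisotropic case; everything else is routine scalar-extension and cancellation. As an alternative route, one could read off the whole proposition from the theory of quaternion algebras, since all four conditions are reformulations of the single statement that $F(\sqrt c)$ splits $(a,b)_F$; I prefer the quadratic-form argument above because it stays within the language of Pfister forms and $\k_2F$ used elsewhere in the paper.
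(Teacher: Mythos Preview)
Your proof is correct. The paper's own proof simply cites three references --- Scharlau for $(i)\Leftrightarrow(ii)$, and EKM for $(ii)\Leftrightarrow(iii)$ and $(iii)\Leftrightarrow(iv)$ --- so any self-contained argument is in some sense ``different''. Structurally, the paper chains the equivalences $(i)\Leftrightarrow(ii)\Leftrightarrow(iii)\Leftrightarrow(iv)$, whereas you use $(iii)$ as a hub and appeal to the pure subform theorem to go directly from $(i)$ to $(iii)$, bypassing $(ii)$. Your handling of $(ii)\Leftrightarrow(iii)$ via the Witt kernel of $F(\sqrt{c})/F$ and of $(iii)\Leftrightarrow(iv)$ via chain equivalence matches the content of the cited EKM lemmas. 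The explicit treatment of the degenerate cases (hyperbolic $\lla a,b\rra$, or $c\in\sq F$) is a nice touch that the paper leaves implicit in the references; the only small omission is that you do not verify $(i)$ explicitly when $\lla a,b\rra$ is hyperbolic, but this is immediate since $\la -a,-b,ab\ra$ then contains a hyperbolic plane. Your alternative quaternion-algebra remark at the end is also a valid route and indeed underlies the EKM references.
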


\begin{proof}
See \cite[Lemma~2.14.2]{Scharlau} for the equivalence of $(i)$ and $(ii)$.
The equivalence of $(ii)$ and $(iii)$ is a special case of \cite[Lemma~6.11]{EKM}.
See  \cite[Lemma~5.2]{EKM} for the equivalence of $(iii)$ and $(iv)$.
\end{proof}


\begin{cor}\label{P:symbol-zero}
For $a,b\in\mg{F}$
the following are equivalent:
\begin{enumerate}[$(i)$]
\item $\la -a, -b, ab\ra$ is isotropic over $F$.
\item $\lla a,b\rra$ is isotropic over $F$.
\item $\lla a,b\rra$ is hyperbolic over $F$.
\item $\{a, b\} = 0$ in $\k_2F$. 
\end{enumerate}
\end{cor}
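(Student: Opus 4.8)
The plan is to deduce the whole chain of equivalences from \Cref{firstslot} by specialising its parameter to $c=1$, supplemented by the standard fact that an isotropic Pfister form is hyperbolic and by Witt cancellation.

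First I would settle the equivalences $(ii)\Leftrightarrow(iii)\Leftrightarrow(iv)$. Setting $c=1$ in \Cref{firstslot}, its condition $(i)$ becomes that $\la 1,-a,-b,ab\ra=\lla a,b\rra$ is isotropic over $F$, which is exactly $(ii)$ here; note that $F(\sqrt{1})=F$, so condition $(ii)$ of the proposition says the same thing. Condition $(iii)$ of the proposition becomes $\lla a,b\rra\simeq\lla 1,d\rra$ for some $d\in\mg F$; since $\lla 1,d\rra=\la 1,-1,-d,d\ra$ is hyperbolic for every $d$, and conversely a hyperbolic $2$-fold Pfister form is isometric to $\lla 1,1\rra$, this is equivalent to $(iii)$ here. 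Finally, condition $(iv)$ of the proposition becomes $\{a,b\}=\{1,d\}$ for some $d$; as $\{1,d\}=\{1^2,d\}=0$ for every $d$ by the defining relations of $\k_2F$, this is equivalent to $\{a,b\}=0$, which is $(iv)$ here. Thus the existential quantifiers on $d$ collapse and we obtain $(ii)\Leftrightarrow(iii)\Leftrightarrow(iv)$.

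It then remains to relate the $3$-dimensional form of $(i)$ with the $4$-dimensional form of $(ii)$. The implication $(i)\Rightarrow(ii)$ is immediate, since $\la -a,-b,ab\ra$ is a subform of $\lla a,b\rra=\la 1\ra\perp\la -a,-b,ab\ra$, and any form containing an isotropic subform is isotropic. For the converse $(ii)\Rightarrow(i)$, I would use that, granting $(ii)$, the equivalence with $(iii)$ already shows $\lla a,b\rra$ is hyperbolic, so $\lla a,b\rra\simeq\la 1\ra\perp\la -1,1,-1\ra$. Comparing this with $\lla a,b\rra=\la 1\ra\perp\la -a,-b,ab\ra$ and cancelling $\la 1\ra$ by Witt's cancellation theorem gives $\la -a,-b,ab\ra\simeq\la -1,1,-1\ra$, which is isotropic as it contains the hyperbolic plane $\hh$; this yields $(i)$ and closes the cycle.

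The only genuine inputs beyond \Cref{firstslot} are that an isotropic $2$-fold Pfister form is hyperbolic (needed to make $(iii)$ available in the last step) and Witt cancellation, both of which are entirely standard, so I do not expect a serious obstacle. The one point to handle carefully is verifying that the specialisation $c=1$ is legitimate and that the existentially quantified $d$ in conditions $(iii)$ and $(iv)$ of the proposition reduces correctly to the unconditional statements ``hyperbolic'' and ``$\{a,b\}=0$''; this is exactly where one uses that $\lla 1,d\rra$ is hyperbolic and $\{1,d\}=0$ for every $d$.
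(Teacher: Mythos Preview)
Your proof is correct and follows essentially the same approach as the paper: both obtain the equivalence of $(ii)$, $(iii)$ and $(iv)$ by specialising \Cref{firstslot} to $c=1$, exactly as you do. The only difference is in linking $(i)$ to the rest: the paper simply cites \cite[Corollary~2.11.10]{Scharlau} for $(i)\Leftrightarrow(iii)$, whereas you supply a short self-contained argument for $(i)\Leftrightarrow(ii)$ via the subform inclusion and Witt cancellation, which is a perfectly valid and arguably more transparent alternative.
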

\begin{proof}
For the equivalence of $(i)$ and $(iii)$, see~\cite[Corollary~2.11.10]{Scharlau}.
The equivalence of $(ii)$, $(iii)$ and $(iv)$ follows from \Cref{firstslot} by taking $c=1$.
\end{proof}

The \emph{$u$-invariant} of $F$ is defined as 
$$u(F)= \sup\,\{\dim (\varphi)\mid \varphi \text{ anisotropic quadratic form over } F\}\,\,\in \,\,\nat\cup\{\infty\}\,.$$

\begin{cor}\label{u2}
We have  $u(F) \leq 2$ if and only if $ \k_2F =0$. 
\end{cor}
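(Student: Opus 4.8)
The plan is to prove the two implications separately, using \Cref{P:symbol-zero} as the dictionary between vanishing of a symbol $\{a,b\}$ in $\k_2F$ and isotropy of the ternary form $\la -a,-b,ab\ra$.

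For the implication that $u(F)\leq 2$ forces $\k_2F=0$, I would argue as follows. Since $\k_2F$ is generated by symbols, it suffices to show $\{a,b\}=0$ for all $a,b\in\mg{F}$. Fix such $a,b$. The form $\la -a,-b,ab\ra$ has dimension $3$, so the hypothesis $u(F)\leq 2$ forces it to be isotropic, and then the equivalence of $(i)$ and $(iv)$ in \Cref{P:symbol-zero} gives $\{a,b\}=0$. This direction is essentially immediate.

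For the converse, assuming $\k_2F=0$, I want to show that every form of dimension at least $3$ is isotropic. As any such form has a ternary subform, and isotropy of a subform propagates to the whole form, it is enough to treat ternary forms. So let $\la a_1,a_2,a_3\ra$ be arbitrary. The crucial step is a normalisation by scaling: isotropy is invariant under multiplying a form by a nonzero scalar, and multiplying by the determinant $d=a_1a_2a_3$ produces $\la da_1,da_2,da_3\ra$, whose determinant $d^4$ is a square. A ternary form of square determinant is isometric to one of the shape $\la -a,-b,ab\ra$ (take $-a,-b$ as the first two entries; the square-determinant condition then forces the third entry to equal $ab$ modulo $\sq{F}$). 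Since $\k_2F=0$ gives $\{a,b\}=0$, \Cref{P:symbol-zero} makes this form isotropic, whence the original ternary form is isotropic too, and $u(F)\leq 2$ follows.

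The one step I expect to require care is the reduction to square determinant: \Cref{P:symbol-zero} only controls forms of the special shape $\la -a,-b,ab\ra$, whose determinant is automatically a square, so an arbitrary ternary form must first be brought into this normal form. Here one uses $\det(\lambda\varphi)=\lambda^{\dim\varphi}\det\varphi$: for a ternary form, scaling by $\lambda$ multiplies the square class of the determinant by $\lambda$, so scaling by $d$ itself yields a square determinant without disturbing isotropy. Beyond this, the argument is a direct application of \Cref{P:symbol-zero} together with the fact that symbols generate $\k_2F$.
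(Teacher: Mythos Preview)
Your proof is correct and follows essentially the same approach as the paper: the paper's proof observes that $u(F)\leq 2$ is equivalent to every $3$-dimensional form being isotropic, notes that every $3$-dimensional form is similar to some $\la -a,-b,ab\ra$, and then invokes \Cref{P:symbol-zero}. Your scaling-by-the-determinant step is precisely the verification of this similarity claim, and your remark that symbols generate $\k_2F$ makes explicit what the paper leaves implicit.
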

\begin{proof}
Note that $u(F)\leq 2$ if and only if every $3$-dimensional form over $F$ is isotropic.
As every $3$-dimensional  form over $F$ is similar to $\la -a,-b,ab\ra$ for some $a,b\in\mg{F}$, the statement thus follows from \Cref{P:symbol-zero}.
\end{proof}

The field $F$ is \emph{real} if it admits a field ordering, and \emph{nonreal} otherwise.
By the Artin-Schreier Theorem (see \cite[Chap.~VIII, Theorem 1.10]{Lam05}), $F$ is real if and only if $-1$ is not a sum of squares in $F$.
Note that, if $F$ is real, then $u(F)=\infty$ because the form $\la 1,\dots,1\ra$ is anisotropic in any dimension. 

\begin{prop}\label{P:nr-sl-u}
If $F$ is nonreal and any three symbols in $\k_2F$ have a common slot, then $u(F)\leq 4$.
\end{prop}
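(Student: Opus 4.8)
The plan is to derive from the hypothesis the two standard consequences of linkedness and then run a dimension count on a $6$-dimensional companion form. I would first note that the hypothesis on three symbols already gives its pairwise version: for $\xi,\eta\in\k_2F$, applying the hypothesis to the triple $\xi,\eta,\eta$ yields a common slot of $\xi$ and $\eta$. Hence any two symbols in $\k_2F$ have a common slot, which by \Cref{firstslot} says that $F$ is \emph{linked} (so in fact the pairwise hypothesis would already suffice). From linkedness I would extract two facts. First, every $6$-dimensional form of trivial determinant is isotropic: such a form is the Albert form of a biquaternion algebra $Q_1\otimes Q_2$, and a common slot of $Q_1$ and $Q_2$ shows that this algebra is not division, so its Albert form is isotropic. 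Second, using also that $F$ is nonreal, every $2$-fold Pfister form is \emph{universal}, i.e.\ $D\lla a,b\rra=\mg F$ for all $a,b\in\mg F$; equivalently every $3$-fold Pfister form is hyperbolic, i.e.\ $I^3F=0$.

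Granting these, I would argue as follows. Let $\vf$ be a $5$-dimensional form; it suffices to show $\vf$ is isotropic, for then every form of dimension at least $5$ is isotropic and $u(F)\leq 4$. Set $d=\det\vf$ and consider $\vf\perp\la d\ra$, which has dimension $6$ and trivial determinant, hence is isotropic by the first fact. Its anisotropic part $\rho$ thus has dimension at most $4$, lies in $I^2F$, and is anisotropic; the Arason--Pfister Hauptsatz rules out dimension $2$, so $\dim\rho\in\{0,4\}$. In the Witt ring $\vf$ equals $\rho\perp\la -d\ra$. If $\dim\rho=0$, then $\vf$ is Witt equivalent to a $1$-dimensional form and hence isotropic. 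If $\dim\rho=4$, then $\vf$ and $\rho\perp\la -d\ra$ are both $5$-dimensional and Witt equivalent, hence isometric; moreover a $4$-dimensional form of trivial determinant is similar to a $2$-fold Pfister form, so $\rho$ is universal by the second fact and represents $d$, whence $\vf\cong\rho\perp\la -d\ra$ is isotropic. In either case $\vf$ is isotropic.

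The main obstacle is the second fact, that nonreality together with linkedness forces $I^3F=0$. The route I would take uses that, as $F$ is nonreal, $-1$ is a sum of squares and so $WF$ is torsion; in particular $I^3F$ is torsion. On the other hand the Elman--Lam theory of linked fields shows that linkedness makes $I^3F$ torsion-free, and combining the two gives $I^3F=0$, which is precisely the universality of all $2$-fold Pfister forms. Establishing ``linked $\Rightarrow I^3F$ torsion-free'' is the technical heart of the argument; alternatively one can try to prove directly that each $3$-fold Pfister form $\lla a,b,c\rra$ is hyperbolic by using a common slot to rewrite the quaternionic pieces with a shared slot and then using that $WF$ is torsion to cancel the residual term. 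The first fact, isotropy of Albert forms, is a clean consequence of Albert's theorem on biquaternion algebras and may simply be cited.
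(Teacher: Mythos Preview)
Your argument has a genuine gap: you immediately discard the triple-linkage hypothesis in favour of its pairwise consequence (that $F$ is linked) and then try to conclude $u(F)\leq 4$ from ``nonreal and linked'' alone. That implication is false. Take $F=\cc(\!(X)\!)(\!(Y)\!)(\!(Z)\!)$. This field is nonreal (indeed $-1\in\sq{F}$), and $u(F)=8$ by three applications of \Cref{L:Springer}. It is also linked: $\k_2F$ has exactly eight elements, generated by $\{X,Y\},\{X,Z\},\{Y,Z\}$, and each nonzero element is a single symbol --- for the only case needing a check, $\{X,YZ\}+\{Y,Z\}$, the associated Albert form contains the subform $\la YZ,YZ\ra$, which is hyperbolic because $-1\in\sq{F}$. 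Yet the three generators $\{X,Y\},\{X,Z\},\{Y,Z\}$ have no common slot: via \Cref{firstslot} and \Cref{Springer} the nontrivial square-classes that are slots of $\{X,Y\}$ are exactly $X,Y,XY$, and symmetrically for the other two symbols, and these three sets have empty common intersection.

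The specific step that fails is your claim ``linked $\Rightarrow I^3F$ torsion-free'': over the field above the Witt ring is entirely torsion, while the $3$-fold Pfister form $\lla X,Y\rra\otimes\la 1,-Z\ra$ is anisotropic, so $I^3F$ is nonzero torsion. The Elman--Lam theory for linked nonreal fields gives only $u(F)\in\{1,2,4,8\}$ and does not exclude $8$. The hypothesis on \emph{three} symbols is therefore essential rather than a convenience; the paper does not reprove this but obtains the proposition directly from \cite[Corollary~5.7]{Btriple}, where a common slot of three symbols naturally attached to a given $5$-dimensional form is precisely what forces its isotropy.
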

\begin{proof}
See \cite[Corollary 5.7]{Btriple}. 
\end{proof}

For a finite field extension $F'/F$, the norm map $\N_{F'/F}:F'\rightarrow F$ induces a group homomorphism
$\N_{F'/F}:\k_1F' \rightarrow \k_1 F$ (we use for both maps the same notation).

\begin{lem}\label{L:k1norm-surj}
Assume that $u(F')\leq 2$ holds for every finite field extension $F'/F$.
Let $L/F$ be a finite field extension.
Then, for any $c\in \mg{F}$, there exists $k\in\nat$ and $x\in L$ such that $\N_{L/F}(x)=c^{2k+1}$.
In particular, $\N_{L/F}:\k_1L\to \k_1F$ is surjective.
\end{lem}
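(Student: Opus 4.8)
The plan is to deduce the statement from two elementary building blocks and to glue them together by the transitivity of norms, i.e.\ $\N_{L/F}=\N_{M/F}\circ\N_{L/M}$ for a tower $F\subseteq M\subseteq L$. The first block is the \emph{odd degree} case: for any finite extension and any $c\in\mg F$ one has $\N_{L/F}(c)=c^{[L:F]}$, so if $[L:F]$ is odd then $x=c$ already realises $c^{2k+1}$ with $2k+1=[L:F]$. The second block is the \emph{quadratic} case, which is where the hypothesis enters. I note at the outset that the ``in particular'' assertion is immediate from the main statement, since $c^{2k+1}=c\cdot(c^k)^2$ lies in the same square class as $c$, whence $\{c\}=\{\N_{L/F}(x)\}=\N_{L/F}(\{x\})$ in $\k_1F$ for every $c\in\mg F$.

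For the quadratic block, let $M/F$ be any finite extension and $K=M(\sqrt{d})$ with $d\in\mg M\setminus\sq M$. Given $c\in\mg M$, I would use that $u(M)\leq 2$ (which holds because $M/F$ is finite, so the hypothesis applies to $M$; equivalently $\k_2M=0$ by \Cref{u2}) to conclude that the $3$-dimensional form $\la 1,-d,-c\ra$ is isotropic over $M$. A nontrivial zero $(x,y,z)$ cannot have $z=0$, as that would force $d\in\sq M$; hence $c=(x/z)^2-d(y/z)^2$ is a value of the norm form of $K/M$, so $\N_{K/M}\colon\mg K\to\mg M$ is surjective. The crucial point is that this argument applies verbatim over every finite subextension of a tower below $F$, since $u\leq2$ is inherited by all finite extensions of $F$.

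To assemble the general case I would first dispose of inseparability. Let $F_s$ be the separable closure of $F$ in $L$; then $L/F_s$ is purely inseparable, hence of odd degree since $\car F\neq2$, and $\N_{L/F_s}(w)=w^{[L:F_s]}$ for $w\in F_s$. Thus it suffices to realise an odd power of $c$ as $\N_{F_s/F}(w)$ for some $w\in\mg{F_s}$, for then $x=w\in\mg L$ gives $\N_{L/F}(x)=\N_{F_s/F}(w)^{[L:F_s]}$, again an odd power. To settle the separable extension $F_s/F$, I would pass to its Galois closure $\wt L/F$ with group $G=\Gal(\wt L/F)$, fix a $2$-Sylow subgroup $P\leq G$, and set $E=\wt L^{P}$, so that $[E:F]=[G:P]$ is odd. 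As $P$ is a finite $2$-group it admits a chain of subgroups of successive index $2$, which under the Galois correspondence yields a tower of quadratic extensions from $E$ up to $\wt L$; by the quadratic block and transitivity, $\N_{\wt L/E}\colon\mg{\wt L}\to\mg E$ is surjective. Choosing $y\in\wt L$ with $\N_{\wt L/E}(y)=c$, transitivity gives $\N_{\wt L/F}(y)=\N_{E/F}(c)=c^{[E:F]}$, and pushing down through $F_s$ the element $w=\N_{\wt L/F_s}(y)\in\mg{F_s}$ satisfies $\N_{F_s/F}(w)=c^{[E:F]}$, the desired odd power.

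The main obstacle is that a naive induction on $[L:F]$ through intermediate fields breaks down: an even-degree extension need not contain any quadratic subextension (for instance a quartic field with Galois group $A_4$ has no proper intermediate field at all), so one cannot peel quadratic steps off $L$ one at a time. The Sylow--fixed-field comparison circumvents this by transporting the $2$-primary work into the Galois extension $\wt L/E$, where a tower of quadratic steps \emph{does} exist, and by exploiting that $E/F$ has odd degree so that $\N_{E/F}$ is already surjective on square classes. The remaining care is bookkeeping: tracking that every field in sight is a finite extension of $F$ (so the hypothesis applies at each quadratic step), and that all the exponents of $c$ produced along the towers stay odd, so that the conclusion $\N_{L/F}(x)=c^{2k+1}$ holds exactly.
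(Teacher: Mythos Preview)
Your proof is correct and follows essentially the same strategy as the paper: reduce to quadratic steps via a Sylow-$2$ fixed field in a normal (Galois) closure, use the hypothesis $u\leq 2$ to get norm surjectivity along each quadratic step, and exploit that the complementary extension has odd degree so that $\N(c)=c^{\mathrm{odd}}$. The only cosmetic difference is that you split off the purely inseparable part of $L/F$ explicitly before passing to the Galois closure of the separable part, whereas the paper goes directly to the normal closure $M$ of $L/F$ and absorbs the inseparable contribution into the odd-degree subfield $K$ (legitimate since $\car(F)\neq 2$); your treatment makes this point cleaner.
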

\begin{proof}
For any finite field extension $F'/F$, the hypothesis implies that the norm map $\N_{F''/F'}:F''\to F'$ is surjective for any quadratic field extension $F''/F'$.
In view of the composition rule for computing norms in successive finite extensions, the same conclusion holds more generally when $F''/F'$ is a finite $2$-extension, that is, if there exist $r\in\nat$ and a sequence of intermediate fields $(F_i)_{i=0}^r$ where $F_i/F_{i-1}$ is a quadratic extension for $1\leq i\leq r$ and such that $F_0=F'$ and $F_r=F''$.

Let $M/F$ be the normal closure of $L/E$. Since $\car(F)\neq 2$, it follows by Galois theory that there is a subfield $K$ of $M$ containing $F$ such that $M/K$ is a finite $2$-extension  and $[K:F]=2k+1$ for some $k\in\nat$. 
By the previous observation, $\N_{M/K}:L\to K$ is surjective. 
Hence, for $c\in\mg{F}$, there exists $x\in L$ with $\N_{M/K}(x)=c$, and letting $y=\N_{M/L}(x)$,
we conclude that 
$$\N_{L/F}(y)=\N_{M/F}(x)=\N_{K/F}(c)=c^{2k+1}\,,$$
and in particular $\{c\}=\{c^{2k+1}\}=\{\N_{L/F}(y)\}=\N_{L/F}(\{y\})$ in $\k_1F$.
\end{proof}

\begin{cor}\label{P:2quasifinite-k1normsurj}
Assume that $|\scg{F'}|=2$ for every finite field extension $F'/F$.
Then $\N_{F'/F}:\k_1F'\to \k_1F$ is an isomorphism for every finite field extension $F'/F$.
\end{cor}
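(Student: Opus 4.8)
The plan is to reduce the statement to \Cref{L:k1norm-surj}. Its hypothesis requires $u(F'')\leq 2$ for every finite extension $F''/F$, and by \Cref{u2} this is equivalent to $\k_2F''=0$. So the first and main task is to deduce, from the assumption that $|\scg{F''}|=2$ for every finite extension $F''/F$, that $\k_2F''=0$ for every such $F''$. Granting this, \Cref{L:k1norm-surj} yields that $\N_{F'/F}\colon\k_1F'\to\k_1F$ is surjective for every finite extension $F'/F$; since $|\k_1F'|=|\scg{F'}|=2=|\scg F|=|\k_1F|$, a surjective homomorphism between these two groups of order $2$ is an isomorphism, which is the desired conclusion.

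To show $\k_2F''=0$, I would fix a finite extension $F''/F$ and let $t\in\mg{F''}$ represent the unique nontrivial square class. Using bimultiplicativity together with $\{x^2,y\}=0$, every symbol reduces to $\{1,\cdot\}=0$ or to $\{t,t\}$, so $\k_2F''$ is generated by $\{t,t\}$ and it suffices to prove $\{t,t\}=0$. From $\{t,-t\}=0$ and $2$-torsion one gets $\{t,t\}=\{t,-1\}$. If $-1\in\sq{F''}$, then $\{t,t\}=\{t,-1\}=0$ immediately. The remaining case, where $-1$ is a nonsquare and hence $t=-1$, is the crux: here $\{t,t\}=\{-1,-1\}$, which by \Cref{P:symbol-zero} vanishes if and only if $-1$ is a sum of two squares in $F''$.

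I expect this last point to be the main obstacle, and I would settle it in two moves. First I would rule out that $F''$ is real: a real field with $|\scg{F''}|=2$ is Euclidean, since its positive cone must coincide with $\sq{F''}$, and for a Euclidean field the quadratic extension $F''(\sqrt{-1})$ is quadratically closed, contradicting $|\scg{F''(\sqrt{-1})}|=2$, which holds by hypothesis as $F''(\sqrt{-1})/F$ is finite. Thus $F''$ is nonreal and its level $s=s(F'')$ is a finite power of $2$ with $s\geq 2$. Second I would show $s=2$: the Pfister form $\la 1,1\ra$ is round, so its nonzero represented values form a subgroup of $\scg{F''}$; if $-1$ were not a sum of two squares, this subgroup would be trivial, meaning every nonzero sum of two squares is a square, and then pairing off the terms of a representation $-1=a_1^2+\dots+a_s^2$ would exhibit $-1$ as a sum of only $s/2$ squares, contradicting the minimality of $s$. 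Hence $s=2$, so $-1$ is a sum of two squares and $\{t,t\}=\{-1,-1\}=0$, completing the verification that $\k_2F''=0$ and thereby the proof.
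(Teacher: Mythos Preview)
Your argument is correct and reaches the same reduction to \Cref{L:k1norm-surj} and the same finishing order-$2$ argument as the paper. The genuine difference lies in how you establish $u(F'')\leq 2$ (equivalently $\k_2F''=0$) for every finite extension $F''/F$.

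The paper argues via norms from quadratic extensions: for $a\in\mg{F''}\setminus\sq{F''}$, the hypothesis $|\scg{F''}|=2=|\scg{F''(\sqrt{a})}|$ together with \cite[Chap.~VII, Theorem~3.8]{Lam05} forces the norm $\N_{F''(\sqrt{a})/F''}$ to be surjective, whence every binary form $\la 1,-a\ra$ is universal and $u(F'')\leq 2$. Your route instead computes $\k_2F''$ directly: with only two square classes, $\k_2F''$ is cyclic generated by $\{t,t\}=\{t,-1\}$, and the remaining case $t=-1$ is dispatched by a level argument (ruling out the Euclidean case via $|\scg{F''(\sqrt{-1})}|=2$, then using roundness of $\la 1,1\ra$ to halve the level). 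Your approach is more self-contained, avoiding the external reference to the norm exact sequence, at the cost of a longer case analysis; the paper's approach is shorter but leans on a structural result from \cite{Lam05}. Both are perfectly valid.
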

\begin{proof}
For any $a\in\mg{F}\setminus\sq{F}$, since $|\scg{F(\sqrt{a})}|=2=|\scg{F}|$, it follows by \cite[Chap.~VII, Theorem 3.8]{Lam05} that $\N_{F(\sqrt{a})/F}:F(\sqrt{a})\to F$ is surjective. 
This readily implies that $u(F)\leq 2$.
In view of the hypothesis, the same argument applies to any finite field extension $F'/F$, whereby also $u(F')\leq  2$.
Now \Cref{L:k1norm-surj} shows that, for any finite field extension $F'/F$, the homomorphism $\N_{F'/F}:\k_1F'\to\k_1F$ is surjective, and hence it is an isomorphism, because $|\k_1F'|=|\scg{F'}|=2=|\scg{F}|=|\k_1F|$.
\end{proof}

\begin{ex}
The hypothesis of \Cref{P:2quasifinite-k1normsurj} obviously holds when $F$ is a finite field of odd cardinality.
In this case, for a finite field extension $F'/F$, the norm map $\N_{F'/F}:F'\to F$ is given by $x\mapsto x^N$ where
 $N$ denotes the index of the subgroup $\mg{F}$ in the cyclic group $\mg{F'}$, which also shows that $\N_{F'/F}$ is surjective.
 \end{ex}

Let $v$ be a $\zz$-valuation on $F$.
We denote by $\mc{O}_v$ the associated valuation ring of $F$, by $\mfm_v$ its maximal ideal and by $\kappa_v$ its residue field $\mc{O}_v/\mfm_v$. 
For $a\in \mc{O}_v$ we write $\ovl{a}$ for the residue class $a+\mfm_v$ in $\kappa_v$.
The valuation $v$ is called \emph{nondyadic} if $v(2)=0$ and \emph{dyadic} otherwise.
We call $v$ \emph{complete} if its valuation ring $\mc{O}_v$ is complete with respect to the $\mfm_v$-adic topology. The completion of $F$ with respect to $v$, denoted by $F_v$, is the fraction field of the $\mfm_v$-adic completion of the valuation ring $\mc{O}_v$, and it becomes naturally a field extension of $F$ such that $v$ extends uniquely to a $\zz$-valuation on $F_v$, which has the same residue field $\kappa_v$.

By \cite[Lemma~2.1]{Milnor}, for $n\geq 2$, there is a unique homomorphism 
$$\partial_v:\k_2F\lra \k_{1}\kappa_v$$ such that $\partial_v(\{f,g\})=v(f)\cdot\{ \ovl{g}\}\,\,\,\mbox{ in }\,\k_1\kappa_v$ holds for all $f\in\mg{F}$ and $g\in\mg{\mc{O}}_v$.
For $f,g\in\mg{F}$ we obtain that $f^{-v(g)}g^{v(f)}\in\mg{\mc{O}}_v$ and 
\begin{eqnarray*}
\partial_v(\{f,g\}) & = & \{(-1)^{v(f)v(g)}\ovl{f^{-v(g)}g^{v(f)}}\}\,\,\,\mbox{ in }\, \k_1\kappa_v\,.
\end{eqnarray*}

In the context of local-global principles for quadratic forms with respect to valuation completions, we use Springer's theorem to check the local conditions.

\begin{prop}[Springer]\label{Springer}
Let $v$ be a complete nondyadic $\zz$-valuation on $F$. 
\begin{enumerate}[$(a)$]
\item Given $r\in\nat$ and $a_1,\dots,a_r\in\mg{\mc{O}}_v$, the form $\la a_1,\dots,a_r\ra$ over $F$ is isotropic if and only if the form $\la \ovl{a}_1,\dots,\ovl{a}_r\ra$ over $\kappa_v$  is isotropic.
\item 
For $r,s\in\nat$, $a_1,\dots,a_r,c_1,\dots,c_s\in\mg{\mc{O}}_v$ and $t\in\mg{F}$ such that $v(t)$ is odd,
the form $\la a_1,\dots,a_r\ra\perp t\la c_1,\dots,c_s\ra$ over $F$ is isotropic if and only if one of the forms $\la \ovl{a}_1,\dots,\ovl{a}_r\ra$ and $\la \ovl{c}_1,\dots,\ovl{c}_s\ra$ over $\kappa_v$
 is isotropic.
\end{enumerate}

\end{prop}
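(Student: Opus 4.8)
The plan is to establish part $(a)$ first and then to deduce part $(b)$ from it by a comparison of values. Throughout I fix a uniformizer $\pi\in\mg{F}$ with $v(\pi)=1$, which exists because $v$ is a $\zz$-valuation, and I exploit that $v$ is nondyadic, so that $2\in\mg{\mc{O}}_v$. Both directions of $(a)$ hinge on scaling a putative zero so that its coordinates lie in $\mc{O}_v$ with at least one of them a unit, which is possible since the forms are homogeneous and $v$ is discrete.

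For part $(a)$, the forward implication is the elementary one: given a nontrivial zero $(x_1,\dots,x_r)$ of $\la a_1,\dots,a_r\ra$ over $F$, I rescale it by a power of $\pi$ so that all $x_i\in\mc{O}_v$ and some $x_i\in\mg{\mc{O}}_v$, and reducing $\sum a_ix_i^2=0$ modulo $\mfm_v$ yields a nontrivial zero of $\la\ovl{a}_1,\dots,\ovl{a}_r\ra$ over $\kappa_v$, since the $\ovl{a}_i$ are units and some $\ovl{x}_i\neq 0$. The converse is the \emph{main obstacle} and is where completeness is used. Starting from a nontrivial zero $(\ovl{x}_1,\dots,\ovl{x}_r)$ of the residue form, I lift the $\ovl{x}_i$ to $x_i\in\mc{O}_v$ and, after renumbering, assume $\ovl{x}_1\neq 0$, so that $x_1\in\mg{\mc{O}}_v$. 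Viewing $P(T)=a_1T^2+\sum_{i=2}^r a_ix_i^2$ as a polynomial over $\mc{O}_v$, I have $v(P(x_1))\geq 1$ (the residue form vanishes at the lifted point) while $v(P'(x_1))=v(2a_1x_1)=0$, the last equality using that $v$ is nondyadic and that $a_1,x_1$ are units. Hensel's lemma, applicable because $\mc{O}_v$ is complete, then produces $\wt{x}_1\in\mc{O}_v$ with $P(\wt{x}_1)=0$ and $\wt{x}_1\equiv x_1\pmod{\mfm_v}$; in particular $\wt{x}_1\in\mg{\mc{O}}_v$, so $(\wt{x}_1,x_2,\dots,x_r)$ is a nontrivial zero of $\la a_1,\dots,a_r\ra$ over $F$.

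For part $(b)$, the ``if'' direction follows formally from $(a)$: if $\la\ovl{a}_1,\dots,\ovl{a}_r\ra$ is isotropic over $\kappa_v$ then $\la a_1,\dots,a_r\ra$ is isotropic over $F$, hence so is the larger form; and if $\la\ovl{c}_1,\dots,\ovl{c}_s\ra$ is isotropic then $\la c_1,\dots,c_s\ra$, and thus $t\la c_1,\dots,c_s\ra$, is isotropic over $F$. For the ``only if'' direction, I first note that, as $\pi^2\in\sq{F}$, multiplying $t$ by an even power of $\pi$ leaves the similarity class of $t\la c_1,\dots,c_s\ra$ unchanged, so I may assume $v(t)=1$. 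Writing a nontrivial zero as $A+tB=0$ with $A=\sum_i a_ix_i^2$ and $B=\sum_j c_jy_j^2$, I rescale all the $x_i,y_j$ by a common power of $\pi$ so that they lie in $\mc{O}_v$ with at least one a unit. The crux is then a value comparison driven by the fact that $v(t)=1$ is odd, whereas every term $a_ix_i^2$ and $c_jy_j^2$ has even value. If some $x_i$ is a unit, then $v(tB)\geq 1$ whenever $B\neq 0$, so $A=-tB$ forces $A\in\mfm_v$ (and $A=0$ if $B=0$); reducing gives $\sum_i\ovl{a}_i\ovl{x}_i^2=0$ with some $\ovl{x}_i\neq 0$, so $\la\ovl{a}_1,\dots,\ovl{a}_r\ra$ is isotropic. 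If no $x_i$ is a unit, then some $y_j$ is, and $v(A)\geq 2$ forces $v(B)=v(A)-1\geq 1$ (when $A\neq 0$), so $B\in\mfm_v$ and $\sum_j\ovl{c}_j\ovl{y}_j^2=0$ nontrivially, so $\la\ovl{c}_1,\dots,\ovl{c}_s\ra$ is isotropic. The only mildly delicate points are the degenerate cases $A=0$ or $B=0$, which I dispatch directly, reading off an isotropic subform over $F$ and invoking part $(a)$.
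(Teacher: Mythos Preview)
Your argument is correct. The paper itself does not prove this proposition but simply records it as a reformulation of \cite[Chap.~VI, Proposition~1.9]{Lam05}; what you have written is precisely the standard proof one finds in such references---scaling a zero to have unit coordinates, reducing modulo $\mfm_v$, and in the converse direction lifting via Hensel's lemma (which is where completeness and the nondyadic hypothesis enter), followed in part~$(b)$ by the parity comparison of values to separate the two residue forms. There is nothing to add or correct.
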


\begin{proof}
This is a reformulation of \cite[Chap.~VI, Proposition 1.9]{Lam05}.
\end{proof}

\begin{lem}\label{L:Springer}
Let $v$ be a complete nondyadic $\zz$-valuation on $F$. Then we have $u(F)=2u(\kappa_v)$. 
Moreover, for $n\in\nat$ and $a_1,\dots,a_n\in\mg{F}$ such that the  form $\la a_1,\dots,a_n\ra$ over $F$ is anisotropic, the following hold:
\begin{enumerate}[$(a)$]
\item If $v(a_1)\equiv v(a_2)\equiv \dots\equiv v(a_n)\bmod 2$, then $n\leq u(\kappa_v)$.
\item If $n=2u(\kappa_v)$, then $v(a_1\cdots a_n)\equiv u(\kappa_v)\bmod 2$.
\end{enumerate}
\end{lem}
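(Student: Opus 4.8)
The plan is to use Springer's theorem (Proposition~2.14) as the central tool and build everything from the residue field $\kappa_v$. First I would establish the formula $u(F)=2u(\kappa_v)$. For the inequality $u(F)\geq 2u(\kappa_v)$, I would take an anisotropic form $\psi\simeq\la \ovl{c}_1,\dots,\ovl{c}_s\ra$ over $\kappa_v$ of dimension $s=u(\kappa_v)$, lift each $\ovl{c}_i$ to a unit $c_i\in\mg{\mc{O}}_v$, choose $t\in\mg{F}$ with $v(t)$ odd, and form $\la c_1,\dots,c_s\ra\perp t\la c_1,\dots,c_s\ra$ over $F$. By part~$(b)$ of Springer's theorem this form is anisotropic of dimension $2u(\kappa_v)$, since neither residue form (both equal to $\psi$) is isotropic. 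For the reverse inequality $u(F)\leq 2u(\kappa_v)$, I would take any form of dimension $2u(\kappa_v)+1$ over $F$, scale each entry so its valuation is $0$ or $1$, and split it as $\la a_1,\dots,a_r\ra\perp t\la c_1,\dots,c_s\ra$ with $r+s=2u(\kappa_v)+1$; then $\max\{r,s\}>u(\kappa_v)$, so one of the two residue forms has dimension exceeding $u(\kappa_v)$ and is therefore isotropic over $\kappa_v$, whence the original form is isotropic over $F$ by Springer.

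For part~$(a)$, suppose $\la a_1,\dots,a_n\ra$ is anisotropic over $F$ with all $v(a_i)$ congruent mod~$2$. After multiplying the whole form by a suitable power of a uniformiser (which preserves anisotropy) and by squares, I may assume every $a_i$ is a unit in $\mg{\mc{O}}_v$. By part~$(a)$ of Springer's theorem, the residue form $\la \ovl{a}_1,\dots,\ovl{a}_n\ra$ over $\kappa_v$ must then be anisotropic as well, so its dimension $n$ is bounded by $u(\kappa_v)$.

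For part~$(b)$, assume $n=2u(\kappa_v)$ and that $\la a_1,\dots,a_n\ra$ is anisotropic over $F$. Writing $e_i=v(a_i)\bmod 2\in\{0,1\}$ and reordering, I would split the form into its even-valuation part (which I may normalise to unit entries $a_1,\dots,a_r$) and its odd-valuation part ($t$ times unit entries $c_1,\dots,c_s$), with $r+s=n=2u(\kappa_v)$. Anisotropy over $F$ together with Springer's theorem part~$(b)$ forces both residue forms $\la\ovl{a}_1,\dots,\ovl{a}_r\ra$ and $\la\ovl{c}_1,\dots,\ovl{c}_s\ra$ over $\kappa_v$ to be anisotropic, hence $r\leq u(\kappa_v)$ and $s\leq u(\kappa_v)$; since $r+s=2u(\kappa_v)$ this forces $r=s=u(\kappa_v)$, so exactly $s=u(\kappa_v)$ of the entries have odd valuation. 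Therefore $v(a_1\cdots a_n)\equiv s=u(\kappa_v)\bmod 2$, as claimed. The main point to be careful about is the normalisation step: I must track that multiplying entries by squares and permuting them changes neither the isometry class nor the parity count $v(a_1\cdots a_n)\bmod 2$, so that the reductions to unit entries are legitimate and the parity bookkeeping in part~$(b)$ is exact.
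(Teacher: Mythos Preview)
Your proposal is correct and follows essentially the same route as the paper. The only differences are cosmetic: the paper cites \cite[Chap.~VI, Corollary~1.10]{Lam05} for $u(F)=2u(\kappa_v)$ rather than spelling out the two inequalities, and in part~$(b)$ the paper invokes the freshly proven part~$(a)$ (applied to the even-valuation block and to the odd-valuation block separately) to get $r,n-r\leq u(\kappa_v)$, whereas you re-apply Springer's theorem directly to reach the same bounds---the substance is identical.
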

\begin{proof}
See \cite[Chap.~VI, Corollary 1.10]{Lam05} for the equality $u(F)=2u(\kappa_v)$.

$(a)$\,
Assume that $v(a_1)\equiv \dots\equiv v(a_n)\bmod 2$. Then $\la a_1,\dots,a_n\ra$ is similar to  $\la c_1,\dots,c_n\ra$ for certain $c_1,\ldots,c_n\in\mg{\mc{O}}_v$.
It follows that $\la c_1,\dots,c_n\ra$ is anisotropic.
Hence $\la \ovl{c}_1,\dots,\ovl{c}_n\ra$ is anisotropic over $\kappa_v$, by \Cref{Springer}, whereby $n\leq u(\kappa_v)$.

$(b)$\,
We may assume that $v(a_1),\dots,v(a_r)$ are even and $v(a_{r+1}),\dots,v(a_n)$ are odd, for some $r\leq n$.
Then $v(a_1\cdots a_n)=v(a_1)+\dots +v(a_n)\equiv n-r\bmod 2$.
By $(a)$, we have $r,n-r\leq u(\kappa_v)$, so if $n=2 u(\kappa_v)$, then $r=n-r=u(\kappa_v)$.
\end{proof}

\begin{cor}\label{four-dimensionlocal}
Let $v$ be a complete nondyadic $\zz$-valuation on $F$ with $u(\kappa_v)\leq 2$. 
Let $d\in\mg{F}$ be such that $v(d)$ is odd.
Then every $4$-dimensional form over $F$ of determinant $d\sq{F}$ is isotropic.
\end{cor}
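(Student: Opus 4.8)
The plan is to diagonalise $\varphi$, use a uniformiser to normalise the valuations of its entries modulo squares, and then invoke Springer's theorem (\Cref{Springer}$(b)$) to reduce isotropy over $F$ to isotropy of a residue form over $\kappa_v$. The determinant hypothesis is precisely what will guarantee that the relevant residue form is $3$-dimensional, hence isotropic because $u(\kappa_v)\leq 2$.

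First I would fix a uniformiser $\pi\in\mg{F}$ with $v(\pi)=1$ and write $\varphi\simeq\la a_1,a_2,a_3,a_4\ra$ with $a_1,\dots,a_4\in\mg{F}$. Scaling each $a_i$ by a suitable square changes neither the isometry class nor the square class of the product $a_1a_2a_3a_4$, so I may assume $v(a_i)\in\{0,1\}$ for all $i$: an entry of even valuation is replaced by a unit, and an entry of odd valuation by $\pi$ times a unit. After reordering, this gives
$$\varphi\simeq\la u_1,\dots,u_r\ra\perp\pi\la u_{r+1},\dots,u_4\ra$$
with $u_1,\dots,u_4\in\mg{\mc{O}}_v$ for some $r$ with $0\leq r\leq 4$.

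Next I would read off the parity of $r$ from the determinant. Since the determinant of $\varphi$ is $d\sq{F}$ and $u_1\cdots u_4\in\mg{\mc{O}}_v$, the product $a_1a_2a_3a_4=\pi^{4-r}u_1\cdots u_4$ lies in $d\sq{F}$, so $v(d)\equiv 4-r\equiv r\bmod 2$. As $v(d)$ is odd, $r$ is odd, and therefore $r\in\{1,3\}$ with $4-r\in\{3,1\}$. In both cases exactly one of the two residue forms $\la\ovl{u}_1,\dots,\ovl{u}_r\ra$ and $\la\ovl{u}_{r+1},\dots,\ovl{u}_4\ra$ over $\kappa_v$ has dimension $3$.

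Finally, \Cref{Springer}$(b)$ applied with $t=\pi$ shows that $\varphi$ is isotropic over $F$ if and only if at least one of these two residue forms is isotropic over $\kappa_v$. The $3$-dimensional one among them is isotropic, since $u(\kappa_v)\leq 2$ forces every $3$-dimensional form over $\kappa_v$ to be isotropic; hence $\varphi$ is isotropic. I do not expect a genuine obstacle here: the one delicate point is the valuation-parity bookkeeping, and this is exactly where the odd-determinant hypothesis is used to force the appearance of a $3$-dimensional residue form.
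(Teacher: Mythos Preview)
Your argument is correct and is essentially the same as the paper's: both rest on Springer's theorem together with the parity count on the diagonal entries forced by the odd-valuation determinant. The only difference is packaging—the paper cites \Cref{L:Springer}$(b)$ (whose proof is exactly the normalisation and parity bookkeeping you carry out), whereas you inline that argument and apply \Cref{Springer}$(b)$ directly.
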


\begin{proof}
Since $u(F)=2 u(\kappa_v)$, the statement is obvious if $u(\kappa_v)=1$.
If $u(\kappa_v)=2$, then the statement follows by part $(b)$ of \Cref{L:Springer}. 
\end{proof}

\begin{prop}\label{localstronglinkage}
Let $v$ be a complete nondyadic $\zz$-valuation on $F$. Assume that $u(\kappa_v)\leq 2$. 
Then the following hold:
\begin{enumerate}[$(a)$]
\item $\{a,b\}=0$ in $\k_2F$ for every $a,b\in\mg{\mc{O}}_v$.
\item For any $\pi \in \mg F$ such that $v(\pi)$ is odd, every element $\k_2F$ is equal to $\{\pi, u\}$ for some $u \in \mg {\mc O_v\!\!}$.
In particular, $\k_2F$ is strongly linked. 
\item $\partial_v :\k_2F \rightarrow \k_1\kappa_v$ is an isomorphism.
\end{enumerate}
\end{prop}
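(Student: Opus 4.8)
The plan is to handle the three parts in order, channelling the hypothesis $u(\kappa_v)\leq 2$ through its reformulation $\k_2\kappa_v=0$ (\Cref{u2}) and combining it with Springer's theorem (\Cref{Springer}) for part $(a)$ and with Hensel's lemma for part $(c)$; part $(b)$ is then essentially a bilinearity computation built on part $(a)$.

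For part $(a)$ I would take $a,b\in\mg{\mc{O}}_v$ and pass to residues. Since $\k_2\kappa_v=0$, the symbol $\{\ovl a,\ovl b\}$ vanishes, so $\lla\ovl a,\ovl b\rra=\la 1,-\ovl a,-\ovl b,\ovl a\ovl b\ra$ is isotropic over $\kappa_v$ by \Cref{P:symbol-zero}. As all four entries of $\la 1,-a,-b,ab\ra$ are units, part $(a)$ of \Cref{Springer} lifts this isotropy back to $F$, and a second application of \Cref{P:symbol-zero} yields $\{a,b\}=0$ in $\k_2F$.

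For part $(b)$ I would fix a uniformizer $t$ with $v(t)=1$ and first show that \emph{every} element of $\k_2F$, not merely every symbol, equals $\{t,c\}$ for a single unit $c$. Writing a symbol $\{f,g\}$ with $f=t^m u$ and $g=t^n w$ for units $u,w$ and expanding by bilinearity, the unit--unit term dies by part $(a)$, the term $\{t,t\}$ equals $\{t,-1\}$ via the standard relation $\{t,-t\}=0$, and the two mixed terms are already of the form $\{t,\text{unit}\}$; collecting gives $\{f,g\}=\{t,c\}$ with $c\in\mg{\mc{O}}_v$. Summing finitely many symbols and using bilinearity in the second slot then shows $\xi=\{t,c\}$ for every $\xi\in\k_2F$. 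Finally, for $\pi$ with $v(\pi)$ odd, writing $\pi=t^{v(\pi)}s$ and using that $\k_2F$ is $2$-torsion together with $\{s,c\}=0$ from part $(a)$ gives $\{\pi,c\}=v(\pi)\{t,c\}=\{t,c\}=\xi$. Strong linkage follows at once, since finitely many elements all admit the same $\pi$ as a common slot.

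For part $(c)$, surjectivity is immediate from the defining formula: $\partial_v(\{t,u\})=\{\ovl u\}$ for $u\in\mg{\mc{O}}_v$, and every class in $\k_1\kappa_v$ is hit, as units surject onto $\mg{\kappa_v}$. For injectivity I would invoke part $(b)$ to write an arbitrary $\xi$ as $\{t,u\}$ with $u$ a unit, so that $\partial_v(\xi)=\{\ovl u\}$; if this vanishes then $\ovl u$ is a square in $\kappa_v$, and since $v$ is complete and nondyadic, Hensel's lemma promotes this to $u\in\sq{\mc{O}}_v\subseteq\sq{F}$, whence $\{t,u\}=0$. The steps are largely mechanical once part $(a)$ is secured; the two points deserving genuine care are the collapse of the bilinear expansion in part $(b)$ together with the legitimacy of replacing $t$ by an arbitrary odd-valuation $\pi$, and the appeal to Hensel's lemma in part $(c)$, which is precisely where completeness and nondyadicity are used.
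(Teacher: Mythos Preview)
Your argument is correct and follows essentially the same route as the paper's proof. The only cosmetic differences are that in part $(a)$ the paper works with the $3$-dimensional form $\la -a,-b,ab\ra$ and invokes \Cref{L:Springer} rather than lifting isotropy of the full Pfister form via \Cref{Springer}, and in part $(b)$ the paper works directly with the given $\pi$ of odd value rather than first normalising to a uniformizer $t$; neither change affects the substance.
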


\begin{proof}
For $a,b\in\mg{\mc{O}}_v$,
as $u(\kappa_v)\leq 2$, it follows by \Cref{L:Springer} that $\la -a,-b,ab\ra$ over $F$ is isotropic, and hence $\{a,b\}=0$ in $\k_2E$, by \Cref{P:symbol-zero}. This shows~$(a)$.

To show $(b)$ and $(c)$, we fix $\pi\in\mg{F}$ such that $v(\pi)$ is odd.
By the bilinearity of the pairing $\{\cdot,\cdot\}: \mg{F}\times\mg{F}\lra \k_2 F$ and the fact that $F$ is the fraction field of $\mc{O}_v$,
it follows that $\k_2 F$ is generated by the symbols $\{a,b\}$ and $\{\pi,u\}$ with $a,b,u\in\mg{\mc{O}}_v$. 
Hence $(b)$ follows directly from $(a)$.

For $u\in\mg{\mc{O}_v\!\!}$, we have $\partial_v(\{\pi,u\})=\{\ovl{u}\}$, and if $\{\ovl{u}\}=0$, then $\ovl{u}\in\sq{\kappa_v\!\!}$, and as $v$ is complete,  we obtain by Hensel's Lemma that $u\in\sq{F}$, whereby $\{\pi,u\}=0$. In view of $(b)$, this shows that the homomorphism $\partial_v:\k_2F \rightarrow \k_1\kappa_v$ is injective, and as it surjective in general, it is
an isomorphism.
\end{proof}

\begin{lem}\label{L:rat-res-field-u}
Assume that $u(F)<8$.
Let $v$ be a nondyadic $\zz$-valuation on $F$.
Then $u(\kappa_v)\leq 2$ and $\k_2\kappa_v=0$.
\end{lem}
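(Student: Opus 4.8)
By \Cref{u2}, the two assertions $u(\kappa_v)\leq 2$ and $\k_2\kappa_v=0$ are equivalent, so the plan is to prove the former and then invoke \Cref{u2}. The main device I would use is a doubling construction that transports anisotropic forms from the residue field $\kappa_v$ to $F$ through the completion $F_v$, which is again a complete nondyadic $\zz$-valuation field with residue field $\kappa_v$. Concretely, given an anisotropic form $\la \ovl{c}_1,\dots,\ovl{c}_m\ra$ over $\kappa_v$, I would lift the $\ovl{c}_i$ to units $c_1,\dots,c_m\in\mg{\mc{O}}_v$ and fix a uniformiser $\pi\in\mg{F}$ with $v(\pi)=1$; then \Cref{Springer}$(b)$ shows that $\la c_1,\dots,c_m\ra\perp\pi\la c_1,\dots,c_m\ra$ is anisotropic over $F_v$, and since $F\subseteq F_v$ it is a fortiori anisotropic over $F$. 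Thus each anisotropic $m$-dimensional form over $\kappa_v$ yields an anisotropic $2m$-dimensional form over $F$, so that $u(F)\geq 2m$; this is the content of the inequality behind $u(F_v)=2u(\kappa_v)$ in \Cref{L:Springer}.

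With this tool in hand, I would argue by contradiction, assuming $u(\kappa_v)\geq 3$ and fixing an anisotropic form $\la \ovl{a}_1,\ovl{a}_2,\ovl{a}_3\ra$ over $\kappa_v$. The crucial step is to upgrade it to a $4$-dimensional anisotropic form. To this end I would pass to $\psi=\la \ovl{a}_1,\ovl{a}_2,\ovl{a}_3,\ovl{a}_1\ovl{a}_2\ovl{a}_3\ra$, which has trivial determinant; scaling by $\ovl{a}_1$ and discarding a square shows that $\psi$ is similar to the $2$-fold Pfister form $\lla -\ovl{a}_1\ovl{a}_2,-\ovl{a}_1\ovl{a}_3\rra$. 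By \Cref{P:symbol-zero} this Pfister form is either anisotropic or hyperbolic, and it cannot be hyperbolic: otherwise $\psi$ would be hyperbolic, forcing its $3$-dimensional subform $\la \ovl{a}_1,\ovl{a}_2,\ovl{a}_3\ra$ to be isotropic, against the choice. Hence $\psi$ is anisotropic and $u(\kappa_v)\geq 4$.

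Feeding an anisotropic $4$-dimensional form over $\kappa_v$ into the doubling construction with $m=4$ then produces an anisotropic $8$-dimensional form over $F$, whence $u(F)\geq 8$, contradicting $u(F)<8$. This forces $u(\kappa_v)\leq 2$, and $\k_2\kappa_v=0$ follows from \Cref{u2}. I expect the middle step to be the real obstacle: plain doubling of a $3$-dimensional anisotropic residue form only gives a $6$-dimensional anisotropic form over $F$, which is perfectly compatible with $u(F)<8$, so everything hinges on excluding $u(\kappa_v)=3$ by the trivial-determinant Pfister argument in order to reach dimension $8$.
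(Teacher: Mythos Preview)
Your proposal is correct and follows essentially the same route as the paper's proof: both reduce to $u(\kappa_v)\leq 2$ via \Cref{u2}, assume an anisotropic $3$-dimensional residue form, upgrade it to an anisotropic $2$-fold Pfister form via \Cref{P:symbol-zero}, and then double to an anisotropic $8$-dimensional form over $F$. The only cosmetic differences are that the paper first scales the $3$-dimensional form to the shape $\la -\ovl{a\vphantom{b}},-\ovl{b},\ovl{ab}\ra$ so that \Cref{P:symbol-zero} applies directly, and for the doubling step it cites \cite[Lemma~19.5]{EKM} (which does not require completeness) rather than passing through the completion $F_v$ and invoking \Cref{Springer} as you do; both devices yield the same $8$-dimensional anisotropic form $\lla a,b\rra\perp\pi\lla a,b\rra$ over $F$.
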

\begin{proof}
In view of \Cref{u2} it suffices to show that $u(\kappa_v)\leq 2$.
Assume on the contrary that $u(\kappa_v)>2$.
Then there exists an anisotropic $3$-dimensional quadratic form over $\kappa_v$, and after scaling we may choose it to be of the form $\la -\ovl{a\vphantom{b}},-\ovl{b},\ovl{ab}\ra$ for certain $a,b\in\mg{\mc{O}_v\!\!}$.
It follows by \Cref{P:symbol-zero} that the $2$-fold Pfister form $\lla \ovl{a\vphantom{b}},\ovl{b}\rra$ over $\kappa_v$ is anisotropic.
We choose $\pi\in\mc{O}_v$ with $v(\pi)=1$.
We obtain by \cite[Lemma 19.5]{EKM} that the $8$-dimensional 
form $\lla a,b\rra \perp \pi\lla a,b\rra$ over $F$ is anisotropic, in contradiction to the hypothesis that $u(F)<8$.
\end{proof}

\begin{lem}\label{LGPlocalslot}
Let $v$ be a nondyadic $\zz$-valuation on $F$.  
Let $d\in \mg{\mc O}_v$ and $b,c\in \mg{F}$ be such that $\la d, -b, -c, bc \ra$ is isotropic over $F$. 
Then either $\partial_v( \{b,c\})=0 $ or $\partial_v( \{b,c\}) =\{ \ovl {d}\}$ in $\k_1\kappa_v$.   
\end{lem}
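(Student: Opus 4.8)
The plan is to reduce the isotropy hypothesis to an identity of symbols via \Cref{firstslot}, and then to compute the residue $\partial_v$ directly from the explicit formula recorded just before \Cref{Springer}.

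First I would read the isotropy of $\la d, -b, -c, bc\ra$ through \Cref{firstslot}. Matching the variables so that the distinguished slot there is $d$ (that is, substituting $c\mapsto d$, $a\mapsto b$, $b\mapsto c$ in the statement of that proposition), the equivalence of $(i)$ and $(iv)$ produces an element $e\in\mg F$ with $\{b,c\}=\{d,e\}$ in $\k_2F$.

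Next I would exploit that $d$ is a unit. Since $d\in\mg{\mc O}_v$ we have $v(d)=0$, so the explicit residue formula gives
$$\partial_v(\{b,c\})=\partial_v(\{d,e\})=\{(-1)^{v(d)v(e)}\,\ovl{d^{-v(e)}e^{v(d)}}\}=\{\ovl{d}^{\,-v(e)}\}=v(e)\cdot\{\ovl d\}$$
in $\k_1\kappa_v$, where the factor $(-1)^{v(d)v(e)}=1$ and the term $e^{v(d)}=1$ both drop out because $v(d)=0$, and the final equality uses that $\k_1\kappa_v$ is $2$-torsion. Reading off the parity of $v(e)$ then yields exactly the asserted dichotomy: $\partial_v(\{b,c\})=0$ when $v(e)$ is even, and $\partial_v(\{b,c\})=\{\ovl d\}$ when $v(e)$ is odd.

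The argument is essentially mechanical once \Cref{firstslot} has been applied. The only points demanding care are the correct substitution of variables into that proposition, so that $d$ (and not $b$ or $c$) plays the role of the slot, and the clean bookkeeping in the residue formula, where the hypothesis $d\in\mg{\mc O}_v$ makes the sign factor and the $e$-contribution vanish. I do not expect any substantial obstacle beyond this.
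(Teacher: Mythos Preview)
Your argument is correct, and it takes a genuinely different route from the paper's own proof. The paper argues by a case analysis on the parities of $v(b)$ and $v(c)$: after scaling so that each of $v(b),v(c)$ lies in $\{0,1\}$, it reduces to the case $v(b)=1$, $v(c)=0$, decomposes the form as $\la d,-c\ra\perp -b\la 1,-c\ra$, and then invokes \Cref{Springer} over $F_v$ to force $\ovl{dc}\in\sq{\kappa_v}$. Your approach instead converts the isotropy directly into the symbol identity $\{b,c\}=\{d,e\}$ via \Cref{firstslot} and reads off the residue from the defining property $\partial_v(\{e,d\})=v(e)\cdot\{\ovl d\}$, which works because $d\in\mg{\mc O}_v$.

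Your method is cleaner: it bypasses both the case split and the appeal to Springer's theorem, and in fact it never uses the nondyadic hypothesis (whereas the paper's use of \Cref{Springer} does). The paper's approach, on the other hand, is slightly more self-contained in that it does not rely on the equivalence $(i)\Leftrightarrow(iv)$ of \Cref{firstslot}, which itself draws on results about Pfister forms.
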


\begin{proof}
Assume that $\partial_v( \{b,c\}) \neq 0$ in $\k_1\kappa_v$. 
Since the symbol $\{b,c\}$ remains unchanged if we scale $b$ or $c$ by an element of $\sq{F}$, we may assume that each of $v(b)$ and $v(c)$ is either $0$ or $1$. 
However, if $v(b)=v(c)=0$, then $\partial_v( \{b,c\}) =0$, contrary to the assumption.
Using that $\{b,-b^{-1}c\}=\{b,c\}=\{c,b\}=\{c,-c^{-1}b\}$, we
may therefore reduce to the case where $v(b) =1$ and $v(c)=0$. Hence $c \in \mg {\mc O}_v$, and it follows that $0\neq \partial_v(\{b,c\}) = \{ \ovl{c}\}$ in $\k_1\kappa_v$, whereby $\ovl{c} \notin \sq{\kappa}_v$.  
Since $v(b)=1$,  $c,d\in\mg{\mc{O}}_v$ and the form
$\la d, -b, -c, bc \ra = \la d, -c \ra \perp -b ~\la 1, -c \ra$ is isotropic over $F_v$,
it follows by \Cref{Springer} that one of the residues $ \ovl{dc}$ and $\ovl{c}$ in $\kappa_v$ is a square. 
As $\ovl{c}\notin\sq{\kappa}_v$, we conclude that $\ovl{dc} \in \sq{\kappa}_v$, whereby $\partial_v(\{b,c\}) = \{\ovl{c}\} = \{\ovl{d}\}$ in $\k_1\kappa_v$.  
\end{proof}

\section{Ramification sequences}\label{RS} 

Let $\Omega_F$ denote the set of all $\zz$-valuations on $F$. 
For a subfield $E$ of $F$ we put $$\Omega_{F/E}=\{v\in\Omega_F\mid v(\mg{E})=0\}\,.$$
For $\Omega\subseteq \Omega_F$ and $n\in\nat$, we say that \emph{$n$-dimensional quadratic forms over $F$ satisfy the local-global principle for isotropy with respect to $\Omega$} if every anisotropic quadratic form of dimension $n$ over $F$ remains anisotropic over $F_v$ (the completion of $F$ with respect to $v$) for some $v
\in\Omega$.

We  turn  to the situation  where $F$ is the function field of the projective line $\mathbb{P}^1_E$ over a field $E$.
By the choice of a generator, we identify $F$ with $E(X)$, the rational function field in one variable $X$ over $E$.
We denote by $\deg(f)$ the degree of $f\in E[X]$, with the convention that $\deg(0)=-\infty$.
For $f\in E[X]\setminus\{0\}$, we set $$E_f = E[X]/(f)\,,$$
which is a commutative $E$-algebra with $\dim_E(E_f)=\deg(f)$.

We denote by $\mc{P}$ the set of monic irreducible polynomials in $E[X]$.
Any $p\in\mc{P}$ determines a $\zz$-valuation $v_p$ on $E(X)$ which is trivial on $E$ and with $v_p(p)=1$. There is further a unique $\zz$-valuation $v_\infty$ on $E(X)$ such that $v_\infty(f)=-\deg(f)$ for all $f\in E[X]$.
We set $\mc{P}'=\mc{P}\cup\{\infty\}$. 
By \cite[Theorem~2.1.4]{EP}, we have
$$\Omega_{E(X)/E}=\{v_p\mid p\in\mc{P}'\}\,.$$

\begin{prop}\label{P:123-lgp}
Quadratic forms of dimension at most $3$ over $E(X)$ satisfy the local-global principle for isotropy with respect to $\Omega_{E(X)/E}$.
\end{prop}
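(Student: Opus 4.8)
The plan is to pass from quadratic forms to Milnor $K$-theory and to exploit Milnor's exact sequence for $\k_\ast E(X)$, using the valuations $v_p$ at the finite places to control the ``divisorial'' part of an element and the valuation $v_\infty$ to control the remaining constant part. Since $E(X)\subseteq E(X)_v$, an isotropic form stays isotropic over every completion, so only the converse direction of the local--global principle needs proof: if $\vf$ has $\dim(\vf)\le 3$ and is isotropic over $E(X)_v$ for every $v\in\Omega_{E(X)/E}=\{v_p\mid p\in\mc P'\}$, then $\vf$ is isotropic over $E(X)$. A $1$-dimensional form is anisotropic over every field, so that case is vacuous. For $\dim(\vf)=2$, write $\vf\simeq\la a,b\ra$ with $a,b\in\mg{E(X)}$; isotropy over a field $K\supseteq E(X)$ amounts to $-ab\in\sq K$. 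For $\dim(\vf)=3$, after scaling $\vf\simeq\la -a,-b,ab\ra$, and by \Cref{P:symbol-zero} isotropy over $K$ amounts to $\{a,b\}=0$ in $\k_2K$.

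Thus the dimension $2$ case reduces to showing that an element $g\in\mg{E(X)}$ which is a square in $E(X)_{v_p}$ for all $p\in\mc P'$ is already a square in $E(X)$. First I would use that a square has even value: for each $p\in\mc P$ the hypothesis gives $v_p(g)\in 2\zz$, so by unique factorisation in $E[X]$ one writes $g=ch^2$ with $h\in\mg{E(X)}$ and $c\in\mg E$ the leading coefficient of $g$. It then remains to handle $v_\infty$, whose residue field is $E$: as $g$ is a square in $E(X)_{v_\infty}$ and equals $c$ times a square there, Hensel's Lemma (the valuation is nondyadic since $\car(E)\neq 2$) forces $c\in\sq E$, whence $g\in\sq{E(X)}$.

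For the dimension $3$ case I would argue analogously with $\k_2$ in place of square classes. Fix $\xi=\{a,b\}\in\k_2E(X)$. For each $p\in\mc P$ the residue map $\partial_{v_p}\colon\k_2E(X)\to\k_1E_p$ factors through the completion $\k_2E(X)_{v_p}$, since $v_p$, a uniformiser and the residue field $E_p$ are unchanged under completion; hence vanishing of $\xi$ over $E(X)_{v_p}$ gives $\partial_{v_p}(\xi)=0$. Now Milnor's theorem \cite{Milnor} provides the split exact sequence
\begin{equation*}
0\lra \k_2E\xrightarrow{\ \iota\ }\k_2E(X)\xrightarrow{\ (\partial_{v_p})_{p\in\mc P}\ }\bigoplus_{p\in\mc P}\k_1E_p\lra 0,
\end{equation*}
with $\iota$ induced by $E\subseteq E(X)$, so that $\xi=\iota(\eta)$ for some $\eta\in\k_2E$. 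Finally I would invoke the place at infinity: the specialisation homomorphism $s_\infty\colon\k_2E(X)\to\k_2E$ at $v_\infty$ (the retraction of $\iota$ in the sequence above, given on symbols of units by reduction of the entries) satisfies $s_\infty\circ\iota=\id$ and factors through $\k_2E(X)_{v_\infty}$; hence $\eta=s_\infty(\xi)=0$ and $\xi=0$. The genuinely delicate point is exactly this place at infinity: the finite residues $\partial_{v_p}$ detect $\xi$ only modulo the constant subgroup $\iota(\k_2E)$ (just as, in dimension $2$, they detect $g$ only up to its leading coefficient), and eliminating this ambiguity is what the splitting of Milnor's sequence by specialisation at $v_\infty$ accomplishes. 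I note that under the paper's standing hypothesis on $E$ one has $\k_2E=0$ by \Cref{u2}, which renders the last step of the dimension $3$ case immediate; the dimension $2$ case, however, still requires the valuation $v_\infty$.
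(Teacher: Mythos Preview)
Your argument is correct, but it follows a different route from the paper's. The paper proves the contrapositive by an elementary descent: in dimension~$2$ it exhibits directly a place at which an anisotropic binary form stays anisotropic, and in dimension~$3$ it scales to $\la 1,-f,-g\ra$ with square-free $f,g$ and inducts on $\deg(f)+\deg(g)$, using Springer's theorem at a prime factor of $f$ where the residue of $g$ is a non-square, and otherwise replacing $f$ by a polynomial of smaller degree via $h^2-g=ff^\ast$. Your approach instead translates the problem into Milnor $K$-theory: vanishing of $\{a,b\}$ in each completion forces all tame residues $\partial_{v_p}(\{a,b\})$ to vanish, so by Milnor's exact sequence the symbol lies in $\iota(\k_2E)$, and the specialisation at $v_\infty$---which indeed factors through the completion---kills this constant part. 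Your method is more structural and makes transparent why exactly one needs the place at infinity (to split off $\k_2E$), whereas the paper's argument is self-contained and avoids invoking the full Milnor sequence or the specialisation formalism. One minor quibble: calling $c$ ``the leading coefficient of $g$'' is loose for $g\in\mg{E(X)}$, though the intended meaning (the class of $g$ modulo squares once all finite valuations are even) is clear.
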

\begin{proof}
Set $F=E(X)$. Consider an anisotropic quadratic form $\vf$ of dimension $n\leq 3$ over $F$.
We claim  that $\vf$ is anisotropic over $F_v$ for some $v\in\Omega_{F/E}$.
When $n\leq 1$, this is obvious, since $n$-dimensional forms are anisotropic and $\Omega_{F/E}\neq \emptyset$.

Assume next that $n=2$. Then $\vf$ is similar to $\la 1,-f\ra$ for a square-free polynomial $f\in E[X]$. 
If $f\in \mg{E}$, then since $\vf$ is anisotropic, we have $f\notin \sq{E}$, whence $\vf$ is anisotropic over $E(\!(X)\!)=F_{v_X}$.
If $f\notin E$, then we take a factor  $p\in\mc{P}$ of $f$ and, as $v_p(f)=1$, we find that $f\notin \sq{F_{v_p}\!\!\!}$, whereby $\vf$ is anisotropic over $F_{v_p}$.

Assume finally that $n=3$. Then $\vf$ is similar to $\la 1,-f,-g\ra$ for certain square-free polynomials $f,g\in E[X]$.
We show the claim by induction on $\deg(f)+\deg(g)$.
If $\deg(f)=\deg(g)=0$, then $\vf$ is defined over $E$ and therefore remains anisotropic over $E(\!(X)\!)=F_{v_X}$.
Assume now that $\deg(f)+\deg(g)\geq 1$. We may suppose that $\deg(f)\geq \deg(g)$.
Hence $f\notin E$. 
Suppose first that $g+(f)$ is a non-square in $E_f$.
Since $f$ is square-free, we find a factor $p\in\mc{P}$ of $f$ such that $\ovl{g}=g+(p)$ is a non-square in ${E}_p$.
Then $v_p(f)=1$, $v_p(g)=0$ and $\la 1,-\ovl{g}\ra$ is anisotropic over $E_p$, the residue field of $v_p$. 
It follows by 
\Cref{Springer} that $\la 1,-f,-g\ra$ is anisotropic over $F_{v_p}$, and hence so is $\vf$.
Suppose now that $g+(f)$ is a square in $E_f$.
Hence there exists $h\in E[X]$ with $\deg(h)<\deg(f)$ such that $g\equiv h^2\bmod f$. 
Then $h^2-g=ff^\ast$ for some $f^\ast\in E[X]\setminus\{0\}$.
Thus $ff^\ast$ is represented by $\la 1,-g\ra$, whereby $\la 1,-g\ra\simeq ff^\ast\la 1,-g\ra$.
Scaling by $ff^\ast$ we obtain that $\vf$ is similar to $\la 1,-f^\ast,-g\ra$.
Note that $\deg(f^\ast)<\deg(f)$. Hence, after replacing $f^\ast$ by a square-free polynomial in $E[X]$ in the same class of $\scg{F}$, the induction hypothesis applies and yields that $\vf$ is anisotropic over $F_v$ for some $v\in\Omega_{F/E}$.
\end{proof}

Let $f\in E[X]\setminus\{0\}$. 
We denote by $\supp(f)$ the set of monic irreducible factors of $f$ in $E[X]$ and by $\lc(f)$ the leading coefficient of $f$. 
For $\alpha\in E_f$ and $p\in\supp(f)$, we denote by $\alpha_p$ the image of $\alpha$ under the natural surjection $E_f\to E_p$.

For $p \in \mc P$, the residue field of the $\zz$-valuation $v_p$ on $F=E(X)$ is naturally isomorphic to $E_p=E[X]/(p)$, so we identify it henceforth with $E_p$.
We further denote the residue field of $v_\infty$ by $E_\infty$ and identify it with $E$.
We set $$\mathfrak{R}_2'(E)=\bigoplus_{p\in\mc{P}'} \k_1E_p\,.$$
For $\rho=(\rho_p)_{p\in \mc{P}'}\in{\mathfrak{R}}'_2(E)$, we define the \emph{support of $\rho$} to be the finite set $$\supp(\rho)=\{p\in\mc{P}'\mid \rho_p\neq 0\}\,.$$ 
For $p\in\mc P'$, we denote 
 the homomorphism $\partial_{v_p}:\k_2E(X)\to \k_1E_p$ by  $\partial_p$.
We obtain the \emph{ramification map} 
$$\partial=\bigoplus_{p\in\mc{P}'} \partial_p\,:\,\k_2E(X) \lra \mathfrak{R}_2'(E)\,.$$
For $\alpha\in\k_2 E(X)$ we call $\partial(\alpha)\in\mf R'_2(E)$ the \emph{ramification of $\alpha$}.

Summation over $\N_{E_p/E}:\k_1 E_p\to \k_1 E$ for all $p\in\mc{P}'$ yields a homomorphism
$\N:{\mathfrak{R}}'_2(E) \to \k_1E\,.$
We denote by $\mathfrak{R}_2(E)$ the kernel of $\N:{\mathfrak{R}}'_2(E) \lra \k_1E$.
By \cite[(7.2.1), (7.2.4) and (7.2.5)]{GS},
 we obtain an exact sequence
\begin{equation}\label{MES}
0 \lra \k_2E \lra \k_2E(X) \stackrel{\partial}\lra \mathfrak{R}'_{2}(E) \stackrel{\N}\lra \k_1E \lra 0 \,.
\end{equation}
In particular, $\mathfrak{R}_2(E)$ is equal to the image of ${\partial}:\k_2E(X)\lra \mathfrak{R}'_{2}(E)$.
The elements of $\mf R_2(E)$ are  called \emph{ramification sequences}.

Assume that $f\in E[X]\setminus\{0\}$ is square-free. Then $f=\lc(f)\cdot p_1\cdots p_r$ for some $r\in\nat$ and distinct $p_1,\dots,p_r\in\mc{P}$, and the Chinese Remainder Theorem yields a natural isomorphism of $E$-algebras
$E_f\to \prod_{i=1}^r E_{p_i}$,
which induces a surjective group homomorphism $\mg{E}_f\to \bigoplus_{i=1}^r \k_1 E_{p_i}$.
We obtain a commutative diagram:
\begin{equation}
\begin{tikzcd}\label{E:norm-diagram}
E_f^{\times} \arrow[r, "\mathsf{N}_{E_f/E}"] \arrow[d]               & E^{\times} \arrow[d] \\
\bigoplus_{p\in \mathcal P} \mathsf \k_1 E_p \arrow[r, "\mathsf{N}"'] & \mathsf{k}_1E       
\end{tikzcd}
\end{equation}

The following result is partially contained in \cite[Proposition 3.1]{BR13}.
\begin{lem}\label{L:norm-at-infinity}
Let $\rho\in\mf R_2(E)$.
Let $f\in E[X]\setminus\{0\}$ be square-free and 
such that $\supp(\rho)\subseteq \supp(f)\cup\{\infty\}$.
Then, for any $\alpha\in \mg{E}_f$ such that $\rho_p=\{\alpha_p\}$ for every $p\in\supp(f)$, we have $\rho_\infty=\{\N_{E_f/E}(\alpha)\}$ in $\k_1E$.
In particular, if $\deg(f)=1$ and $\rho_\infty=0$, then $\rho=0$.
Furthermore, if $\supp(\rho)\subseteq \{\infty\}$, then $\rho=0$.
\end{lem}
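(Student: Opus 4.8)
The plan is to read off everything from the description of $\mf R_2(E)$ as the kernel of $\N\colon\mf R'_2(E)\to\k_1E$ in the exact sequence \eqref{MES}, combined with the commutative square \eqref{E:norm-diagram}. First I would expand the relation $\N(\rho)=0$. Since $\supp(\rho)\subseteq\supp(f)\cup\{\infty\}$, only the components indexed by $p\in\supp(f)$ and by $\infty$ survive, so that
$$\N_{E_\infty/E}(\rho_\infty)+\sum_{p\in\supp(f)}\N_{E_p/E}(\rho_p)=0\,.$$
Because $E_\infty=E$, the map $\N_{E_\infty/E}$ is the identity on $\k_1E$, and because $\k_1E$ has exponent $2$ (indeed $\{a\}+\{a\}=\{a^2\}=0$), this rearranges to $\rho_\infty=\sum_{p\in\supp(f)}\N_{E_p/E}(\rho_p)$.

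Next I would substitute the hypothesis $\rho_p=\{\alpha_p\}$ for $p\in\supp(f)$, giving $\rho_\infty=\sum_{p\in\supp(f)}\N_{E_p/E}(\{\alpha_p\})$. The right-hand side is exactly the image of $\alpha$ under the composite ``down then right'' in \eqref{E:norm-diagram}, where the left vertical map sends $\alpha$ to $(\{\alpha_p\})_{p\in\supp(f)}$ and the bottom map is $\N$. Commutativity of \eqref{E:norm-diagram} identifies this with the composite ``right then down'', namely $\{\N_{E_f/E}(\alpha)\}$, which yields the claimed equality $\rho_\infty=\{\N_{E_f/E}(\alpha)\}$.

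For the two supplementary statements I would specialise. If $\deg(f)=1$, then $\supp(f)=\{p\}$ with $E_p=E_f=E$ and $\N_{E_f/E}=\id$; choosing $\alpha\in\mg{E}_f$ with $\{\alpha\}=\rho_p$, which is possible by the surjectivity of $\mg{E}_f\to\bigoplus_{p}\k_1E_p$ recorded before \eqref{E:norm-diagram}, the main identity becomes $\rho_\infty=\{\alpha\}=\rho_p$, so $\rho_\infty=0$ forces $\rho_p=0$ and hence $\rho=0$. If instead $\supp(\rho)\subseteq\{\infty\}$, then all finite components vanish, so $\N(\rho)=\N_{E_\infty/E}(\rho_\infty)=\rho_\infty$, and $\rho\in\ker\N$ gives $\rho_\infty=0$ and $\rho=0$ directly, without recourse to the diagram.

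I expect no serious obstacle here: the one genuine input is the compatibility of the global norm $\N_{E_f/E}$ with the componentwise norms and the square-class maps, which is precisely the commutativity of \eqref{E:norm-diagram} already recorded in the text. The only points requiring care are the bookkeeping identifications $E_\infty=E$ and $\N_{E_\infty/E}=\id$, the use of the exponent-$2$ property of $\k_1E$ to absorb the sign, and, in the degree-one case, the collapse $E_f=E_p=E$.
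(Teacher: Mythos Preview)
Your proposal is correct and follows essentially the same approach as the paper: expand $\N(\rho)=0$ using $E_\infty=E$ to isolate $\rho_\infty$, then invoke the commutativity of the diagram \eqref{E:norm-diagram} for the main identity, and specialise for the two supplementary claims. Your write-up is slightly more explicit (e.g.~spelling out the exponent-$2$ point and the surjectivity used to choose $\alpha$ in the degree-one case), but the argument is the same.
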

\begin{proof}
As $\N(\rho)=0$ and $E_\infty=E$, we have $\rho_\infty=
\N_{E_\infty/E}(\rho_\infty)=\sum_{p\in\supp(f)} \N_{E_p/E}(\rho_p)$ in $\k_1E_\infty=\k_1E$.
The first part of the statement now follows by the commutativity of the diagram \eqref{E:norm-diagram}.
For any $q\in\mc{P}$ with $\deg(q)=1$, we have $E_q=E$, hence if $\supp(\rho)\subseteq\supp(q)$, then $\rho_q=\N_{E_q/E}(\rho_q)=\N_{E_\infty/E}(\rho_\infty)=\rho_\infty=0$ in $\k_1E_q=\k_1E$ and it follows that $\rho=0$.
Furthermore, if $\rho_p=0$ for all $p\in\mc{P}$, then $\rho_\infty=\N_{E_\infty/E}(\rho_\infty)=\sum_{p\in\mc{P}}(\rho_p)=0$ in $\k_1E$.
\end{proof}

The following statement is well-known, see e.g.~\cite[Chap.~IX, Theorem~4.6]{Lam05} for the case where $E$ is finite or~\cite[Theorem 6.1]{Sch72} for a more general statement.

\begin{thm}[Hilbert Reciprocity]\label{HR} 
Assume that $|\scg{E'}|=2$ for every finite field extension $E'/E$. 
Let $\rho\in\mf R'_2(E)$. 
Then $\rho\in\mf R_2(E)$ if and only if $|\supp(\rho)|$ is even.
\end{thm}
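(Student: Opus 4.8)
The plan is to reduce the claim to a parity count, using that under the hypothesis every relevant local norm is an isomorphism of cyclic groups of order two. First I would record that for each $p\in\mc{P}'$ the residue field $E_p$ is a finite field extension of $E$: for $p\in\mc{P}$ this is the field $E[X]/(p)$ of degree $\deg(p)$ over $E$, and $E_\infty=E$ by the identification fixed above. The hypothesis $|\scg{E'}|=2$ for all finite $E'/E$ therefore gives $|\scg{E_p}|=2$, so $\k_1E_p\cong\zz/2\zz$ for every $p\in\mc{P}'$. Consequently $\mf{R}'_2(E)=\bigoplus_{p\in\mc{P}'}\k_1E_p$ is a direct sum of groups of order two, and for $\rho=(\rho_p)_{p}$ an index $p$ lies in $\supp(\rho)$ exactly when $\rho_p$ is the nonzero element of $\k_1E_p$.

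The decisive input is \Cref{P:2quasifinite-k1normsurj}, which under the same hypothesis asserts that $\N_{E_p/E}:\k_1E_p\to\k_1E$ is an isomorphism for every $p\in\mc{P}'$. As both groups are cyclic of order two, each $\N_{E_p/E}$ maps $0$ to $0$ and the nonzero element to the nonzero element of $\k_1E\cong\zz/2\zz$. Denoting by $\epsilon$ the unique nonzero class of $\k_1E$ and recalling that $\N=\sum_{p}\N_{E_p/E}$, I would then compute
\[
\N(\rho)=\sum_{p\in\mc{P}'}\N_{E_p/E}(\rho_p)=|\supp(\rho)|\cdot\epsilon \quad\text{in }\k_1E,
\]
since each $p\in\supp(\rho)$ contributes $\epsilon$ and each other $p$ contributes $0$. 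As $\k_1E$ has order two, the right-hand side vanishes precisely when $|\supp(\rho)|$ is even.

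Finally, since $\mf{R}_2(E)$ is by definition the kernel of $\N:\mf{R}'_2(E)\to\k_1E$, the displayed computation yields $\rho\in\mf{R}_2(E)\iff\N(\rho)=0\iff|\supp(\rho)|$ is even, which is the assertion. I do not expect a genuine obstacle here: once \Cref{P:2quasifinite-k1normsurj} forces every local norm to be an isomorphism of order-two groups, the statement is pure parity bookkeeping. The only mild care needed is to treat the infinite place on the same footing as the finite ones, which is automatic from the convention $E_\infty=E$ and $\k_1E_\infty=\k_1E$.
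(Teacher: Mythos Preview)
Your proof is correct and follows essentially the same approach as the paper's own proof: both invoke \Cref{P:2quasifinite-k1normsurj} to see that each local norm $\N_{E_p/E}:\k_1E_p\to\k_1E$ is an isomorphism of groups of order two, and then reduce the vanishing of $\N(\rho)$ to a parity count on $\supp(\rho)$. Your version is simply more explicit about the bookkeeping, but there is no difference in strategy or key input.
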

\begin{proof}
By \Cref{P:2quasifinite-k1normsurj}, for  $p\in\mc{P}'$, $\N_{E_p/E}:\k_1E_p\to \k_1E$ is an isomorphism of groups of order two.
Hence $\N(\rho)=0$ in $\k_1E$ if and only if $|\supp(\rho)|$ is even. 
\end{proof}

To motivate our discussion of a special property for square-free polynomials, we recall the following fact.

\begin{prop}\label{P:ram-reduct}
Let $f\in E[X]$ be square-free.
Let $\rho\in\mathfrak{R}_2(E)$ be such that $\supp(\rho)\subseteq \supp(f)\cup\{\infty\}$.
Then there exists $g\in E[X]$ coprime to $f$ such that $\deg(g)<\deg(f)$ and $\supp(\rho-\partial(\{f,g\}))\subseteq \supp(g)\cup\{\infty\}$.
\end{prop}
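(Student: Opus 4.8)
The plan is to build $g$ by prescribing its residues modulo the irreducible factors of $f$ via the Chinese Remainder Theorem, so that the symbol $\{f,g\}$ carries exactly the ramification $\rho$ at every prime dividing $f$. We may assume $\deg(f)\geq 1$, since if $f$ is a nonzero constant then $\supp(f)=\emptyset$, forcing $\rho=0$ by \Cref{L:norm-at-infinity}, and the assertion is degenerate.

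First I would compute the ramification of $\{f,g\}$ for a candidate polynomial $g$ that is coprime to $f$. Since $f$ is square-free, $v_p(f)=1$ for each $p\in\supp(f)$, and coprimality gives $v_p(g)=0$, so $g\in\mg{\mc{O}}_{v_p}$ with residue $g_p$ in $E_p$; the defining formula for $\partial_{v_p}$ then yields $\partial_p(\{f,g\})=\{g_p\}$ in $\k_1E_p$. On the other hand, for any $p\in\mc{P}\setminus(\supp(f)\cup\supp(g))$ one has $v_p(f)=v_p(g)=0$ and hence $\partial_p(\{f,g\})=0$. Combining this with the hypothesis that $\rho_p=0$ for every $p\notin\supp(f)\cup\{\infty\}$, I see that the desired inclusion $\supp(\rho-\partial(\{f,g\}))\subseteq\supp(g)\cup\{\infty\}$ is equivalent to the single family of conditions $\rho_p=\{g_p\}$ for all $p\in\supp(f)$; note that nothing needs to be checked at $\infty$ or at primes in $\supp(g)$, since these are permitted in the target support.

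It then remains to realise these prescribed residues by a coprime $g$ of bounded degree. For each $p\in\supp(f)$ I would choose $\alpha_p\in\mg{E_p}$ with $\{\alpha_p\}=\rho_p$. Under the Chinese Remainder isomorphism $E_f\cong\prod_{p\in\supp(f)}E_p$ these assemble into a unit $\alpha\in\mg{E_f}$, and I let $g\in E[X]$ be the unique representative of $\alpha$ with $\deg(g)<\deg(f)$, which is nonzero. Because $\alpha$ is a unit, $g_p=\alpha_p\neq 0$ in $E_p$ for each $p\in\supp(f)$, so $p\nmid g$ and $g$ is coprime to $f$; and by construction $\{g_p\}=\{\alpha_p\}=\rho_p$ for every $p\in\supp(f)$, which is exactly the condition isolated above.

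The argument is in essence a bookkeeping of the ramification map, and I do not expect a serious obstacle. The only point demanding care is the passage from the class $\alpha\in\mg{E_f}$ to an honest polynomial $g$: one must ensure that $g$ is both nonzero and coprime to $f$, so that $\{f,g\}$ is a genuine symbol in $\k_2E(X)$ and the clean identity $\partial_p(\{f,g\})=\{g_p\}$ holds at each $p\in\supp(f)$. Both follow from the fact that $\alpha$ is a unit, equivalently that each chosen residue $\alpha_p$ lies in $\mg{E_p}$. It is worth observing that the hypothesis $\rho\in\mf{R}_2(E)$, i.e.\ $\N(\rho)=0$, plays no role here, as the conclusion imposes no constraint at $\infty$.
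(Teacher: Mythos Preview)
Your argument is correct and follows essentially the same route as the paper: choose $\alpha\in\mg{E_f}$ with $\{\alpha_p\}=\rho_p$ for each $p\in\supp(f)$ via the Chinese Remainder isomorphism, lift to a polynomial $g$ of degree $<\deg(f)$, and observe that coprimality and the ramification identity $\partial_p(\{f,g\})=\{g_p\}$ then yield the support inclusion. Your write-up is more detailed than the paper's---in particular your explicit computation of $\partial_p(\{f,g\})$ and your remark that the condition $\N(\rho)=0$ is not used---but the idea is the same.
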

\begin{proof}
Let $\alpha\in\mg{E_f\!\!}$ be such that $\rho_p=\{\alpha_p\}$ for every $p\in\supp(f)$.
We take $g\in E[X]$ with $\deg(g)<\deg(f)$ such that 
$g+(f)=\alpha$ in $E_f$. 
Then $g$ is coprime to $f$ and $\supp(\rho-\partial(\{f,g\}))\subseteq \supp(fg)\cup\{\infty\}$.
Since $\rho_p=\partial_p(\{f,g\})$ for all $p\in\supp(f)$, we obtain that $\supp(\rho-\partial(\{f,g\}))\subseteq \supp(g)\cup\{\infty\}$.
\end{proof}

This statement readily yields by induction that any $\rho\in\mf R_2(E)$ is the image under $\partial$ of a sum of $\lfloor\frac{\deg(\rho)}2\rfloor$ symbols in $\k_2E(X)$, where $\deg(\rho)=\sum_{p\in\supp(\rho)}[E_p:E]$; see \cite[Theorem 3.10]{BR13}.
One can ask whether any $\rho\in\mf R_2(E)$ can be realised as the ramification of a single symbol, or more specifically, whether for any square-free polynomial $f\in E[X]$ such that $\supp(\rho)\subseteq\supp(f)\cup\{\infty\}$ there exists a polynomial $g\in E[X]$ with $\supp(\rho-\partial(\{f,g\}))=\emptyset$, hence such that
 $$\rho=\partial(\{f,g\}).$$

\begin{prop}\label{degreebound}
Let $f\in E[X]$ be square-free.
Let $\rho\in\mathfrak{R}_2(E)$ be such that $\supp(\rho)\subseteq \supp(f)\cup\{\infty\}$.
Let $\alpha\in \mg{E_f\!\!}$ be such that $\rho_p=\{\alpha_p\}$ for all $p\in\supp(f)$.
The following two conditions are equivalent:
\begin{enumerate}[$(i)$]
\item There exists $g\in E[X]\setminus\{0\}$ such that $\partial(\{f,g\})=\rho$.
\item There exists $g'\in E[X]$ coprime to $f$ with ${g'}+(f)\in\alpha\sq{E_f\!\!}$  and $f+(g')\in\sq{E_{g'}\!\!\!}$.
\end{enumerate}
Moreover, if $(i)$ and $(ii)$ hold, then $g,g'\in E[X]$ in $(i)$ and $(ii)$ can be chosen with $\deg(g)\leq \frac{1}2\deg(f)$ and $\deg(g')\leq \frac{3}2\deg(f)$.
\end{prop}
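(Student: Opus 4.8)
The plan is to reformulate $(ii)$ as an equality of ramifications and to exploit the freedom of changing the second slot of a symbol by a norm. First I would compute, for a square-free $g'$ coprime to $f$, the local components of $\partial(\{f,g'\})$ using the formula for $\partial_{v_p}$ recalled in \Cref{Preliminaries}. Since $f$ is square-free, $v_p(f)=1$ for $p\in\supp(f)$, giving $\partial_p(\{f,g'\})=\{(g')_p\}$; by the Chinese Remainder Theorem the first requirement $g'+(f)\in\alpha\sq{E_f\!\!}$ in $(ii)$ is thus equivalent to $\partial_p(\{f,g'\})=\{\alpha_p\}=\rho_p$ for all $p\in\supp(f)$. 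For $q\in\supp(g')\setminus\supp(f)$ one has $v_q(f)=0$ and $v_q(g')=1$, so $\partial_q(\{f,g'\})=\{\ovl f_q\}$, whence the second requirement $f+(g')\in\sq{E_{g'}\!\!\!}$ is equivalent to $\partial_q(\{f,g'\})=0=\rho_q$. So $(ii)$ says exactly that $\rho$ is realised as $\partial(\{f,g'\})$ by a square-free $g'$ coprime to $f$. For $(ii)\Rightarrow(i)$ I then take $g=g'$: the above equalities handle every finite place, and the place $\infty$ is automatic, because $\rho$ and $\partial(\{f,g'\})$ both lie in $\mf R_2(E)=\ker(\N)$ by \eqref{MES}, already agree at all finite places, and $\N_{E_\infty/E}$ is the identity on $\k_1E$.

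For $(i)\Rightarrow(ii)$ I would begin with any $g$ satisfying $\partial(\{f,g\})=\rho$, replace it by a square-free representative of its square class (which changes neither $\{f,g\}$ nor $\rho$), and then clear the factors it shares with $f$. The key identity is that $\{f,\cdot\}$ annihilates the values of the norm form $\la 1,-f\ra$: for $a\in E[X]$ the form $\lla f,a^2-f\rra$ contains the isotropic subform $\la 1,-f,-(a^2-f)\ra$, so $\{f,a^2-f\}=0$ by \Cref{P:symbol-zero}. Choosing $a=\prod_{p\in\supp(f)\cap\supp(g)}p$, the polynomial $a^2-f$ has valuation $1$ at every common factor and $0$ at every other $p\in\supp(f)$, so in $g\cdot(a^2-f)$ each $p\in\supp(f)$ occurs to even order. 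Replacing $g$ by a square-free representative $g'$ of the square class of $g\cdot(a^2-f)$ therefore yields a square-free $g'$ coprime to $f$ with $\partial(\{f,g'\})=\partial(\{f,g\})=\rho$, and by the reformulation of the first paragraph this $g'$ fulfils both conditions of $(ii)$.

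It remains to bound the degrees, which is the substantive part. Starting from the coprime square-free witness just produced, we have a pair $(g,k)$ with $f\equiv k^2\bmod g$; reducing $k$ modulo $g$ we may assume $\deg(k)<\deg(g)$. As $k^2-f$ is the norm of $k-\sqrt{f}$ from $E(X)(\sqrt{f})$, the same argument gives $\{f,k^2-f\}=0$, so passing from $g$ to a square-free representative $g_1$ of the square class of $(k^2-f)/g$ leaves $\{f,g\}$, and hence $\rho$, unchanged; moreover $g\mid k^2-f$ makes $(k^2-f)/g$ a polynomial, and $g_1\mid k^2-f$ supplies a new $k_1$ with $f\equiv k_1^2\bmod g_1$. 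A degree count as in the descent of \Cref{P:123-lgp} gives $\deg((k^2-f)/g)<\deg(g)$ whenever $\deg(g)>\tfrac12\deg(f)$, so iterating produces $g$ with $\partial(\{f,g\})=\rho$ and $\deg(g)\leq\tfrac12\deg(f)$, proving the bound in $(i)$ (coprimality to $f$ may be lost at a zero of $k$, but it is not needed there).

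Finally, to bound $g'$ in $(ii)$ I apply the clearing construction of the second paragraph to the reduced $g$ just obtained. Here $a=\prod_{p\in\supp(f)\cap\supp(g)}p$ divides $g$, so $\deg(a)\le\deg(g)\le\tfrac12\deg(f)$ and $\deg(a^2-f)\le\deg(f)$; consequently the resulting witness $g'$ for $(ii)$ satisfies $\deg(g')\leq\deg(g)+\deg(f)\leq\tfrac32\deg(f)$. The main obstacle throughout is this degree bookkeeping together with maintaining square-freeness and the pair structure through the descent; the equivalence itself is formal, resting only on the ramification formula and the identity $\{f,x^2-f\}=0$.
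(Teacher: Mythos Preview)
Your proof is correct and follows essentially the same route as the paper: both hinge on the identity $\{f,Q^2-f\}=0$ (you invoke it explicitly via \Cref{P:symbol-zero}, the paper uses it implicitly in the step $\{f,g\}=\{f,G\}$ for $f=Q^2-gG$), and your clearing construction $g\mapsto g\,(a^2-f)$ with $a=\gcd(f,g)=h$ is, up to the square factor $h^2$, precisely the paper's $g_0(h-f_0)$, so it produces the same $g'$. Your iterated descent is simply the constructive form of the paper's minimality argument for the bound $\deg(g)\le\tfrac12\deg(f)$.
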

\begin{proof}
If $(ii)$ is satisfied with $g'\in E[X]$,  then it follows  that $\partial(\{f,g'\})=\rho$, hence one can choose $g=g'$ to satisfy $(i)$.
Therefore $(ii)$ implies $(i)$.

Assume that $(i)$ holds.
We fix a polynomial $g\in E[X]\setminus\{0\}$ of minimal degree such that $\partial(\{f,g\})=\rho$.
Then $g$ is square-free. 
Since $\supp(\rho)\subseteq \supp(f)\cup\{\infty\}$, $f$ is a square modulo $g$.
Hence, we can write $f = Q^2 - g G$
with $Q, G \in E[X]$ such that $\deg(Q) <\deg(g)$.
Then $\{f,g \} = \{f,G\}$ in $\k_2E(X)$, whereby $\deg(G) \geq \deg(g)$ in view of the choice of $g$. It follows that $\deg(f) = \deg( gG) \geq 2 \deg(g)$, whereby $\deg(g) \leq \hbox{$\frac{1}{2}$}\deg(f)$.

Let $h$ be the greatest common monic divisor of $f$ and $g$ in $E[X]$. Write $f= f_0 h$ and $g = g_0 h$ with $f_0, g_0\in E[X]$. 
Let further $r,g'\in E[X]$ be such that $g'$ is monic and square-free and
 $g_0 (h-f_0) =g'r^2$. 
Since $f$ and $g$ are square-free, these choices imply that $f$ and $g'$ are coprime and that for every $p\in \supp(f_0)$ we have $\rho_p =\{\ovl{g\vphantom{h}}\} =\{\ovl{g_0 h}\} =\{\ovl{g'}\}$ in $\k_1E_p$.
For every $p\in \supp(h)$ we have $\rho_p =\{-\ovl{g_0 f_0}\}=\{\ovl{g'}\} $ in $\k_1E_p$. 
Hence, $\{\alpha_p\} =  \rho_p = \{\ovl{g'}\}$ for every $p \in\supp(f)$.
As $f$ is square-free we conclude that $g'+(f) \in \alpha\sq{E_f\!\!}$.
 
Since $\rho = \partial(\{f, g\})$ and $\supp(\rho) \subseteq \supp(f)\cup \{\infty\}$, we obtain that $f$ is a square modulo~$g_0$. 
Obviously $f=f_0h$ is also a square modulo $h-f_0$. 
Since $g'$ is square-free and divides $g_0(h-f_0)$, it follows that $f$ is a square modulo~$g'$.   

Note that 
$\deg(g_0) \leq \deg(g) \leq \hbox{$\frac{1}{2}$}\deg(f)$ and $\deg(h-f_0) \leq \deg(f)$.
We conclude that 
$\deg(g') \leq \deg (g_0 (h-f_0)) \leq \hbox{$\frac{3}{2}$}\deg(f)$.
Hence we have proven that $(i)$ implies $(ii)$, as well as the degree bounds on $g$ and $g'$ at the end of the statement.
\end{proof}

The idea for the degree bounds in \Cref{degreebound} stems from \cite[Proposition~1.4]{Sivatski} and was pointed out to us by M.~Raczek.
\smallskip

We call a square-free polynomial $f\in E[X]\setminus\{0\}$ \emph{square-reflexive} if, for every $\alpha \in \mg{E_f\!\!}$ with $\mathsf{N}_{E_f/E}(\alpha) \in \sq E \cup \lc(f)\sq E$, there exists a polynomial $g\in E[X]$ coprime to $f$ 
such that 
\begin{center}
$g+(f)\in\alpha\sq{E_f\!\!}$\qquad and
\qquad $f+(g)\in\sq{E_g\!\!}$.
\end{center}
Note that a square-reflexive polynomial is square-free by definition.

\begin{rem}\label{remSRuglycond}
If $f\in E[X]$ is square-reflexive, then by \Cref{degreebound} and \Cref{L:norm-at-infinity}, for every $\alpha\in \mg{E_f\!\!}$  satisfying the {norm condition} 
$$\mathsf{N}_{E_f/E}(\alpha) \in \sq E \cup \lc(f)\sq E\,,$$ there exists a square-free polynomial $g\in E[X]$
such that $f$ is a square modulo $g$ and $\partial_p(\{f,g\})=\{\alpha_p\}$ in $\k_1E_p$ for all $p\in \supp(f)$.
When $f$ has odd degree, then one can conclude the same for all $\alpha\in \mg{E_f\!\!}$, regardless of the norm condition.

Suppose now that $f\in E[X]$ is square-reflexive of even degree. Set $c=\lc(f)$.
Consider $\alpha\in \mg{E_f\!\!}$ and assume that Condition $(ii)$ in \Cref{degreebound} is satisfied, that is, 
there exists $g\in E[X]$ coprime to $f$ such that $g\in\alpha\sq{E_f\!\!}$ and $f$ is a square modulo~$g$.
Then \Cref{L:norm-at-infinity} applied to $\rho=\partial(\{f,g\})$ yields that $\N_{E_f/E}(\alpha)\in\sq{E}\cup c\sq{E}$. Hence, in this case the condition on the norm is necessary for the existence of  $g$.
\end{rem}

\begin{prop}\label{SRtoRC}
Let $f\in E[X]$ be square-free and $c=\lc(f)$. The following are equivalent:
\begin{enumerate}[$(i)$]
\item $f$ is square-reflexive.
\item For every $\rho \in \mf R_2(E)$, such that $\supp(\rho) \subseteq\supp(f)\cup\{\infty\}$ and either $\rho_\infty =0$ or $\rho_\infty =\{c\}$  in $\k_1E$, there exists $g\in E[X]\setminus \{0\}$ 
such that $\rho = \partial(\{f,g\})$.
\item For every $\alpha \in \mg E_f$  with $\N_{E_f/E}(\alpha) \in \sq{E} \cup c\sq{E}$, there exists $g' \in E[X]$ coprime to $f$ and with $\deg(g') \leq\frac{ 3}{2}\deg(f)$ such that
$g'+(f)\in\alpha\sq{E_f\!\!}$ and
 $f+(g')\in\sq{E_{g'}\!\!\!}$.
\end{enumerate}
If there exists $s\in E[X]$ such that $f-s^2$ has a factor of odd degree, then $(i)$---$(iii)$ are further equivalent to the following:
\begin{enumerate}[$(i')$]\setcounter{enumi}{1}
\item For any $\rho\in\mf R_2(E)$ such that $\supp(\rho)\subseteq\supp(f)$ there exists $g\in E[X]\setminus \{0\}$ such that $\rho=\partial(\{f,g\})$.
\end{enumerate} 
\end{prop}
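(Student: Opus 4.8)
The plan is to first establish the equivalence of $(i)$, $(ii)$ and $(iii)$, and then, under the stated hypothesis, the equivalence with $(ii')$. The tool that links all of these is the translation between the norm condition and the behaviour at $\infty$: by \Cref{L:norm-at-infinity} and the commutative diagram \eqref{E:norm-diagram}, whenever $\alpha\in\mg{E_f}$ and $\rho\in\mf R_2(E)$ satisfy $\supp(\rho)\subseteq\supp(f)\cup\{\infty\}$ and $\rho_p=\{\alpha_p\}$ for all $p\in\supp(f)$, one has $\rho_\infty=\{\N_{E_f/E}(\alpha)\}$ in $\k_1E$; hence $\N_{E_f/E}(\alpha)\in\sq{E}\cup c\sq{E}$ holds if and only if $\rho_\infty=0$ or $\rho_\infty=\{c\}$. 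I will use this dictionary in both directions, passing from a square class $\alpha$ to the sequence $\rho$ with $\rho_p=\{\alpha_p\}$ on $\supp(f)$ and $\rho_\infty=\{\N_{E_f/E}(\alpha)\}$, and conversely choosing $\alpha$ for a given $\rho$ by the surjectivity of $\mg{E_f}\to\bigoplus_{p\in\supp(f)}\k_1E_p$.

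With this dictionary the implications among $(i)$, $(ii)$ and $(iii)$ reduce to applications of \Cref{degreebound}. The implication $(iii)\Rightarrow(i)$ is immediate, since $(iii)$ is square-reflexivity with a degree bound on $g'$. For $(i)\Rightarrow(ii)$, given $\rho$ as in $(ii)$ I choose a matching $\alpha$; the dictionary shows $\alpha$ satisfies the norm condition, square-reflexivity yields a polynomial $g$ coprime to $f$ with $g+(f)\in\alpha\sq{E_f}$ and $f+(g)\in\sq{E_g}$, and the implication $(ii)\Rightarrow(i)$ of \Cref{degreebound} gives $\partial(\{f,g\})=\rho$. For $(ii)\Rightarrow(iii)$, given $\alpha$ satisfying the norm condition I form the matching $\rho$; the diagram \eqref{E:norm-diagram} shows $\N(\rho)=0$, so $\rho\in\mf R_2(E)$, the dictionary gives $\rho_\infty\in\{0,\{c\}\}$, and $(ii)$ produces $g$ with $\partial(\{f,g\})=\rho$, whereupon the implication $(i)\Rightarrow(ii)$ of \Cref{degreebound}, together with its degree bound, supplies the required $g'$.

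It remains to prove, assuming that $f-s^2$ has a factor of odd degree for some $s\in E[X]$, that $(ii)$ and $(ii')$ are equivalent. Since $(ii')$ is precisely the special case $\rho_\infty=0$ of $(ii)$, only $(ii')\Rightarrow(ii)$ requires work, where the case $\rho_\infty=0$ is immediate from $(ii')$ and the real issue is to treat $\rho$ with $\rho_\infty=\{c\}$. The crucial step is to produce one reference symbol whose ramification realises the value $\{c\}$ at $\infty$ and is otherwise supported on $\supp(f)$. To this end I take a monic irreducible factor $q$ of $f-s^2$ of odd degree (such a factor exists, since a factor of odd degree forces an irreducible factor of odd degree) and set $g_0=-q$. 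Using the formula for $\partial_{v_\infty}$ recalled in \Cref{Preliminaries}, the leading coefficient of $f$ enters with the odd exponent $\deg(q)$, giving the term $\{c\}$, while the factor $-1$ in $g_0=-q$ contributes $\deg(f)\{-1\}$, which cancels the sign term $\deg(f)\deg(q)\{-1\}$ because $\deg(q)$ is odd; thus $\partial_\infty(\{f,-q\})=\{c\}$ regardless of the parity of $\deg(f)$. The only finite place outside $\supp(f)$ at which $\partial(\{f,-q\})$ could be nonzero is $q$ itself, and since $f\equiv s^2\bmod q$ either $q\mid f$, so that $q\in\supp(f)$, or $q\nmid f$, in which case $\ovl{f}$ is a nonzero square in $E_q$ and $\partial_q(\{f,-q\})=\{\ovl{f}\}=0$; hence $\supp(\partial(\{f,-q\}))\subseteq\supp(f)\cup\{\infty\}$. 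Given such a reference symbol, for $\rho$ with $\rho_\infty=\{c\}$ I apply $(ii')$ to $\rho-\partial(\{f,-q\})$, which lies in $\mf R_2(E)$ and is supported on $\supp(f)$, obtaining $g_1\in E[X]\setminus\{0\}$ with $\partial(\{f,g_1\})=\rho-\partial(\{f,-q\})$, and then bilinearity gives $\partial(\{f,-qg_1\})=\rho$.

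I expect the equivalences among $(i)$, $(ii)$ and $(iii)$ to be routine repackagings of \Cref{degreebound} and \Cref{L:norm-at-infinity}. The main obstacle is the computation $\partial_\infty(\{f,-q\})=\{c\}$: this is where the odd-degree hypothesis is genuinely used, and the bookkeeping of the leading coefficient together with the several $\{-1\}$-contributions in the formula for $\partial_{v_\infty}$ must be carried out carefully so that the result is exactly $\{c\}$ for every parity of $\deg(f)$ and whether or not $q$ divides $f$.
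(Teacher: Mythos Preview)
Your proof is correct and follows the same strategy as the paper: the equivalence of $(i)$--$(iii)$ is obtained by translating between the norm condition and the condition on $\rho_\infty$ via \Cref{L:norm-at-infinity} and then invoking \Cref{degreebound}, and for $(ii')\Rightarrow(ii)$ one subtracts the ramification of a reference symbol $\{f,\pm q\}$ with $q$ an odd-degree irreducible factor of $f-s^2$. Your choice of $g_0=-q$ (rather than the monic $q$ used in the paper) makes the identity $\partial_\infty(\{f,-q\})=\{c\}$ hold for either parity of $\deg(f)$, which is in fact a bit more careful than the paper's formulation.
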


\begin{proof}
In view of \Cref{L:norm-at-infinity} it follows directly from \Cref{degreebound} that Conditions $(i)$---$(iii)$ are equivalent.
Trivially, $(ii)$ implies $(ii')$.

Assume now that $s\in E[X]$ is such that $f-s^2$ has a factor $r\in\mc{P}$ of odd degree and that $(ii')$ holds.
We claim that this implies $(ii)$.
Consider $\rho \in \mf R_2(E)$ such that  $\supp(\rho) \subseteq \supp(f)\cup \{\infty\}$ and either $\rho_\infty =0$ or $\rho_\infty =\{c\}$  in $\k_1E$.  
If $\rho_\infty =0$, then $\supp(\rho)\subseteq\supp(f)$ and hence $\rho=\partial(\{f,g\})$ for some $g\in E[X]\setminus\{0\}$, by $(ii')$.
Assume now that $\rho_{\infty} = \{c\} \neq 0$ in $\k_1E$. 
Consider the ramification sequence
$\rho' = \rho -\partial ( \{f, r\})$.
Note that $\partial_{\infty}(\{f,r\})= \{c\}=\rho_\infty$ in $\k_1E$ and $f$ is a square modulo $r$.
Hence  $\supp(\rho') \subseteq \supp(f)$ and it follows by $(ii')$ that there exists $g \in E[X]\setminus\{0\}$ such that $\partial(\{f ,g\}) = \rho'$. Thus
$\rho = \partial (\{f, rg \})$.
\end{proof}

\begin{rem}\label{C:sqref-1storder}
Let $d\in\nat$.
The condition on $E$ that every square-free polynomial in $E[X]$ of degree $d$ is square-reflexive can be expressed by a first-order sentence in the language of fields.  
This follows from the degree bound contained in the characterization $(iii)$ of \Cref{SRtoRC}.
\end{rem}

\section{Square-reflexive polynomials over different fields}\label{SRP-examples}

In this section we study the property for polynomials to be square-reflexive by looking at examples and counterexamples.

\begin{prop}\label{SRuptodeg2}
Let $f\in E[X]$ be square-free with $\deg(f)\leq 2$.
Then $f$ is square-reflexive.
\end{prop}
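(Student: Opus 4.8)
The plan is to verify the definition of square-reflexivity directly, exhibiting the polynomial $g$ in each case. Write $c=\lc(f)$. If $\deg(f)=0$ then $E_f=E[X]/(f)$ is the zero ring, so $\mg{E_f}$ is trivial and the condition is vacuous. If $\deg(f)=1$ then $E_f\cong E$, and any $\alpha\in\mg{E_f}=\mg E$ is the image of a constant $g\in\mg E$ lying in its own square class; such $g$ is coprime to $f$ and $E_g$ is the zero ring, so $f+(g)$ is trivially a square and the norm condition is not even needed. Hence I may assume $\deg(f)=2$, so that $E_f$ is an \'etale $E$-algebra of dimension $2$, namely $E_f\cong E\times E$ if $f=c(X-a_1)(X-a_2)$ with $a_1\neq a_2$, and $E_f\cong L$ a quadratic field extension if $f=c\,p$ with $p$ irreducible. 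The one elementary identity I will use throughout is that for $b\in E$ the image $\overline{b-X}\in E_f$ satisfies $f(b)=c\,\N_{E_f/E}(\overline{b-X})$, which follows from $f(X)=c\prod_i(X-\theta_i)$, the $\theta_i$ being the roots of $f$. Fix $\alpha\in\mg{E_f}$ with $\N_{E_f/E}(\alpha)\in\sq E\cup c\sq E$.

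First I treat the case $\N_{E_f/E}(\alpha)\in\sq E$, where I claim $\alpha\in\mg E\cdot\sq{E_f}$. When $E_f\cong E\times E$ this is immediate: $\N(\alpha_1,\alpha_2)=\alpha_1\alpha_2\in\sq E$ forces $\alpha_1$ and $\alpha_2$ into the same square class of $E$. When $E_f\cong L$, it follows from Hilbert~90: writing $\N_{L/E}(\alpha)=n^2$ one gets $\N_{L/E}(\alpha/n)=1$, hence $\alpha/n=\N_{L/E}(\mu)\mu^{-2}$ for some $\mu\in\mg L$, so $\alpha\in\mg E\sq L$. Granting the claim, $\alpha\sq{E_f}=e\sq{E_f}$ for some $e\in\mg E$, and I take the constant $g=e$: it is coprime to $f$, its image is $e\in\alpha\sq{E_f}$, and $E_g$ is the zero ring so $f+(g)$ is a square.

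It remains to handle $\N_{E_f/E}(\alpha)\in c\sq E\setminus\sq E$; this forces $c\notin\sq E$ and, by the previous claim, $\alpha\notin\mg E\sq{E_f}$. The strategy is to find a degree-one $g=\lambda(X-b)$ with $\lambda\in\mg E$, $b\in E$ whose image lies in $\alpha\sq{E_f}$; once this is done, the condition $f+(g)\in\sq{E_g}$ is automatic. Indeed $E_g\cong E$ via $X\mapsto b$, so $f+(g)$ is a square iff $f(b)\in\sq E$; and from $\overline g=-\lambda\,\overline{b-X}\in\alpha\sq{E_f}$ together with $\N_{E_f/E}$ restricting to squaring on $\mg E$, the displayed identity yields $f(b)=c\,\N_{E_f/E}(\overline{b-X})\in c\,\N_{E_f/E}(\alpha)\sq E\subseteq c\cdot c\sq E=\sq E$. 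Moreover $f(b)\neq 0$, so $b$ is not a root of $f$ and $g$ is coprime to $f$.

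Thus everything reduces to producing a degree-one $g$ with $\overline g\in\alpha\sq{E_f}$, the step I expect to require the most care. When $E_f\cong L$, we have $\alpha\notin\mg E$, so in the basis $1,\theta$ I may write $\alpha=a_0+a_1\theta$ with $a_1\neq 0$ and take $g=a_1X+a_0$, for which $\overline g=\alpha$ exactly. When $E_f\cong E\times E$ with $f=c(X-a_1)(X-a_2)$, the requirement $\overline g\in\alpha\sq{E_f}$ reduces, after eliminating $\lambda$, to the single square-class condition $\tfrac{a_1-b}{a_2-b}\in\N_{E_f/E}(\alpha)\sq E=c\sq E$ on $b$; since $c\notin\sq E$ we have $1\notin c\sq E$, so any $t\in c\sq E$ (for instance $t=c$) satisfies $t\neq 1$ and solving $t=\tfrac{a_1-b}{a_2-b}$ gives an admissible $b=\tfrac{a_1-ta_2}{1-t}\in E\setminus\{a_1,a_2\}$, after which $\lambda$ is chosen to fix the remaining square class. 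This completes the construction in all cases; the main obstacle is precisely this split subcase, where one must exploit $1\notin c\sq E$ to guarantee that the interpolation parameter $b$ exists, the squareness of $f(b)$ then being forced by the norm identity.
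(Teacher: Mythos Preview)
Your proof is correct and takes a genuinely different route from the paper's. The paper verifies Condition~$(ii)$ of \Cref{SRtoRC} rather than the definition: it invokes \Cref{P:ram-reduct} to produce a preliminary $g$ with $\deg(g)<\deg(f)$, then uses \Cref{L:norm-at-infinity} to clear the residual ramification, splitting into cases according to whether $g$ is constant or linear and whether $f$ is irreducible or split. You instead work directly from the definition, organising the argument around the square-class of $\N_{E_f/E}(\alpha)$: when the norm lies in $\sq{E}$ you reduce to a constant $g$ via Hilbert~90 (in the field case) or a direct check (in the split case); when it lies in $c\sq{E}\setminus\sq{E}$ you build a linear $g$ explicitly, with the identity $f(b)=c\,\N_{E_f/E}(\overline{b-X})$ forcing $f(b)\in\sq{E}$ automatically. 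The two proofs share the Hilbert~90 ingredient---the paper also invokes it (via \cite[Chap.~VII, Remark~3.9]{Lam05}) in its irreducible subcase. Your approach is more elementary and self-contained, bypassing the $\k_2$-machinery; the paper's approach, by contrast, shows the ramification formalism of \Cref{RS} in action, foreshadowing its heavier use in \Cref{monicLGPI} and \Cref{L:main}. One minor wording point: where you write ``by the previous claim, $\alpha\notin\mg E\sq{E_f}$'', you are actually using the (trivial) converse of that claim; in fact all you need is the weaker $\alpha\notin\mg{E}$, which follows directly since $\N_{E_f/E}$ restricts to squaring on $\mg{E}$.
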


\begin{proof}
Set $c=\lc(f)$.
We shall verify Condition $(ii)$ of \Cref{SRtoRC}.
Consider any $\rho\in\mf R_2(E)$ such that $\supp(\rho)\subseteq \supp(f)\cup\{\infty\}$ and either $\rho_\infty=0$ or $\rho_\infty=\{c\}$. We need to show that $\rho=\partial(\{f,g\})$ for some $g\in E[X]\setminus\{0\}$.

By \Cref{P:ram-reduct}, there exists $g\in E[X]\setminus\{0\}$ with $\deg(g)<\deg(f)$ and 
$\supp(\rho-\partial(\{f,g\}))\subseteq \supp(g)\cup\{\infty\}$.
If $g\in\mg{E}$, then $\supp(\rho-\partial(\{f,g\}))\subseteq \{\infty\}$, and we conclude by \Cref{L:norm-at-infinity} that  $\rho=\partial(\{f,g\})$.

We may now assume that $g\notin \mg{E}$. Then $\deg(g)=1$ and $\deg(f)=2$.
It follows that $\partial_\infty(\{f,g\})=\{c\}$.
If $\rho_\infty=\{c\}$, then
$\supp(\rho-\partial(\{f,g\}))\subseteq \supp(g)$, and as 
$\deg(g)=1$ we conclude by \Cref{L:norm-at-infinity} that $\rho=\partial(\{f,g\})$.

We may from now on assume that $\rho_\infty=0$. 
Suppose first that $f=cp_1p_2$ for certain $p_1,p_2\in\mc{P}$ with $\deg(p_1)=\deg(p_2)=1$.
As $E_1=E$, we may choose $c'\in\mg{E}$ such that $\rho_{p_1}=\{c'\}$.
Since $\partial_{p_1}(\{f,c'\})=\{c'\}=\rho_{p_1}$ and $\rho_\infty=0=\partial_\infty(\{f,c'\})$, it follows that $\supp(\rho-\partial(\{f,c'\}))\subseteq\supp(p_2)$.
As $\rho-\partial(\{f,c'\})\in \mf R_2(E)$ and $\deg(p_2)=1$, we 
obtain by \Cref{L:norm-at-infinity} that $\rho=\partial(\{f,c'\})$, so we replace $g$ by~$c'$.
Suppose finally that $f$ is irreducible.
Let $\alpha=g+(f)\in E_f$. 
Since $\rho_\infty=0$, \Cref{L:norm-at-infinity} yields that $\N_{E_f/E}(\alpha)\in\sq{E}$.
By \cite[Chap.~VII, Remark 3.9]{Lam05}, this implies that $\alpha\in \mg{E}\cdot\sq{E_f\!\!}$. Now we can replace $g$ by an element of $\mg{E}$ and reduce to the case considered at the beginning.
\end{proof}

The condition on $E$ that all square-free polynomials in $E[X]$ of a certain degree $d$ are square-reflexive becomes restrictive when $d$ increases.
We give a quite general example of a monic square-free cubic polynomial that is not square-reflexive.

\begin{prop}\label{P:sqref-deg3}
Let $v$ be a valuation on $E$ such that $v(\sq{E})\neq v(\mg{E})$.
Then the polynomial $X(X+1)(X+a)$ is not square-reflexive over $E$ for any $a\in \mg{\mc{O}_v\!\!}\setminus\{1\}$.
\end{prop}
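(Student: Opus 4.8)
The plan is to assume that $f=X(X+1)(X+a)$ is square-reflexive and to derive a contradiction by exhibiting a square class in $E_f$ that cannot be realised. Note first that $f$ is monic with $\lc(f)=1$, that $\deg(f)=3$ is odd, and that the roots $0,-1,-a$ are pairwise distinct because $a\in\mg{\mc O_v}\setminus\{1\}$; hence $f$ is square-free and the Chinese Remainder Theorem gives $E_f\cong E\times E\times E$ via evaluation at $0$, $-1$ and $-a$. Since $v(\sq E)=2v(\mg E)\neq v(\mg E)$, I fix $t\in\mg E$ with $v(t)\notin 2v(\mg E)$; in particular $t\notin\sq E$. I then consider the element $\alpha\in\mg{E_f}$ corresponding to $(1,t,1)$ under this isomorphism. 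The decisive point in the choice is to mark the root $-1$, whose residue is nonzero, rather than the root $0$.

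First I would reduce to a polynomial $g$ of very small degree. Because $\deg(f)$ is odd, \Cref{remSRuglycond} shows that square-reflexivity yields a square-free $g_0\in E[X]\setminus\{0\}$ with $f$ a square modulo $g_0$ and $\partial_p(\{f,g_0\})=\{\alpha_p\}$ for all $p\in\supp(f)$, in spite of the fact that $\N_{E_f/E}(\alpha)=t\notin\sq E$ violates the norm condition. Setting $\rho=\partial(\{f,g_0\})$, which lies in $\mf R_2(E)$ and has $\supp(\rho)\subseteq\supp(f)\cup\{\infty\}$, the degree bound in \Cref{degreebound} provides $g\in E[X]\setminus\{0\}$ with $\partial(\{f,g\})=\rho$ and $\deg(g)\leq\tfrac12\deg(f)$, hence $\deg(g)\leq 1$. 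Replacing $g$ by its square-free part, which does not change $\{f,g\}$, I may assume $g$ is square-free; and since $f$ becomes a square modulo $g$, it is invertible modulo $g$, so $g$ is coprime to $f$. Unwinding $\partial(\{f,g\})=\rho$ through the residue formula gives exactly $g(0)\equiv 1$, $g(-1)\equiv t$, $g(-a)\equiv 1$ modulo $\sq E$, together with $f$ being a square modulo $g$ (the latter because $\supp(\rho)\subseteq\supp(f)\cup\{\infty\}$ forces $\partial_q(\{f,g\})=0$ at every $q\in\supp(g)$).

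Next I would rule out the two possible degrees. If $g\in\mg E$ is constant, then $g(0)\equiv g(-1)\equiv g(-a)$ modulo $\sq E$, contradicting $1\not\equiv t$. So $g=c_1X+c_0$ with $c_1\neq 0$, and the three square-class conditions read $c_0\equiv 1$, $c_0-c_1\equiv t$ and $c_0-ac_1\equiv 1$ modulo $\sq E$. Writing $\lambda=-c_0/c_1$ for the root of $g$, a direct computation gives $f(\lambda)=-c_0(c_0-c_1)(c_0-ac_1)/c_1^{3}$, whence $f(\lambda)\equiv -tc_1\pmod{\sq E}$; as $f$ is a square modulo $g$ we have $f(\lambda)\in\sq E$, so $c_1\equiv -t\pmod{\sq E}$. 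Now I extract the contradiction from $v$, using only parities in the value group $\Gamma=v(\mg E)$ and that $v(a)=0$. From $c_0\equiv 1$ we get $v(c_0)\in 2\Gamma$, while $c_1\equiv -t$ gives $v(c_1)\in v(t)+2\Gamma$, so $v(c_0)\neq v(c_1)$ and $v(c_0-c_1)=\min\{v(c_0),v(c_1)\}$. Since $c_0-c_1\equiv t$ forces $v(c_0-c_1)\notin 2\Gamma$ while $v(c_0)\in 2\Gamma$, this minimum must be $v(c_1)$, i.e.\ $v(c_1)<v(c_0)$. But then $v(ac_1)=v(c_1)<v(c_0)$, so $v(c_0-ac_1)=v(c_1)\notin 2\Gamma$, contradicting $c_0-ac_1\equiv 1$, which requires $v(c_0-ac_1)\in 2\Gamma$. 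Hence no such $g$ exists and $f$ is not square-reflexive.

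I expect the main obstacle to lie in the bookkeeping of the second step rather than in the final parity computation: one must correctly invoke the odd-degree strengthening of \Cref{remSRuglycond} to bypass the norm condition (since $\N_{E_f/E}(\alpha)\notin\sq E$), and then verify that the degree reduction of \Cref{degreebound} simultaneously preserves the three prescribed square classes at the roots and the property that $f$ is a square modulo $g$. Once $g$ is known to be linear or constant, the argument is the short computation above; the only genuinely clever ingredient is marking the root $-1$, since marking the root $0$ would leave open the degenerate case $v(c_0)=v(c_1)$ and fails to close the argument when $a\equiv 1$ modulo $\mfm_v$.
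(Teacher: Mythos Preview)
Your strategy matches the paper's: exhibit a ramification sequence $\rho$ supported on $\supp(f)\cup\{\infty\}$, invoke \Cref{degreebound} to force $\deg(g)\leq 1$, and then extract a parity contradiction from $v$. The paper chooses $\rho$ differently---it marks the \emph{two} roots $0$ and $-1$ with $\{\pi\}$ and sets $\rho_\infty=0$, so that the norm condition $\N_{E_f/E}(\alpha)\in\sq E$ holds outright and the detour through the odd-degree clause of \Cref{remSRuglycond} is unnecessary---but the endgame is the same valuation argument.

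There is one genuine gap. Your claim that ``$\supp(\rho)\subseteq\supp(f)\cup\{\infty\}$ forces $\partial_q(\{f,g\})=0$ at every $q\in\supp(g)$'' is only valid for $q\in\supp(g)\setminus\supp(f)$; nothing in your argument prevents the linear polynomial $g$ from being a scalar multiple of $X$, $X+1$, or $X+a$. In those cases $f(\lambda)=0$, so neither the three square-class identities $g(0)\equiv 1$, $g(-1)\equiv t$, $g(-a)\equiv 1$ nor the step ``$f(\lambda)\in\sq E$, hence $c_1\equiv -t$'' are available as written, and the inference ``$f$ a square modulo $g$ $\Rightarrow$ $f$ invertible modulo $g$'' is simply false. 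The repair is short: first compute $\partial_\infty(\{f,g\})=\{-c_1\}$ (valid regardless of coprimality) and use $\rho_\infty=\{t\}$ to get $v(c_1)\notin 2\Gamma$; then each of the three degenerate possibilities $g=c_1X$, $g=c_1(X+1)$, $g=c_1(X+a)$ is ruled out because the residue at $p=X$ would force $c_1a$, $c_1$, or $c_1a$ respectively into $\sq E$, hence $v(c_1)\in 2\Gamma$. The paper's choice of $\rho$ handles this more cleanly: after normalizing to $g=c-X$, the case $c=0$ is excluded by a single residue computation, and then $c\in\pi\sq E$ (whence $v(c)\notin 2\Gamma$) automatically rules out $c\in\{-1,-a\}$ since $v(-1)=v(-a)=0$.
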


\begin{proof}
We set $f = X(X+1)(X+a)$ and fix $\pi \in \mg E$ with $v(\pi)\notin v(\sq{E})$.
In particular, $\pi \notin \sq E$.
Consider the ramification sequence $\rho \in \mf R_2(E)$ 
given by $\rho_X=\{\pi\}$, $\rho_{X+1}=\{\pi\}$ and $\rho_p=0$ for all other $p\in\mc{P}'$.
In particular $\rho_{X+a}=0$, $\rho_\infty=0$ and $\supp(\rho)\subseteq\supp(f)$.

Suppose now that $f$ is square-reflexive.
Then, by \Cref{degreebound}, there exists a polynomial $g\in E[X]\setminus\{0\}$ with $\deg(g)\leq 1$ and such that $\rho=\partial(\{f,g\})$.

Since $\rho_\infty =0$ and $\rho\neq 0$, we must have $g\notin\mg{E}$.
Hence $\deg(g)=1$ and $0=\rho_\infty=\partial_\infty(\{f,g\})=\{-\lc(g)\}$, whereby
 $\lc(g)\in-\sq{E}$. 
Hence we may rescale $g$ by an element of $\sq{E}$ to achieve that $g =c-X$ for some $c \in E$.
Hence $\rho=\partial(\{f,c-X\})$.
Note that $\partial_X(\{f,-X\})=\{a\}\neq \{\pi\}=\rho_X$, because $a\pi\notin\sq{E}$.
Hence $c\neq 0$.
We observe that $\{c\}=\partial_X(\{f,c-X\})=\rho_X=\{\pi\}$ as well as $\{c+1\}=\partial_{X+1}(\{f,c-X\})=\rho_{X+1}=\{\pi\}$.
Therefore $c\pi,(c+1)\pi\in\sq{E}$.
As $v(\pi)\notin v(\sq{E})$, we get that $v(c),v(c+1)\notin v(\sq{E})$.
In particular $v(c)<0=v(a)$ and thus $v(c+a)=v(c)\notin v(\sq{E})$, whereas $\{c+a\}=\partial_{X+a}(\{f,c-X\})=\rho_{X+a}=0$.
This is a contradiction.
\end{proof}

\begin{prop}\label{P:real-srp}
Assume that $E$ is a real field.
Then any square-reflexive polynomial in $E[X]$ has at most three roots in $E$.
\end{prop}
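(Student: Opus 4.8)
The plan is to use a field ordering on $E$ as the obstruction, in the same spirit as the valuation-theoretic obstruction of \Cref{P:sqref-deg3}. Fix an ordering $P$ of $E$, with sign map $\sign\colon\mg E\to\{\pm1\}$, and let $R$ be a real closure of $(E,P)$. The starting point is the translation between square classes and signs: if $\{a\}=0$ in $\k_1E$ then $a\in\sq E$, so $\sign(a)=1$, while if $\{a\}=\{-1\}$ then $a\in-\sq E$, so $\sign(a)=-1$. Thus prescribing the square class of a polynomial $g$ at a root $a\in E$ of $f$ prescribes the sign of $g(a)$ in $P$. I would then reduce matters to the existence, for suitable $\alpha\in\mg{E_f}$, of a polynomial $g$ coprime to $f$ with $g+(f)\in\alpha\sq{E_f}$ and $f+(g)\in\sq{E_g}$, as in the definition of square-reflexivity.

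The key observation concerns such a $g$: writing $f\equiv h^2\pmod g$ with $h\in E[X]$, every root $\theta$ of $g$ in $R$ satisfies $f(\theta)=h(\theta)^2>0$, the value being nonzero because $f$ and $g$ are coprime. Hence $g$ has no root in any interval of $R$ on which $f<0$, and so $g$ keeps a constant sign across each such interval. Moreover, if the square class of $g$ modulo an irreducible factor $q$ of $f$ of degree $\geq2$ is trivial, i.e.\ $g+(q)\in\sq{E_q}$, then $g(\theta)>0$ at every real root $\theta$ of $q$, since a square in $E_q$ maps to a square in $R$ under the corresponding embedding. Together these two facts control $\sign g$ at \emph{all} real roots of $f$, including the non-rational ones, and this is the crucial input.

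Now assume for contradiction that $f$ is square-reflexive with at least four roots in $E$. Each real root of $f$ in $R$ is simple, and $f$ changes sign at it; hence every real root of $f$ except the smallest and the largest is an endpoint of a bounded interval on which $f<0$, whose other endpoint is again a real root of $f$. Since at most two real roots are extremal, at least two of the four roots of $f$ in $E$ are of this interior type. Choosing an interior root $a\in E$ and letting $\theta$ be the real root of $f$ adjacent to $a$ across the neighbouring interval where $f<0$, the key observation shows that any admissible $g$ must satisfy $\sign g(a)=\sign g(\theta)$.

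To conclude, I would specify $\alpha\in\mg{E_f}$ by taking its class trivial at every irreducible factor of $f$ of degree $\geq2$, and at the linear factors choosing classes in $\{0,\{-1\}\}$ so as to violate one of the sign relations just derived: either $\sign g(a)\neq\sign g(\theta)$ when $\theta\in E$ (a clash between two roots in $E$), or $\sign g(a)=-1$ when $\theta\notin E$ (for then $\sign g(\theta)=+1$ is already forced by the trivial class modulo the factor vanishing at $\theta$). Using a further root of $f$ in $E$ to adjust parity, I would arrange an even number of classes equal to $\{-1\}$; this guarantees $\N_{E_f/E}(\alpha)\in\sq E\subseteq\sq E\cup\lc(f)\sq E$, so $\alpha$ satisfies the norm condition in the definition of square-reflexivity, yet no admissible $g$ exists. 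This contradicts square-reflexivity and forces $f$ to have at most three roots in $E$. The main difficulty is precisely pinning down $\sign g$ at the non-rational real roots of $f$; this is exactly what the trivial-square-class trick at the higher-degree factors resolves, while the even-parity bookkeeping is routine and is what the availability of four roots comfortably provides.
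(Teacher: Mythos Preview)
Your argument is correct, and it pursues the same underlying obstruction as the paper—an $\alpha$ (equivalently, a ramification sequence) supported at two $E$-rational roots of $f$ with class $\{-1\}$ there—but you establish the nonexistence of $g$ by a different and more elementary route. The paper fixes roots $a_1<a_2<a_3<a_4$ in $E$, sets $\sigma=\{-1,(X-a_1)(X-a_3)\}$, and then argues in $\k_2$: it passes to the real closure $R$, uses the exact sequence \eqref{MES} together with the fact that $\k_2R=\{0,\{-1,-1\}\}$ to conclude $\{f,g\}=\{-1,\pm(X-a_1)(X-a_3)\}$ in $\k_2R(X)$, invokes \Cref{firstslot} to produce an identity $-f=f_1^2\mp(X-a_1)(X-a_3)(f_2^2+f_3^2)$ in $R(X)$, and reads off a sign contradiction on $[a_1,a_3]$ or on $[a_3,\infty)$. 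You bypass the $\k_2$-machinery entirely: from $f+(g)\in\sq{E_g}$ you get $f\geq 0$ at every root of $g$ in $R$, hence $g$ has constant sign on each interval where $f<0$, and you choose $\alpha$ so that the prescribed signs of $g$ at an interior $E$-root $a$ and its real neighbour $\theta$ disagree. Your approach avoids the exact sequence \eqref{MES} and the case split $\varepsilon=\pm 1$, at the cost of handling the possibility $\theta\notin E$ (via the trivial square-class at higher-degree factors) and the parity bookkeeping for the norm condition; the paper's version is shorter once the $\k_2$-framework is in place and makes the sign analysis particularly clean through the explicit choice of $a_1,a_3$.
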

\begin{proof}
In view of the hypothesis, we may fix a field ordering $<$ on $E$.
Consider a square-free polynomial $f\in E[X]$ with roots $a_1,a_2,a_3,a_4\in E$, numbered such that 
$a_1< a_2< a_3< a_4$. 
We set $h=(X-a_1)(X-a_3)$ and consider the symbol $\sigma=\{-1,h\}$.
Note that $\supp(\partial(\sigma))=\supp(h)\subseteq\supp(f)$.

To show that $f$ is not square-reflexive, we show that $\partial(\sigma)\neq\partial(\{f,g\})$ for all $g\in E[X]\setminus\{0\}$.
Suppose on the contrary that $\partial(\sigma)=\partial(\{f,g\})$ for some $g\in E[X]\setminus\{0\}$.
By the exact sequence (\ref{MES}), we obtain that $\sigma\equiv \{f,g\}\bmod \k_2E$.
This remains true after replacing the field $E$ by its real closure with respect to the fixed ordering.
Hence we assume now that $E$ is real closed.
Then $\k_2E$ is a group of order $2$ generated by the symbol $\{-1,-1\}$.
Since $\{-1,h\}\equiv \{f,g\}\bmod \k_2E$, we obtain that 
$\{f,g\}=\{-1,\varepsilon h\}$ where $\varepsilon=\pm 1$.
By \Cref{firstslot} it follows that the quadratic form $\la f,1,-\varepsilon h,-\varepsilon h\ra$ over $E(X)$ is isotropic.
Hence there exist rational functions $f_1,f_2,f_3\in E(X)$ such that $-f=f_1^2-\varepsilon h(f_2^2+f_3^2)$.
If $\varepsilon=1$, then we obtain that $f(x)\leq 0$ for any $x\in E$ with $a_1\leq x\leq a_3$, which contradicts the fact that $f$ has a simple root in $a_2$.
If $\varepsilon=-1$, then we get that $f(x)\leq 0$ for any $x\in E$ with $a_3<x$, contradicting the fact that $f$ has a simple root in $a_4$. 
\end{proof}

\begin{prop}\label{L:sqr-deg4-k2triv}
Assume that in $E[X]$ every monic square-free polynomial of degree $4$ divides a square-reflexive polynomial.
Then $u(E)\leq 2$.
\end{prop}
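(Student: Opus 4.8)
The plan is to prove the contrapositive via $\k_2E$. By \Cref{u2} it suffices to show $\k_2E=0$, so I assume $\k_2E\neq 0$ and aim to exhibit a monic square-free quartic dividing no square-reflexive polynomial. Fix a nonzero symbol $\{a,b\}\in\k_2E$ with $a,b\in\mg E$; by \Cref{P:symbol-zero} the form $\la -a,-b,ab\ra$, equivalently $\lla a,b\rra$, is anisotropic over $E$, so in particular $a,b\notin\sq E$. First I would clear two easy situations. If $E$ is finite then $u(E)=2$ and there is nothing to prove. If $E$ is real then $\car E=0$, so $X(X-1)(X-2)(X-3)$ is a square-free quartic all of whose square-free multiples have at least four distinct roots in $E$; by \Cref{P:real-srp} none of these multiples is square-reflexive, so the hypothesis already fails. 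Hence the hypothesis forces $E$ infinite and nonreal, and in this regime I may arrange the symbol so that $a,b,ab\notin\sq E$ (the degenerate case $|\scg E|\le 2$, where $\{-1,-1\}$ is the only possible symbol, is handled separately, as one checks there that $\k_2E=0$).

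Second, I would encode $\{a,b\}$ as ramification data on $\pp^1_E$. Since $a\not\equiv b$ in $\scg E$, the polynomials $X^2-a$ and $X^2-b$ are distinct monic irreducibles, so $f=(X^2-a)(X^2-b)$ is a monic square-free quartic with $E_f\cong E(\sqrt a)\times E(\sqrt b)$. Because $ab\notin\sq E$, the class $\{b\}$ is nonzero in $\k_1E(\sqrt a)$ and $\{a\}$ is nonzero in $\k_1E(\sqrt b)$. I define $\rho\in\mf{R}_2'(E)$ by $\rho_{X^2-a}=\{b\}$, $\rho_{X^2-b}=\{a\}$ and $\rho_p=0$ for all other $p\in\mc P'$, so $\rho_\infty=0$ and $\supp(\rho)\subseteq\supp(f)$. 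Using $\N_{E(\sqrt a)/E}(b)=b^2$ and $\N_{E(\sqrt b)/E}(a)=a^2$ one gets $\N(\rho)=0$, hence $\rho\in\mf{R}_2(E)$; concretely $\rho=\partial(\xi)$ for the explicit element $\xi=\{X^2-a,b\}+\{X^2-b,a\}\in\k_2E(X)$.

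Third, I would invoke the hypothesis: $f$ divides some square-reflexive $F\in E[X]$, and $\supp(\rho)\subseteq\supp(f)\cup\{\infty\}\subseteq\supp(F)\cup\{\infty\}$. Since $\rho_\infty=0$, the norm alternative $\rho_\infty=0$ in condition $(ii)$ of \Cref{SRtoRC} applies, and this is the key reason the argument goes through without controlling $\lc(F)$. It yields $g\in E[X]\setminus\{0\}$ with $\partial(\{F,g\})=\rho$, and, passing through the equivalence of \Cref{degreebound}, a polynomial $g'$ coprime to $F$ such that $F$ is a square modulo $g'$ while $g'(\sqrt a)\in b\sq{E(\sqrt a)}$ and $g'(\sqrt b)\in a\sq{E(\sqrt b)}$.

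Finally — and this is the step I expect to be the main obstacle — I must convert the existence of $g'$ into $\{a,b\}=0$, contradicting the anisotropy fixed at the outset. The mechanism I would pursue: by the exact sequence \eqref{MES}, the elements $\{F,g\}$ and $\xi$ of $\k_2E(X)$ have equal ramification, so their difference is a constant symbol $\eta\in\k_2E$, which I can pin down by comparing unramified parts, specialising at $\infty$ (where $\xi$ dies and $\{F,g\}$ contributes only $\{\lc(F),\lc(g)\}$). This gives an identity $\{F,g\}=\{X^2-a,b\}+\{X^2-b,a\}+\eta$ in $\k_2E(X)$. The relation $F=Q^2-g'G$ then feeds, through the Steinberg relation exactly as in the proof of \Cref{degreebound}, into a chain of slot reductions, and together with the residue conditions $g'(\sqrt a)\in b\sq{E(\sqrt a)}$, $g'(\sqrt b)\in a\sq{E(\sqrt b)}$ I would read off, via \Cref{firstslot}, the isotropy over $E$ of a form of the shape $\la\,1,-a,-b,ab\,\ra$, i.e. $\{a,b\}=0$. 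The delicate point is eliminating the otherwise free data $F$ and $g'$ so that the single surviving constraint is precisely this vanishing; I would do so by specialising the displayed symbol identity at an $E$-rational point $x_0$ avoiding the zeros of $F$ and $g'$ and exploiting that $F$ is a square modulo $g'$, which turns the local residue conditions into the solvability of $\la 1,-a,-b,ab\ra$ over $E$.
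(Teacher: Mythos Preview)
Your strategy matches the paper's: handle the real case via \Cref{P:real-srp}, then argue by contraposition through \Cref{u2}, fix a nonzero symbol in $\k_2E$, encode it as a ramification sequence $\rho$ supported in the factors of a monic square-free quartic, invoke the hypothesis to obtain a square-reflexive multiple $F$ and some $g$ with $\rho=\partial(\{F,g\})$, and derive a contradiction.

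The problem is the final step, which you correctly flag as the obstacle but do not resolve. Specializing the identity $\xi=\{F,g\}+\eta$ (with $\eta\in\k_2E$) at an $E$-rational point $x_0$ gives $\{x_0^2-a,b\}+\{x_0^2-b,a\}=\{F(x_0),g(x_0)\}+\eta$ in $\k_2E$; the left side does not simplify to $\{a,b\}$ (for instance at $x_0=0$ it equals $\{-1,ab\}$), the right side contains an uncontrolled symbol, and the residue conditions $g'(\sqrt a)\in b\sq{E(\sqrt a)}$, $g'(\sqrt b)\in a\sq{E(\sqrt b)}$ live over quadratic extensions of $E$ and are invisible to $E$-rational specialization. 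The condition that $F$ be a square modulo $g'$ helps only at roots of $g'$, which need not lie in $E$. There is no visible mechanism here for eliminating $F$, $g'$ and $\eta$ so as to isolate $\{a,b\}$.

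The paper closes the gap with a different, more deliberate construction. Via \cite[Lemma~4.3]{Bec20} (which also absorbs your side issues about arranging $a,b,ab\notin\sq E$ and the case $|\scg E|\le 2$), one writes $\sigma=\{a,b\}$ with $a,ab,(a-4)b\notin\sq E$, takes the quartic $(X^2+(a+1)X+a)(X^2+aX+a)=(X+1)(X+a)(X^2+aX+a)$, and sets $\rho=\partial(\{X^2+(a+1)X+a,a\}+\{X^2+aX+a,ab\})$. The contradiction is then supplied directly by \cite[Theorem~4.2]{Bec20}, applied to this specific $\rho$. Your quartic $(X^2-a)(X^2-b)$ and the associated $\rho$ are more symmetric, but you would need an analogue of that theorem for your data, and nothing in your sketch provides it; the paper's particular choices are tailored to make the cited result apply.
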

\begin{proof}
It follows by \Cref{P:real-srp} that $E$ is nonreal.
Suppose that $u(E)>2$. Then $\k_2 E\neq 0$, by \Cref{u2}.
Hence there exists a nonzero symbol $\sigma$ in $\k_2E$.
By \cite[Lemma~4.3]{Bec20}, we can write 
$\sigma=\{a,b\}$ 
with elements $a,b\in\mg{E}$ such that  $a,ab,(a-4)b\notin\sq{E}$. 
We now consider 
$$\rho = \partial(\{X^2+(a+1)X+a,a\}+\{X^2 +aX+a,ab\})\,.$$
Note that the polynomial $(X^2+(a+1)X+a)(X^2 +aX+a)\in E[X]$ is square-free.
Hence, by the hypothesis it divides a square-reflexive polynomial $f\in E[X]$.
As $\supp(\rho)\subseteq\supp(f)$, we obtain that
$\rho= \partial(\{f,g\})$ for some $g\in E[X]$.
By \cite[Theorem~4.2]{Bec20}, this is in contradiction with having that $\{a,b\}=\sigma\neq 0$.
\end{proof}

\section{Square-reflexive and weakly square-reflexive fields}\label{SRF}

We call the field $E$ \emph{square-reflexive} if every square-free polynomial in $E[X]$ is square-reflexive.
We further call $E$ \emph{weakly square-reflexive} if every square-free polynomial in $E[X]$ divides a square-reflexive polynomial in $E[X]$.

\begin{rem}\label{R:srf-1st-order}
It follows from \Cref{C:sqref-1storder} that square-reflexivity as a property for fields of characteristic different from $2$ is described by a first-order theory in the language of fields.
Note that this property is preserved under direct limits.
\end{rem}

\begin{prop}\label{P:wsr-field}
If $E$ is square-reflexive, then every valuation on $E$ has a $2$-divisible value group.
\end{prop}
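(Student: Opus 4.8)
The plan is to prove the contrapositive and reduce directly to \Cref{P:sqref-deg3}. First I would record the standard dictionary between $2$-divisibility of the value group and a square condition on values. For any valuation $v$ on $E$ with value group $\Gamma=v(\mg{E})$, we have $v(\sq{E})=2\Gamma$, since $v(x^2)=2v(x)$ for all $x\in\mg{E}$. As $\sq{E}\subseteq\mg{E}$ yields $v(\sq{E})\subseteq v(\mg{E})$ automatically, the group $\Gamma$ is $2$-divisible precisely when $v(\sq{E})=v(\mg{E})$, and it fails to be $2$-divisible precisely when $v(\sq{E})\neq v(\mg{E})$.

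Now suppose $E$ carries a valuation $v$ whose value group is not $2$-divisible; I want to deduce that $E$ is not square-reflexive. By the previous paragraph, $v(\sq{E})\neq v(\mg{E})$, so the hypothesis of \Cref{P:sqref-deg3} is satisfied. To apply that proposition I need an element $a\in\mg{\mc{O}_v}\setminus\{1\}$, and here $a=-1$ works uniformly: since $\car(E)\neq 2$ the value group is torsion-free, so $v(-1)=0$ and hence $-1\in\mg{\mc{O}_v}$, while $-1\neq 1$. Applying \Cref{P:sqref-deg3} with this $a$ shows that $X(X+1)(X-1)$ is a square-free polynomial in $E[X]$ that is not square-reflexive. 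Therefore $E$ is not square-reflexive, which is the required contrapositive.

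Since the substantive work is already carried out in \Cref{P:sqref-deg3}, this statement is essentially a corollary and there is no serious obstacle. The only two points needing a moment's care are the translation in the first paragraph (that $v(\sq{E})=v(\mg{E})$ is equivalent to $2$-divisibility of $\Gamma$) and the existence of an admissible $a$ for every $v$; the choice $a=-1$ settles the latter using only $\car(E)\neq 2$.
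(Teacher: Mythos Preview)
Your proof is correct and takes essentially the same approach as the paper, which also derives the statement directly from \Cref{P:sqref-deg3}; the paper's proof is in fact just a one-line reference to that proposition. You have merely spelled out the translation between $2$-divisibility of $\Gamma$ and the condition $v(\sq{E})\neq v(\mg{E})$ and made the explicit choice $a=-1$ (note, as a minor point, that value groups are torsion-free because they are ordered, not because $\car(E)\neq 2$; the characteristic assumption is only needed for $-1\neq 1$).
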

\begin{proof}
This follows from  \Cref{P:sqref-deg3}.
\end{proof}

We will now show that finite fields of odd cardinality are square-reflexive.
We obtain this as a consequence of the following classical analogue to Dirichlet's theorem on primes in an arithmetic progression.

\begin{thm}[Kornblum]\label{T:Kornblum}
Assume that $E$ is a finite field.
Let  $f,g\in E[X]$ be coprime. Then there exists $N\in\nat$ such that, for every $n\in\nat$ with $n\geq N$, there exists $q\in\mc{P}$ with $\deg(q)=n$
and $q\equiv g\bmod f$.
\end{thm}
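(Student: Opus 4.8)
The plan is to prove Kornblum's theorem via the theory of $L$-functions attached to Dirichlet characters modulo $f$ over the polynomial ring $E[X]$, in close analogy with Dirichlet's original analytic argument. Throughout I write $q_0 = |E|$ for the cardinality of the finite field, so that every monic polynomial $p \in \mc{P}$ of degree $d$ has $|E_p| = q_0^d$, and the relevant zeta function is the generating function counting monic polynomials by degree.

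First I would set up the arithmetic framework. The group $(E[X]/(f))^{\times}$ is finite, and a \emph{Dirichlet character modulo $f$} is a homomorphism $\chi : (E[X]/(f))^{\times} \lra \cc^{\times}$, extended to all monic polynomials by setting $\chi(p)=0$ when $p$ is not coprime to $f$. To each such $\chi$ I attach the $L$-function as a formal power series in a variable $t$,
\begin{equation*}
L(t,\chi) = \sum_{p \text{ monic}} \chi(p)\, t^{\deg(p)} = \prod_{p \in \mc{P}} \bigl(1 - \chi(p)\, t^{\deg(p)}\bigr)^{-1},
\end{equation*}
the Euler product being valid because $E[X]$ is a unique factorisation domain. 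The principal character $\chi_0$ gives essentially the zeta function of $E[X]$, namely $\sum_{n \geq 0} (\text{number of monic polynomials of degree } n)\, t^n = \sum_{n\geq 0} q_0^{n} t^n = (1-q_0 t)^{-1}$, corrected by the finitely many Euler factors dividing $f$; in particular $L(t,\chi_0)$ is a rational function with a simple pole at $t = q_0^{-1}$.

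The key step is to show that for every nonprincipal character $\chi$, the $L$-function $L(t,\chi)$ is in fact a \emph{polynomial} in $t$ (this is where the function-field case is dramatically simpler than the number-field case, since here $L(t,\chi)$ is a finite product coming from the finite-dimensional cohomology / the Weil bound, or elementarily from the fact that the character sum $\sum_{\deg(p)=n} \chi(p)$ over all monic $p$ of degree $n$ vanishes for $n$ large), and crucially that $L(t,\chi)$ does \emph{not} vanish at $t=q_0^{-1}$. The nonvanishing of $L(q_0^{-1},\chi)$ for nonprincipal $\chi$ is the analogue of $L(1,\chi)\neq 0$ in Dirichlet's theorem, and this is the main obstacle: one deduces it by considering the product $\prod_{\chi} L(t,\chi)$ over all characters modulo $f$, which equals the zeta function of the ring $E_f$-extension and has nonnegative power-series coefficients, forcing a contradiction if any nonprincipal $L(t,\chi)$ vanished to too high an order at $t=q_0^{-1}$.

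Finally I would extract the counting statement by orthogonality of characters. For a fixed residue $g$ coprime to $f$, the number $\pi(n;f,g)$ of $q \in \mc{P}$ with $\deg(q)=n$ and $q \equiv g \bmod f$ satisfies
\begin{equation*}
\pi(n;f,g) = \frac{1}{|(E[X]/(f))^{\times}|} \sum_{\chi} \overline{\chi(g)} \sum_{\deg(p)=n} \chi(p) + O\!\left(\tfrac{q_0^{n/2}}{n}\right),
\end{equation*}
where the error term absorbs prime powers. Taking logarithmic derivatives of the Euler products, the principal character contributes the main term $\frac{1}{|(E[X]/(f))^{\times}|}\cdot\frac{q_0^{n}}{n}$ (the prime polynomial theorem, from the pole of $L(t,\chi_0)$ at $t=q_0^{-1}$), while each nonprincipal $\chi$ contributes a term bounded by $C_\chi\, q_0^{n/2}/n$ coming from the reciprocal roots of the polynomial $L(t,\chi)$, all of which have absolute value at most $q_0^{-1/2}$ by the nonvanishing established above together with the Riemann-Hypothesis bound for curves (or, for the qualitative statement sought here, simply from $L(q_0^{-1},\chi)\neq 0$). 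Since the main term grows like $q_0^{n}/n$ and strictly dominates the error, $\pi(n;f,g) > 0$ for all sufficiently large $n$, which furnishes the desired $N$ and completes the proof.
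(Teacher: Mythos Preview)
Your sketch is correct and is precisely the argument carried out in Rosen's \emph{Number Theory in Function Fields}, Theorem~4.8, which is exactly what the paper cites; the paper itself gives no independent proof. One small slip: you say the reciprocal roots of $L(t,\chi)$ have absolute value at most $q_0^{-1/2}$, but you mean $q_0^{1/2}$ (equivalently, the roots themselves lie on $|t|=q_0^{-1/2}$); this is what yields the $O(q_0^{n/2}/n)$ error. Also, your parenthetical remark that mere nonvanishing $L(q_0^{-1},\chi)\neq 0$ already suffices for the statement as phrased is not quite right: that alone gives the Dirichlet-density version (infinitely many such primes), but to get a prime of \emph{every} sufficiently large degree $n$ you need the error from nonprincipal characters to be $o(q_0^n/n)$, which requires control on all reciprocal roots, not just the real one---and this is where the Weil bound enters. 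Your main line via the Riemann Hypothesis for curves handles this correctly.
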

\begin{proof}
This version follows from \cite[Theorem 4.8]{rosen}.
\end{proof}

\begin{thm}\label{FFSR}
Every finite field of odd cardinality is square-reflexive.
\end{thm}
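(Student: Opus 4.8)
The plan is to verify condition $(ii')$ of \Cref{SRtoRC} for every square-free $f\in E[X]$, using \Cref{T:Kornblum} together with quadratic reciprocity in $E[X]$. By \Cref{SRuptodeg2} we may assume $\deg(f)\geq 1$. Write $\ell=|E|$ (odd), $c=\lc(f)$ and $f=cf_1$ with $f_1\in E[X]$ monic square-free of degree $m=\deg(f)$. For coprime $a,b\in E[X]$ with $b\notin\mg E$ let $\left(\frac{a}{b}\right)\in\{\pm1\}$ denote the quadratic residue symbol, so that for $q\in\mc P$ we have $\left(\frac{a}{q}\right)=1$ exactly when $a$ is a square modulo $q$. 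Two facts are needed. First, for $q\in\mc P$ of degree $n$ and $c\in\mg E$, a short computation with the Euler criterion over $E_q=\ff_{\ell^n}$ gives $\left(\frac{c}{q}\right)=\chi(c)^{n}$, where $\chi\colon\scg E\to\{\pm1\}$ is the nontrivial character. Second, the reciprocity law for $E[X]$ (see \cite{rosen}) yields, for $q\in\mc P$ coprime to $f_1$, the identity $\left(\frac{f_1}{q}\right)=\left(\frac{q}{f_1}\right)(-1)^{\epsilon m n}$ with $\epsilon=\frac{\ell-1}2\bmod 2$; since $f_1$ is square-free, $\left(\frac{q}{f_1}\right)=\prod_{p\in\supp(f)}\left(\frac{q}{p}\right)$ depends only on $q\bmod f_1$, and $\left(\frac{q}{p}\right)=1$ if and only if $q_p\in\sq{E_p}$.

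The heart of the argument is the following use of Kornblum. Let $\rho\in\mf R_2(E)$ with $\supp(\rho)\subseteq\supp(f)$, and fix $\alpha\in\mg{E_f}$ with $\rho_p=\{\alpha_p\}$ for all $p\in\supp(f)$. Choose $g_0\in E[X]$ coprime to $f$ with $g_0+(f)=\alpha$. Put $s=\chi(c)(-1)^{\epsilon m}\in\{\pm1\}$. Combining the two facts above, for any $q\in\mc P$ of degree $n$ coprime to $f$ with $q\equiv g_0\bmod f$ one computes $\left(\frac{f}{q}\right)=\left(\frac{c}{q}\right)\left(\frac{f_1}{q}\right)=s^{\,n}(-1)^{N_0}$, where $N_0=|\{p\in\supp(f):\rho_p\neq 0\}|=|\supp(\rho)|$. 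By \Cref{HR} the number $N_0$ is even, so $\left(\frac{f}{q}\right)=s^{\,n}$; choosing the parity of $n$ suitably and $n$ as large as \Cref{T:Kornblum} demands, that theorem produces such a $q$ with $\left(\frac{f}{q}\right)=1$, i.e.\ with $f$ a square modulo $q$. For this $q$ we get $\partial_p(\{f,q\})=\{q_p\}=\{\alpha_p\}=\rho_p$ for all $p\in\supp(f)$, while $\partial_q(\{f,q\})=\{\ovl f_q\}=0$ and $\partial_p(\{f,q\})=0$ for all remaining $p\in\mc P$. Thus $\partial(\{f,q\})$ and $\rho$ lie in $\mf R_2(E)$, are supported in $\supp(f)\cup\{\infty\}$, and agree at every finite prime; by \Cref{L:norm-at-infinity} they also agree at $\infty$, so $\rho=\partial(\{f,q\})$. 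This establishes $(ii')$ of \Cref{SRtoRC}.

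It remains to check the hypothesis needed to pass from $(ii')$ to square-reflexivity, namely that $f-s^2$ has an odd-degree factor for some $s\in E[X]$, equivalently that there is an $r\in\mc P$ of odd degree, coprime to $f$, with $f$ a square modulo $r$. The same reciprocity computation applies: for $r\in\mc P$ of odd degree $n$ coprime to $f$ with prescribed residue $r\equiv r_0\bmod f_1$ one finds $\left(\frac{f}{r}\right)=s^{\,n}\left(\frac{r_0}{f_1}\right)=s\left(\frac{r_0}{f_1}\right)$. Since $m\geq 1$ the ring $E_{f_1}$ has a non-square, so $r_0$ coprime to $f_1$ can be chosen with $\left(\frac{r_0}{f_1}\right)=s$, forcing $\left(\frac{f}{r}\right)=1$; \Cref{T:Kornblum} then provides such an $r$ of odd degree coprime to $f$. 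With this $r$ at hand, \Cref{SRtoRC} shows that $(ii')$ implies that $f$ is square-reflexive, completing the proof. The main work is the bookkeeping of the reciprocity signs — the leading-coefficient contribution $\left(\frac{c}{q}\right)$ and the degree-parity term from the reciprocity law — together with the observation that \Cref{HR} forces $N_0$ to be even, which is precisely what removes the only potential obstruction to solving $s^{\,n}=1$.
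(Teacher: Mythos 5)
Your proof is correct, and it runs on the same engine as the paper's --- Kornblum's theorem applied to the residue class of $\alpha$ modulo $f$ --- but it closes the argument by a genuinely different use of reciprocity. The paper verifies condition $(ii)$ of \Cref{SRtoRC}: it chooses the parity of $\deg(q)$ so that $\partial_\infty(\{f,q\})=\rho_\infty$, observes that $\rho-\partial(\{f,q\})$ is then supported in $\{q\}$ alone, and invokes \Cref{HR} to conclude that this support, having even cardinality, is empty; in particular the fact that $f$ is a square modulo $q$ falls out of Hilbert reciprocity rather than being checked. You instead verify $(ii')$, proving directly via the explicit quadratic reciprocity law for $E[X]$ that $\left(\frac{f}{q}\right)=s^{\,n}(-1)^{N_0}$, using \Cref{HR} only to see that $N_0$ is even, and choosing the parity of $n$ to force $f$ to be a square modulo $q$; the remaining possible discrepancy at $\infty$ is then killed by \Cref{L:norm-at-infinity}. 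The two arguments are dual: the paper transfers control from $\infty$ to the place $q$, you transfer it from $q$ to $\infty$. Your route costs an extra step --- establishing the odd-degree-factor hypothesis needed to pass from $(ii')$ to square-reflexivity --- which you handle with a second reciprocity computation, whereas the paper's choice of $(ii)$ avoids this entirely; in exchange, your sign bookkeeping makes explicit why the only potential obstruction, the factor $(-1)^{N_0}$, vanishes. One small point to tighten: you justify the existence of $r_0$ with $\left(\frac{r_0}{f_1}\right)=s$ by saying that $E_{f_1}$ has a non-square, but a non-square of $E_{f_1}$ can have Jacobi symbol $+1$ when $f_1$ has several irreducible factors; what you actually need is that $r_0\mapsto\left(\frac{r_0}{f_1}\right)$ is a surjective homomorphism onto $\{\pm 1\}$, which follows by the Chinese Remainder Theorem from the surjectivity of the quadratic residue character on $\mg{E_p\!\!}$ for a single factor $p$ of $f_1$.
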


\begin{proof}
Assume that $E$ is finite.
Let $f\in E[X]$ be square-free and $c=\lc(f)$.
To show that $f$ is square-reflexive we verify Condition $(ii)$ in \Cref{SRtoRC}.

Consider a ramification sequence $\rho\in\mf R_2(E)$ with $\supp(\rho)\subseteq\supp(f)\cup\{\infty\}$ and either $\rho_\infty=0$ or $\rho_\infty=\{c\}$.
Let $\alpha\in \mg{E_f\!\!}$ be such that $\rho_p=\{\alpha_p\}$ for every $p\in\supp(f)$.
Let $c'\in\mg{E}$ be such that $\rho_\infty=\{c'\}$ in $\k_1E$.
\Cref{T:Kornblum} implies that there exists $q\in\mc{P}$ with  $\alpha=q+(f)$ and such that $q$ is of even degree if $c'\in\sq{E}$ and of odd degree otherwise.
If $c'\in\sq{E}$, then since $\deg(q)$ is even we obtain that $\partial_\infty(\{f,q\})=0=\rho_\infty$.
If $c'\notin\sq{E}$, then $c'\sq{E}=c\sq{E}$, and since $\deg(q)$ is odd we obtain that $\partial_\infty(\{f,q\})=\{c\}=\rho_\infty$. 
Note further that $\rho_p=\partial_p(\{f,q\})$ for all $p\in\mc{P}\setminus\{q\}$.
For $\rho'=\rho-\partial(\{f,q\})$, we obtain that $\supp(\rho')\subseteq\supp(q)$. 
But \Cref{HR} states that $|\supp(\rho')|$ is even. We conclude that $\supp(\rho')=\emptyset$. Hence $\rho'=0$ and therefore $\rho=\partial(\{f,q\})$.
\end{proof}

In \Cref{C:PAC} we will obtain an analogue of \Cref{FFSR} when $E$ is a pseudo-algebraically closed field.
Other examples of square-reflexive fields are sparse.

\begin{qu}\label{Q:qc-srf}
Assume that $\mg{E}=\sq{E}$. Does it follow that $E$ is square-reflexive?
\end{qu}

Note that a positive answer would have strong consequences,  in particular it would follow (by \Cref{T:sqf-div-sqref}  below) that $u(E(X))\leq 4$; this is not known to be true in general when $\mg{E}=\sq{E}$.
If the answer to \Cref{Q:qc-srf} turns out to be negative in general, it would be still interesting to see whether it is positive when $E$ is the quadratic closure of $\qq$.

We can at least give a nontrivial family of square-reflexive polynomials over a field satisfying the hypothesis in \Cref{Q:qc-srf}.
\begin{prop}\label{QCSR}
Assume that $\mg{E}=\sq{E}$. 
Let $f\in E[X]$ be square-free. Assume that $f=pq$ where $p,q \in E[X]$ are such that $\deg(p)\leq 5$ and $q$ is split.
Then $f$ is square-reflexive.
\end{prop}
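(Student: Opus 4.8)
The plan is to verify Condition $(iii)$ of \Cref{SRtoRC}, exploiting the hypothesis $\mg E=\sq E$ heavily. Since every element of $E$ is a square, the norm condition $\N_{E_f/E}(\alpha)\in\sq E\cup\lc(f)\sq E$ is automatically satisfied for every $\alpha$, so we must produce a suitable $g'$ for \emph{every} $\alpha\in\mg{E_f}$. Write $f=pq$ with $\deg(p)\leq 5$ and $q$ split, say $q=\lc(q)\prod_{i=1}^m(X-a_i)$ with distinct $a_i\in E$. The key simplification from $\mg E=\sq E$ is that for each linear factor $X-a_i$ we have $E_{X-a_i}=E$ and $\k_1E=\scg E=0$, so the split part $q$ contributes nothing to $\k_1E_p$ at its own places: every $\rho_{X-a_i}$ vanishes. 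Thus the only nontrivial local data of any relevant $\rho\in\mf R_2(E)$ sits over $\supp(p)\cup\{\infty\}$, where $p$ has degree at most $5$.

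**Reduction to the factor $p$.** I would work with the reformulation in Condition $(ii)$ of \Cref{SRtoRC} via the ramification sequence $\rho=\partial(\{f,g\})$ we are trying to realise. Given $\rho\in\mf R_2(E)$ with $\supp(\rho)\subseteq\supp(f)\cup\{\infty\}$ and $\rho_\infty\in\{0,\{c\}\}$, its restriction to $\supp(q)$ is automatically zero by the square-closedness argument above, so effectively $\supp(\rho)\subseteq\supp(p)\cup\{\infty\}$. The idea is to first realise this $\rho$ using the low-degree factor $p$: I expect that a square-free polynomial of degree at most $5$ is square-reflexive, or at least that its ramification data can be matched, by direct analysis. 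Indeed, by \Cref{degreebound} it suffices to find $g$ with $\partial(\{p,g\})$ matching $\rho$ at the places in $\supp(p)$ and with $p$ a square modulo $g$; because $\deg(p)\leq 5$, the algebra $E_p$ is small and, using $\mg E=\sq E$ together with the structure of $\k_1E_p$ for the residue fields $E_p$ (finite extensions of $E$), the square-classes are controllable.

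**Handling the split factor and assembling $g$.** The split factor $q$ serves a different purpose: it provides linear factors at which we can absorb discrepancies at infinity. Since each $X-a_i$ gives $\partial_\infty(\{f,X-a_i\})=\{-1\}=0$ (as $-1\in\sq E$) but lets us adjust the value at $a_i$ freely, and since $f$ is automatically a square modulo any $X-a_i$ (evaluation lands in $E=\sq E$), multiplying a candidate $g$ by appropriate linear factors $X-a_i$ keeps all conditions intact while giving flexibility. The assembly step is: take the $g$ realising the $p$-part, then correct the behaviour of $\rho_\infty$ and any residual places using the split factors, verifying throughout that $f=pq$ remains a square modulo the chosen $g'$ — which follows because $q$ being split makes $f\equiv p\pmod{X-a_i}$ trivially a square in $E=\sq E$, and the residue conditions over $\supp(p)$ were arranged in the previous step.

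**Main obstacle.** The hard part will be the base case: showing that a square-free polynomial $p$ of degree at most $5$ over a quadratically closed field $E$ has enough realisable ramification, i.e.~that the map $g\mapsto\partial(\{p,g\})$ hits every admissible $\rho$ supported on $\supp(p)\cup\{\infty\}$. This is where the degree bound $\deg(p)\leq 5$ must be used essentially — presumably because the relevant polynomial $g$ can be taken of degree $\leq\frac32\deg(p)$, and for $\deg(p)\leq 5$ one can solve the square-modulo conditions explicitly via a dimension count or a direct construction, whereas for higher degree the congruence ``$p$ is a square modulo $g$'' becomes genuinely obstructed. I would isolate this as the technical core and handle it by casework on the factorisation type of $p$ over $E$, using that $\mg E=\sq E$ collapses many square-class computations to triviality.
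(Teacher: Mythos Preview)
Your reduction is sound: since $\mg E=\sq E$ implies $\k_1E=0$, all contributions at the linear places and at $\infty$ vanish, so the problem collapses to the factor $p$. The paper follows the same reduction but pushes it one step further than you do. Because $E$ is quadratically closed, every quadratic polynomial in $E[X]$ is reducible, so after moving all linear factors of $f$ into $q$ one may assume $p=1$ or $p$ is \emph{irreducible} of degree $3$, $4$ or $5$. Your ``casework on the factorisation type of $p$'' is therefore unnecessary: there is only one nontrivial case, namely $E_p$ a field of degree at most $5$ over $E$.

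The genuine gap is in your ``main obstacle'' paragraph. The paper does not resolve this by a dimension count or ad hoc construction with the bound $\deg(g')\leq\frac32\deg(p)$; it invokes a specific external result, \cite[Lemma~2.5]{Bec02}, which says that for a simple field extension $E_p/E$ of degree at most $5$, every square class in $\mg{E_p}$ is represented by a square-free polynomial $g\in E[X]$ whose irreducible factors have degree at most~$2$. Over a quadratically closed $E$ such a $g$ is automatically split, so $\mg{E_g}=\sq{E_g}$ forces $f$ to be a square modulo $g$, and $\mg{E_q}=\sq{E_q}$ forces $g+(f)\in\alpha\sq{E_f}$; the definition of square-reflexivity is then verified directly. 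Your suggested approach via dimension counts on the space of $g$ with $\deg(g)\leq\frac32\deg(p)$ would need to show that the map $g\mapsto g\sq{E_p}$ is surjective onto $\scg{E_p}$, but the image is not a subspace and there is no obvious reason a parameter count succeeds; the degree bound $5$ is precisely the threshold at which the cited lemma is known, and that lemma has its own nontrivial proof.
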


\begin{proof}
We may suppose that $q$ contains all linear factors of $f$.
As $\mg{E}=\sq{E}$, quadratic polynomials in $E[X]$ are reducible, and $\mg{E_g\!\!}=\sq{E_g\!\!}$ holds for every separable split polynomial $g\in E[X]$. 
So, either $p=1$, or $3\leq \deg(p)\leq 5$ and $p$ is irreducible.
Consider $\alpha \in \mg {E}_f$. It follows by \cite[Lemma~2.5]{Bec02} that $\{\alpha_p\}=\{\ovl{g}\}$ in $\k_1E_p$ for a square-free polynomial $g\in E[X]\setminus (p)$ whose irreducible factors are of degree at most $2$. Since quadratic polynomials in $E[X]$ are reducible, it follows that $g$ is split. Hence $\mg{E_g\!\!} =\sq {E_g\!\!}$ and $\mg{E_q\!\!} =\sq {E_q\!\!}$. We conclude that $\alpha\ovl{g} \in \sq {E}_{f}$ and that $f$ is a square modulo $g$. 
\end{proof}

\begin{thm}\label{T:sqf-div-sqref}
Assume that $E$ is weakly square-reflexive.
Then $u(E(X))\leq 4$ and $\k_2E(X)$ is strongly linked.
\end{thm}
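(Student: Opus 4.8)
The goal is to prove two statements under the hypothesis that $E$ is weakly square-reflexive: first that $u(E(X))\leq 4$, and second that $\k_2E(X)$ is strongly linked. The plan is to establish the strong linkage claim first and then derive the $u$-invariant bound from it, using \Cref{P:nr-sl-u}, which says that if $E(X)$ is nonreal and any three symbols in $\k_2E(X)$ have a common slot, then $u(E(X))\leq 4$.

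For the nonreality step, I would first observe that the hypothesis forces $u(E)\leq 2$ via \Cref{L:sqr-deg4-k2triv}: indeed, if every square-free polynomial of degree $4$ divides a square-reflexive polynomial (a special case of the weak square-reflexivity hypothesis), then $u(E)\leq 2$, so in particular $E$ is nonreal, and hence $E(X)$ is nonreal as well.

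The main work is strong linkage. The plan is to take a finite subset of $\k_2E(X)$ and produce a common slot. Using the exact sequence \eqref{MES}, every element of $\k_2E(X)$ is determined up to $\k_2E$ by its ramification in $\mf R_2(E)$, and since $\k_2E=0$ it is determined outright. So I would start from finitely many symbols, choose a single square-free polynomial $f\in E[X]$ whose support contains the supports of all the relevant ramification sequences, and invoke weak square-reflexivity to find a square-reflexive multiple $F$ of $f$. The idea is then to realise each of the given symbols, after possibly adjusting by the common slot candidate coming from $F$, as a symbol of the form $\{F,g_i\}$; since $F$ is a common first entry, $F$ (or a fixed linear polynomial, or the distinguished $\pi$) serves as the common slot. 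Concretely, I would use the characterisation in \Cref{SRtoRC} and \Cref{degreebound}: for each symbol I match its ramification to $\partial(\{F,g_i\})$ for suitable $g_i$, checking that the relevant norm/support condition at $\infty$ is met so that square-reflexivity applies. Because $F$ is a common entry, $F$ becomes a common slot, giving strong linkage.

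I expect the main obstacle to be the bookkeeping at the place $\infty$: the square-reflexivity hypothesis only lets one realise ramification sequences whose value at $\infty$ is either trivial or equals $\{\lc(F)\}$, so for a general symbol one must arrange this by splitting off a controlled contribution (as in the proof of \Cref{SRtoRC}, using a factor of odd degree of $F-s^2$, or by absorbing an extra symbol). Handling several symbols simultaneously against a single square-reflexive $F$, while keeping $F$ as the literal common slot for all of them, is the delicate point; one must ensure the same $F$ works for every member of the finite set rather than a different polynomial for each. Once a common slot is secured for any three symbols, \Cref{P:nr-sl-u} together with nonreality of $E(X)$ immediately yields $u(E(X))\leq 4$.
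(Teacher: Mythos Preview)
Your overall strategy matches the paper's proof exactly: deduce $u(E)\leq 2$ from \Cref{L:sqr-deg4-k2triv}, hence $\k_2E=0$ and $E(X)$ is nonreal; then prove strong linkage of $\k_2E(X)$ and apply \Cref{P:nr-sl-u} to get $u(E(X))\leq 4$. The mechanism for strong linkage is also the same: collect the supports of the ramifications of the finitely many given elements into a single square-free polynomial, pass to a square-reflexive multiple $F$, and use injectivity of $\partial$ together with \Cref{SRtoRC}\,$(ii)$ to write each element as $\{F,g_i\}$.

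The one genuine gap is your handling of the place $\infty$. You correctly observe that \Cref{SRtoRC}\,$(ii)$ only covers ramification sequences with $\rho_\infty\in\{0,\{\lc(F)\}\}$, but your proposed fixes do not close this. The odd-degree-factor trick from \Cref{SRtoRC} only upgrades $(ii')$ to $(ii)$; it does not let you realise an arbitrary value of $\rho_\infty$. ``Absorbing an extra symbol'' $\{F,d\}$ with $d\in\mg{E}$ shifts $\rho_\infty$ by $\{d^{\deg F}\}$, which is useless when $\deg(F)$ is even, and you have no control over the parity of $\deg(F)$ from the weak square-reflexivity hypothesis. So when $|\scg{E}|>2$ your argument, as stated, can stall.

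The paper resolves this by a simple dichotomy you are missing. If $E$ is infinite, one first performs a change of generator of $E(X)/E$ (a M\"obius transformation) so that the new $\infty$ lies outside $\bigcup_{\alpha\in\mc S}\supp(\partial(\alpha))$; then $\partial_\infty(\alpha)=0$ for every $\alpha\in\mc S$, and \Cref{SRtoRC}\,$(ii)$ applies directly to the square-reflexive multiple $F$. If $E$ is finite, then $\mg{E}=\sq{E}\cup c\sq{E}$ for some $c$, one takes $F$ square-free with $\lc(F)=c$ and containing all supports, and $F$ is square-reflexive by \Cref{FFSR}; now automatically $\rho_\infty\in\{0,\{c\}\}$ for every $\alpha\in\mc S$. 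With this adjustment your plan is exactly the paper's proof.
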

\begin{proof}
By   \Cref{L:sqr-deg4-k2triv}, the hypothesis implies that $u(E)\leq 2$.
Hence $E$ is nonreal and $\k_2 E=0$, by \Cref{u2}.
It therefore suffices to show that $\k_2E(X)$ is strongly linked, because then $u(E(X))\leq 4$ follows by \Cref{P:nr-sl-u}.

Consider an arbitrary finite subset $\mc{S}$ of $\k_2E(X)$. 
If $E$ is infinite, then using a variable transformation, we may assume that $\partial_\infty(\alpha)=0$ for all $\alpha\in\mc{S}$, and since $E$ is weakly square-reflexive, there exists a square-reflexive polynomial $f\in E[X]$ such that 
$\supp(\alpha)\subseteq \supp(f)$ for all $\alpha\in\mc{S}$.
If $E$ is finite, then $\mg{E}=\sq{E}\cup c\sq{E}$ for some $c\in\mg{E}$, and we choose a square-free polynomial $f\in E[X]$ with $\lc(f)=c$ and such that $\supp(\alpha)\subseteq \supp(f)\cup\{\infty\}$ for all $\alpha\in\mc{S}$, and we have that $f$ is square-reflexive, by \Cref{FFSR}.

Since $\k_2 E=0$, the homomorphism
$\partial:\k_2E(X)\to \mf R_2(E)$ occurring in the exact sequence \eqref{MES} is injective. 
Therefore, and by Condition $(ii)$ in \Cref{SRtoRC}, the choice of $f$ implies that every element of $\mc{S}$ is of the form $\{f,g\}$ for some $g\in E[X]\setminus\{0\}$.
\end{proof}

The last statement motivates the following question.
\begin{qu}
Assume that $E$ is nonreal and $\k_2E(X)$ is strongly linked. Does it follow that $E$ is weakly square-reflexive?
\end{qu}

\begin{ex}\label{R:wsrf-ex}
Consider the field $E=\cc(\!(t)\!)$. 
Note that $E$ is not square-reflexive, by \Cref{P:wsr-field}, because $E$ carries a discrete valuation.
Using geometric methods going back to \cite{FS89} and \cite{Ford96}, it was shown in \cite{BG21} that $\k_2F$ is strongly linked when $F$ is the function field of a curve over $E$. 
Similar arguments can be used to prove that $E$ is weakly square-reflexive.

Let us sketch the argument showing that $E$ is weakly square-reflexive.
First we need a sufficiently large set of square-reflexive polynomials in $E[X]$: 
Suppose that $f\in E[X]$ is square-free of odd degree and such that there exists 
 a regular flat projective model $\mc{X}$ of $E(X)$ over $T=\cc[\![t]\!]$ such that the principal divisor 
$(f)_{\mc{X}}$ has normal crossings on $\mc{X}$ and each component of the special fiber $\mc{X}_s$ has odd multiplicity  in $(f)_{\mc{X}}$.
Using \cite[Lemma 8.2]{BG21} together with the exact sequence \eqref{MES} and the fact that $\k_2E=0$, it follows from these hypotheses that every element of $\s\in\k_2E(X)$ with $\supp(\partial(\s))\subseteq \supp(f)\cup\{\infty\}$ is of the form $\{f,g\}$ for some $g\in \mg{E(X)}$, whereby $f$ is square-reflexive; to apply \cite[Lemma 8.2]{BG21}  one uses the fact that the unramified Brauer group of any regular flat projective surface over $T$ is trivial.

Using certain facts on divisors from \cite[Section~1]{Sal07},
one can now show that to any normal crossing divisor $D$ on $\mc{X}$ there exists a divisor $D'$ with disjoint support such that $D+D'\equiv (f)_{\mc{X}}\bmod 2$ for some polynomial $f$ as above.
Now, starting with a square-free polynomial $f_0\in E[X]$, one can use embedded resolution of singularities to obtain a model $\mc{X}$ as above such that $(f_0)_{\mc{X}}$ has normal crossings, and applying then the previous step to $D=(f_0)_{\mc{X}}$ yields a square-reflexive polynomial $f\in E[X]$ which is divisible by $f_0$.
\end{ex}

The rational function field over the complex numbers $\cc(Y)$ gives an example of a nonreal field with $u$-invariant $2$ which is not weakly square-reflexive.

\begin{ex}\label{E:wsrf-nex}
We consider the rational function field $F=\cc(X,Y)$. 
In $\k_2F$ we consider the four symbols
$$
\begin{array}{lcllcl}
\s_1 & = & \{Y,X(X+1)\}, &\,\, \s_2 & = & \{Y+1,X(X+1)\} , \\
 \s_3 & = & \{Y,(2X+1)(X+1)\},   &\,\, \s_4 & = & \{Y,(XY+X+1)(X+1)\}\,.
 \end{array}
 $$
Under the substition $x_1=\frac{X}{X+1}$ and $x_2=Y$, we have $F=\cc(x_1,x_2)$ and the four symbols correspond to the four $F$-quaternion algebras 
$$(x_1,x_2)_F\,,\,\, (x_1,x_2+1)_F\,,\,\, (x_1+1,x_2)_F\,,\,\, (x_1x_2+1,x_2)_F\,.$$
It is shown in  \cite{ChTi19} that these four $F$-quaternion algebras do not contain a common maximal subfield.
It follows that $\s_1,\s_2,\s_3,\s_4$ do not have a common slot. In particular $\k_2F$ is not strongly linked.

Consider now the field $E=\cc(Y)$, for which we have $F=E(X)$.
By Tsen's Theorem \cite[Theorem 6.2.8]{GS}, we have $u(E)\leq 2$. Therefore $\k_2E=0$.
Since $\k_2F$ is not strongly linked, $E$ is not  weakly square-reflexive, by \Cref{T:sqf-div-sqref}.
In fact, we claim that the square-free polynomial $$g=X(X+1)(2X+1)(XY+X+1)\in E[X]$$ does not divide any square-reflexive polynomial in $E[X]$.
Suppose on the contrary that there is a square-reflexive polynomial $f\in E[X]$ which is a multiple of $g$.
For $1\leq i\leq 4$, since $\supp(\partial(\s_i))\subseteq \supp(f)$, we get that $\partial(\s_i)=\partial(\{f,h_i\})$ for some $h_i\in E[X]\setminus\{0\}$.
As $\k_2E=0$, it follows 
by the exact sequence \eqref{MES} that $\s_i=\{f,h_i\}$ for $1\leq i\leq 4$. This contradicts the fact that $\s_1,\s_2,\s_3,\s_4$ have no common slot.
\end{ex}

\section{The local-global principle for $4$-dimensional forms} \label{app}

In this section we study the situation when $u(E') \leq 2$ holds for every finite field extension $E'/E$. Under this hypothesis, we give necessary and sufficient conditions on $E$ for having that quadratic forms over $E(X)$ satisfy the local-global principle for isotropy with respect to $\zz$-valuations that are trivial on $E$.

Recall that $$\Omega_{E(X)/E}=\{v_p\mid p\in\mc{P}'\}\,.$$
For $p\in\mc{P}'$ we denote by $E(X)_p$ the completion of $E(X)$ with respect to $v_p$.

\begin{thm}\label{monicLGPI}
Assume that $u(E) \leq 2$. Let $f\in E[X]$ be square-reflexive. Then $4$-dimensional forms over $E(X)$ with determinant $f\sq{E(X)}$ satisfy the local-global principle for isotropy with respect to $\Omega_{E(X)/E}$.  
\end{thm}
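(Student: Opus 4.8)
The plan is to reduce the statement to a slot problem for a symbol in $\k_2E(X)$ and then to realise the relevant ramification by means of square-reflexivity. Since isotropy and (in dimension $4$) the determinant are both invariant under similarity, and since every $4$-dimensional form of determinant $f\sq{E(X)}$ is similar to $\la f,-a,-b,ab\ra$ for suitable $a,b\in\mg{E(X)}$ (scale the form so that it represents $f$, split off $\la f\ra$, and write the resulting determinant-$1$ ternary complement as $\la -a,-b,ab\ra$), I may assume $\vf=\la f,-a,-b,ab\ra$. Set $\s=\{a,b\}\in\k_2E(X)$. By \Cref{firstslot}, for any field extension $K/E(X)$ the form $\vf$ is isotropic over $K$ if and only if $f$ is a slot of $\s$ over $K$, that is, $\s=\{f,g\}$ for some $g\in\mg K$. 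Thus the hypothesis says that $f$ is a slot of $\s$ over every completion $E(X)_p$ with $p\in\mc P'$, and the goal is to produce a single $g\in\mg{E(X)}$ with $\s=\{f,g\}$.

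The engine for producing $g$ is the exact sequence \eqref{MES} together with $\k_2E=0$, which holds because $u(E)\leq 2$ by \Cref{u2}; hence $\partial\colon\k_2E(X)\to\mf R_2(E)$ is injective. It therefore suffices to find $g\in E[X]\setminus\{0\}$ with $\partial(\{f,g\})=\partial(\s)$, since then $\s-\{f,g\}$ lies in the trivial kernel of $\partial$. I would like to extract such a $g$ from square-reflexivity via Condition $(ii)$ of \Cref{SRtoRC}, which requires a ramification sequence supported in $\supp(f)\cup\{\infty\}$ and taking value $0$ or $\{c\}$ at $\infty$, where $c=\lc(f)$. The difficulty, which I expect to be the crux, is that $\partial(\s)$ need not be so supported, so the ramification must first be cleaned up without destroying the slot $f$.

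The key local input is \Cref{LGPlocalslot}. For $p\in\mc P\setminus\supp(f)$ the element $f$ is a unit at $v_p$, and local isotropy of $\vf$ over $E(X)_p$ gives $\partial_p(\s)\in\{0,\{\ovl f\}\}$ in $\k_1E_p$. The finitely many primes where this residue is nonzero form a set $P$, and I would cancel them by replacing $\s$ with $\s'=\s-\{f,h\}$ for $h=\prod_{p\in P}p$: since $\partial_p(\{f,h\})=v_p(h)\{\ovl f\}$ for $p\nmid f$, this forces $\partial_p(\s')=0$ for all $p\in\mc P\setminus\supp(f)$, so that $\rho:=\partial(\s')$ is supported in $\supp(f)\cup\{\infty\}$; note $\s'$ still has $f$ as a local slot everywhere, because $\{f,h\}$ does. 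It then remains to arrange the value at $\infty$. If $\deg(f)$ is even, then at $v_\infty$ one has $f\equiv w\bmod\sq{E(X)_\infty}$ for a unit $w$ with $\ovl w=c$, and applying \Cref{LGPlocalslot} to $\s'$ at $v_\infty$ (using the isotropy coming from the slot $w$) yields $\rho_\infty\in\{0,\{c\}\}$. If $\deg(f)$ is odd, then $f$ is a uniformiser at $v_\infty$, and I would instead multiply $h$ by a suitable constant $\lambda\in\mg E$, using that $\partial_\infty(\{f,\lambda\})=\{\lambda\}$ when $v_\infty(f)$ is odd and that $\{\cdot\}\colon\mg E\to\k_1E$ is surjective, to force $\rho_\infty=0$; this changes nothing at the finite primes outside $\supp(f)$, since $\lambda$ is a unit there.

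Having arranged $\supp(\rho)\subseteq\supp(f)\cup\{\infty\}$ with $\rho_\infty\in\{0,\{c\}\}$, square-reflexivity of $f$ through Condition $(ii)$ of \Cref{SRtoRC} provides $g'\in E[X]\setminus\{0\}$ with $\partial(\{f,g'\})=\rho=\partial(\s')$. Then $\partial(\s-\{f,hg'\})=0$, so $\s=\{f,hg'\}$ by injectivity of $\partial$, and $\vf$ is isotropic over $E(X)$ by \Cref{firstslot}. I expect the main obstacle to be exactly the control of ramification away from $\supp(f)$ and at infinity: the cancellation of the stray primes by $h$ and the parity-dependent treatment of $v_\infty$ are the steps where the degree and leading coefficient of $f$ must be matched against the precise shape of Condition $(ii)$ in \Cref{SRtoRC}.
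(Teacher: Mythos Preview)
Your proposal is correct and follows essentially the same strategy as the paper: reduce to showing that $f$ is a slot of $\sigma=\{a,b\}$, use \Cref{LGPlocalslot} to see that the ramification of $\sigma$ at primes outside $\supp(f)$ is either $0$ or $\{\ovl f\}$, subtract a symbol $\{f,h\}$ with $h$ a product of such primes to clean up the support, then invoke square-reflexivity via Condition~$(ii)$ of \Cref{SRtoRC} and conclude by injectivity of $\partial$.

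The only notable difference is bookkeeping at $v_\infty$. In the even-degree case the paper applies \Cref{LGPlocalslot} to the genuine symbol $\sigma=\{g,h\}$ and computes $\partial_\infty(\{f,\mathsf P_S\})$ directly; you instead apply \Cref{LGPlocalslot} to $\sigma'$, which is not a priori a symbol in $\k_2E(X)$. Your argument is still fine, since over $E(X)_\infty$ the hypothesis $u(E)\leq 2$ and \Cref{localstronglinkage} make $\sigma'$ a symbol with slot $w$, and $\partial_\infty$ is compatible with passage to the completion---but you should say this. Alternatively (and more simply), note that $\partial_\infty(\sigma)\in\{0,\{c\}\}$ by \Cref{LGPlocalslot} applied to $\sigma$ with $d=fX^{-\deg f}$, and that $\partial_\infty(\{f,h\})=\{c^{\deg h}\}\in\{0,\{c\}\}$ since $h$ is monic and $\deg f$ is even; the difference then also lies in the subgroup $\{0,\{c\}\}$. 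In the odd-degree case your adjustment by a constant $\lambda\in\mg E$ is a clean variant of the paper's choice of $(-1)^{\deg(\mathsf P_S)}c'\mathsf P_S$.
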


\begin{proof}
By \Cref{u2}, the hypothesis implies that $\k_2E=0$.
It follows by the exact sequence (\ref{MES}) that 
$\partial:\k_2E(X)\to \mf R_2(E)$ is injective.

Let $\varphi$ be a $4$-dimensional form over $E(X)$ of determinant $f\sq{E(X)}$. 
Then~$\varphi$ is similar to
$\la f, -g,-h,gh\ra$ 
for certain square-free polynomials $g,h \in E[X]$. Assume that $\varphi$ is isotropic over $E(X)_p$ for all $p \in \mc P'$. 
Set $S = \supp(\partial(\{g,h\}))\setminus \{\infty\}$ and $\mathsf{P}_{S} = \prod_{p\in S}p$. 
Let $c' \in \mg E$ be such that $\partial_\infty (\{g,h\}) = \{c'\}$ in $\k_1E_\infty$.
We set
$$ \alpha  = \left\{\begin{array}{ll}
           \{f, (-1)^{\deg(\mathsf{P}_S)}c'\mathsf{P}_S\} &\mbox{if $\deg(f)$ is odd},\\
           \{f, \mathsf{P}_S\} & \mbox{if $\deg(f)$ is even}.
           \end{array}\right.$$
Now we consider
$$ \rho  = \partial(\alpha -\{g,h\})\,.$$

For any $p \in \mc P\setminus (S \cup \supp(f))$ we have $\rho_p =0 $. 
For any $p \in S \setminus \supp(f)$, it follows by \Cref{LGPlocalslot} that
$$\partial_p ( \{g, h \}) = \{\ovl {f } \} =\partial_p (\alpha) \,,$$
and therefore $\rho_p =0$. 
This shows that 
$$\supp(\rho)\subseteq\supp(f)\cup\{\infty\}\,.$$

Let $c=\lc(f)$. Then 
$$ \partial_{\infty}(\alpha)  = \left\{\begin{array}{ll}
           \{c'c^{\deg(\mathsf{P}_S)}\} &\mbox{if $\deg(f)$ is odd},\\
           \{c^{\deg(\mathsf{P}_S)}\} & \mbox{if $\deg(f)$ is even}.
           \end{array}\right.$$ 
Moreover, if $\deg(f)$ is even and $\partial_\infty(\{g,h\}) \neq 0$, then it follows by \Cref{LGPlocalslot} that $\{c\}=\partial_\infty(\{g,h\}) = \{c'\}$ in $\k_1E_\infty$. 
Therefore in all cases we have either $\rho_\infty = 0$ or $\rho_\infty = \{c\}$ in $\k_1E_\infty$.
 
Since $f$ is square-reflexive, it follows by \Cref{{SRtoRC}} that $\rho=\partial(\{f,q\})$ for some polynomial $q \in E[X]\setminus\{0\}$. 
Using that $\partial$ is injective, we conclude that
$$\{g,h\} = \left\{\begin{array}{ll}
           \{f, (-1)^{\deg(\mathsf{P}_S)}c'q\mathsf{P}_S\} &\mbox{if $\deg(f)$ is odd},\\
           \{f, q \mathsf{P}_S\} & \mbox{if $\deg(f)$ is even}.
           \end{array}\right.$$
Hence $f$ is a slot of $\{g,h\}$. 
We conclude by \Cref{firstslot} that $\varphi$ is isotropic.
\end{proof}

\begin{rem}\label{R:refine}
The proof of \Cref{monicLGPI} shows the following (under the same assumptions on $E$):
If $f\in E[X]$ is square-reflexive, $g,h\in E[X]\setminus\{0\}$ and the quadratic form 
$\vf=\la f,-g,-h,gh\ra$ is anisotropic over $E(X)$, then $\vf$ is anisotropic over $E(X)_p$ for some $p\in\supp(\partial(\{g,h\}))$. 
\end{rem}

\Cref{monicLGPI} has the following remarkable consequences.

\begin{cor}\label{C:finite-LGPI}
Assume that $E$ is a finite field of odd cardinality.
Then $4$-dimen\-sional quadratic forms over $E(X)$ satisfy the local-global principle for isotropy with respect to $\Omega_{E(X)/E}$.
\end{cor}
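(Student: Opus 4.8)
The goal is to deduce \Cref{C:finite-LGPI} from \Cref{monicLGPI} together with the structural results already established. The plan is to first reduce the general local-global statement to the case of $4$-dimensional forms, and then to invoke \Cref{monicLGPI} once we have verified its hypotheses for the base field $E$.

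First I would check the hypotheses of \Cref{monicLGPI}. Since $E$ is a finite field of odd cardinality, it satisfies $u(E)\leq 2$ (indeed $u(E)=2$ for finite fields of odd order), so the condition $u(E)\leq 2$ is met. Moreover, by \Cref{FFSR}, every square-free polynomial in $E[X]$ is square-reflexive, so $E$ is a square-reflexive field. These are precisely the ingredients needed to apply \Cref{monicLGPI} to an arbitrary square-free $f\in E[X]$.

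Next I would argue that it suffices to treat $4$-dimensional forms. The local-global principle for forms of dimension at most $3$ over $E(X)$ is already available from \Cref{P:123-lgp}. For forms of dimension at least $5$, one uses that $u(E(X))\leq 4$: since $E$ is square-reflexive (hence weakly square-reflexive), \Cref{T:sqf-div-sqref} gives $u(E(X))\leq 4$, so every form of dimension at least $5$ over $E(X)$ is already isotropic over $E(X)$ itself, and the local-global principle holds trivially in those dimensions. Thus the only remaining case is dimension exactly $4$.

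Finally, for a $4$-dimensional form $\vf$ over $E(X)$, I would write its determinant as $f\sq{E(X)}$ for a square-free polynomial $f\in E[X]$ (possibly of degree $0$, in which case $f\in\mg E$); this is always possible since every square class in $\mg{E(X)}$ is represented by a square-free polynomial. By \Cref{FFSR}, $f$ is square-reflexive, so \Cref{monicLGPI} applies and yields that $\vf$ satisfies the local-global principle for isotropy with respect to $\Omega_{E(X)/E}$. Since the valuations in $\Omega_{E(X)/E}$ are exactly those $\zz$-valuations that are trivial on $E$, and since $E$ is finite so that every $\zz$-valuation on $E(X)$ is automatically trivial on $E$, this completes the argument. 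I do not anticipate a genuine obstacle here: the statement is essentially an assembly of \Cref{P:123-lgp}, \Cref{FFSR}, \Cref{T:sqf-div-sqref} and \Cref{monicLGPI}, with the only point requiring a moment's care being the verification that the dimension $\geq 5$ case is covered by the bound $u(E(X))\leq 4$ and that the degenerate case $f\in\mg E$ is subsumed under square-reflexivity via \Cref{SRuptodeg2}.
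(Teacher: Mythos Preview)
Your core argument is correct and matches the paper's proof exactly: every $4$-dimensional form over $E(X)$ has determinant represented by a square-free polynomial $f\in E[X]$, which is square-reflexive by \Cref{FFSR}, so \Cref{monicLGPI} applies (using $u(E)\leq 2$). Note, however, that \Cref{C:finite-LGPI} is stated only for $4$-dimensional forms, so your reduction step treating dimensions $\leq 3$ and $\geq 5$ via \Cref{P:123-lgp} and \Cref{T:sqf-div-sqref} is superfluous here---that packaging of the full local-global principle is carried out separately in \Cref{LGPI4}.
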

\begin{proof}
By \Cref{FFSR} square-free polynomials in $E[X]$ are square-reflexive. 
As the determinant of every $4$-dimensional form over $F$ is given by the square-class of a square-free polynomial in $E[X]$, the statement follows by \Cref{monicLGPI}.
\end{proof}

The conclusion in \Cref{monicLGPI} does not hold if we assume that $f\in E[X]$ is square-reflexive but drop the conditions on $E$.
We give two examples where the determinant is given by a separable quadratic polynomial. Recall that square-free quadratic polynomials are square-reflexive, by \Cref{SRuptodeg2}.

\begin{exs}
\begin{enumerate}[$(1)$]
\item E.~Witt showed in \cite{Witt34} that $-1$ is not a sum of two squares in $F'=\qq(X)(\sqrt{-(X^2+21)})$, whereas it is a sum of two squares in every completion of $F'$ with respect to a real or a non-archimedian place.
It follows that the $2$-fold Pfister form $\lla -1,-1\rra$ over $F'$ is anisotropic but becomes isotropic over $F'_{v}$ for every $v\in\Omega_{F'}$.
Using \Cref{firstslot}, it follows that the form $\la -(X^2+21),1,1,1\ra$ over $F=\qq(X)$ is anisotropic and that it becomes isotropic over 
$F_v$ for every $v\in\Omega_{F}$.
\item Using the same arguments based on the idea in \cite{Witt34}, one can show that the quadratic form $\la 5X^2+13, \sqrt{-1},5,5\sqrt{-1}\ra$ over $F=\qq(\sqrt{-1})(X)$ is anisotropic and it becomes isotropic over $F_v$ for every $v\in\Omega_{F}$. Note that $u(\qq(\sqrt{-1}))=4$.
\end{enumerate}
\end{exs}

\Cref{monicLGPI} can be used to show that certain polynomials are not square-reflexive.

\begin{ex}\label{E:cubic}
Let $E=\cc(\!(t)\!)$.
It follows by \Cref{P:sqref-deg3} that the square-free polynomial 
$$f=X(X+1)(X+t)\in E[X]$$
is not square-reflexive.
We now give an alternative argument for this fact, which uses \Cref{monicLGPI}.
Consider the quadratic form 
$$\varphi= \la t,tX,X+1,X+t\ra$$ 
over the field $F=E(X)$.
Note that $\vf$ has determinant $f\sq{F}$ and $F$ is a subfield of $\cc(X)(\!(t)\!)$.
Using Springer's Theorem (\Cref{Springer}), it is easy to see that $\varphi$ becomes isotropic over $F_v$ for every $v\in\Omega_{F/E}$, while it is anisotropic over $\cc(X)(\!(t)\!)$, and hence also over $F$.
Hence the local-global principle for isotropy with respect to $\Omega_{F/E}$ for $4$-dimensional quadratic forms of determinant $f\sq{F}$ is not satisfied. 
By \Cref{monicLGPI}, this implies that $f$ is not square-reflexive over~$E$.
(Note however that quadratic forms over $F$ do satisfy the local-global principle for isotropy with respect to $\Omega_F$, the set of all $\zz$-valuations on $F$, by \cite[Theorem~3.1]{CTPS}.)
\end{ex}

We do not know whether the converse to the implication in \Cref{monicLGPI} holds for an individual square-free polynomial:
 
\begin{qu}
Assume that $u(E)\leq 2$ and let $f\in E[X]$ be such that every anisotropic $4$-dimensional form of determinant $f\sq{E(X)}$ over $E(X)$ remains anisotropic over $E(X)_v$ for some $v\in\Omega_{E(X)/E}$. Does it follow that $f$ is square-reflexive?
\end{qu}

We can take this conclusion if we assume that all $4$-dimensional quadratic forms over $E(X)$ satisfy the local-gobal principle for isotropy with respect to $\Omega_{E(X)/E}$.
To show this is the purpose of the rest of this section.

The following lemma is a variation of \cite[Proposition 3.4]{BR}. 

\begin{lem} \label{bezutian}
Let $f \in E[X]$ be monic square-free and let $g\in E[X]$ be coprime to $f$. Assume that $\deg(f)\geq u(E)$. Then there exists a monic square-free polynomial $g^\ast \in E[X]$ such that $\deg(g^\ast)<\deg(f)$, $\deg(g^\ast) \equiv\deg (f)-1\bmod{2}$ and $gg^\ast$ is a square modulo $f$.
\end{lem}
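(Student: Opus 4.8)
The plan is to read the divisibility condition in terms of square classes in $E_f$ and to reduce the statement to a single representation property of a quadratic form over $E$ of dimension $\deg(f)$. Since $g$ is coprime to $f$, its image in $E_f$ is a unit, and the condition that $gg^\ast$ be a square modulo $f$ says precisely that $g^\ast$ lies in the square class of $g$ in $\mg{E_f\!\!}$. Writing $n=\deg(f)$, I would introduce the nonzero $E$-linear functional $s\colon E_f\to E$ sending an element to the coefficient of $X^{n-1}$ of its representative of degree $<n$; thus $s(\beta)$ is the leading coefficient of that representative exactly when the latter has degree $n-1$. In the basis $1,\theta,\dots,\theta^{n-1}$, with $\theta$ the class of $X$, the symmetric bilinear form $(a,b)\mapsto s(ab)$ has an anti-triangular Gram matrix with $1$'s on the anti-diagonal, hence is nondegenerate, and no separability hypothesis on $f$ is needed for this. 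Consequently, as $g$ is a unit, the Bezoutian form $q_g(a)=s(ga^2)$ is a regular quadratic form of dimension $n$ over $E$.

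The engine of the argument is the hypothesis $\deg(f)\ge u(E)$. It gives $\dim(\la -1\ra\perp q_g)=n+1>u(E)$, so this form is isotropic; hence there is $(\mu,a_0)\neq 0$ with $q_g(a_0)=\mu^2$, and in either case ($\mu\neq 0$, or $\mu=0$ and $q_g$ isotropic, hence universal) the form $q_g$ represents a nonzero square, which I again call $\mu^2$. I would arrange the representing vector $a_0$ to be a unit of $E_f$ (see the obstacle below). Setting $g^\ast_0=ga_0^2\bmod f$, the choice of $s$ forces $g^\ast_0$ to have degree exactly $n-1$ with leading coefficient $\mu^2$; normalising, $g^\ast:=\mu^{-2}g^\ast_0$ is monic of degree $n-1$, lies in the square class of $g$ (so $gg^\ast\equiv(\mu^{-1}ga_0)^2\bmod f$ is a square modulo $f$), and satisfies $\deg(g^\ast)=n-1<n$ with the required parity $\deg(g^\ast)\equiv\deg(f)-1\bmod 2$. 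This reproves, in a form adapted to our needs, the monic existence statement of \cite[Proposition~3.4]{BR}.

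It then remains to make $g^\ast$ square-free. I would factor $g^\ast=wh^2$ with $w$ its monic square-free part. Because $a_0$ is a unit, $g^\ast_0$ and hence $g^\ast$ are units of $E_f$, so any common factor of $w$ or $h$ with $f$ would divide $g^\ast$; thus $w$ and $h$ are coprime to $f$, whence $w$ and $g^\ast$ have the same square class in $\mg{E_f\!\!}$ and $gw$ remains a square modulo $f$. Moreover $\deg(g^\ast)-\deg(w)=2\deg(h)$ is even, so $\deg(w)\equiv n-1\equiv\deg(f)-1\bmod 2$, while $w$ is monic with $\deg(w)\le n-1<n$. Replacing $g^\ast$ by $w$ completes the proof.

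The step I expect to be the main obstacle is the interaction between the discrete targets (monic, square-free, prescribed degree parity) and the square class modulo $f$; the device that resolves it is to produce a representative of degree exactly $n-1$, since such a degree automatically has the parity $\deg(f)-1$, and for this the non-strict bound $\deg(f)\ge u(E)$ is exactly what is needed after augmenting the $n$-dimensional form $q_g$ by $\la -1\ra$. The residual subtlety is to guarantee that $a_0$ is a unit, for otherwise passage to the square-free part could alter the square class at a prime $p\mid f$ dividing $g^\ast$ to an even order. By the Chinese Remainder Theorem the non-units of $E_f$ form the union of the finitely many proper $E$-subspaces obtained as kernels of the projections $E_f\to E_p$ for the irreducible factors $p$ of $f$; over an infinite field $E$ the affine quadric $\{q_g=\mu^2\}$ is not contained in this union, so a unit representative exists, and the finite-field case, where $u(E)=2$, is settled by a direct representation count.
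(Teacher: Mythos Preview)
Your argument is the paper's: the same linear functional $s$, the same Bezoutian form $q_g(x)=s(g(\theta)x^2)$, the same use of $\deg(f)\ge u(E)$ to make $q_g$ represent $1$, and the same passage to the monic square-free part. Where you go beyond the paper is in insisting that the representing vector $a_0$ be a unit of $E_f$; the paper simply writes $g'=g^\ast h^2$ and asserts that $gg^\ast$ is a square modulo $f$, leaving the case where $h$ and $f$ share a factor unaddressed. Your density argument for infinite $E$ is sound.

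The gap is your finite-field step. Take $E=\ff_3$, $f=X(X-1)(X+1)$, $g=1$. Every element of $\mg{\ff_3}$ squares to $1$, so every unit $a_0$ of $E_f\cong\ff_3\times\ff_3\times\ff_3$ satisfies $a_0^{2}=1$ in $E_f$, whence $q_g(a_0)=s(1)=0$. Thus no unit has $q_g(a_0)\in\sq{E}=\{1\}$, and no representation count will manufacture one. The lemma nonetheless holds here (take $g^\ast=1$), but not through your mechanism of a unit vector yielding a polynomial of degree exactly $n-1$. So for finite $E$ you need a different argument---either a direct construction when the field is small, or a way to repair the square-free reduction at primes dividing $\gcd(h,f)$ without forcing $a_0$ to be a unit.
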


\begin{proof}
Let $n=\deg(f)$.
Recall that $E_f = E[X]/(f)$ and $\dim_E E_f = n$. Let $\theta = X+(f)$ in $E_f$. 
As $g$ is coprime to $f$ we have $g(\theta)\in\mg{E_f\!\!}$.
Consider the $E$-linear map $s_f : E_f \rightarrow E$, given by $s_f(\theta^{n-1}) =1$ and $s_f(\theta^{i}) = 0$ for $0\leq i \leq n-2$. 
Let $\varphi$ be the quadratic form over $E$ given by the quadratic map 
$$ E_f  \rightarrow E, \,\,x \mapsto s_f(g(\theta) x^2)\,.$$
By \cite[Proposition 3.1]{BR}, $\varphi$ is a regular quadratic form. Since $\dim(\varphi) = \deg(f)$ and $\deg(f)\geq u(E)$, we obtain that $\varphi$ represents $1$ over $E$. 
Let $x\in E_f$ be such that $\varphi (x) =1$, whereby $s_f(g(\theta)x^2) =1$.  
Hence there exist $a_0, \ldots, a_{n-2}\in E$ such that $$g(\theta)x^2 =a_0 +\ldots+a_{n-2}\theta^{n-2}+ \theta^{n-1}\,.$$
Set $g'(X) = a_0 +\ldots+a_{n-2}X^{n-2}+ X^{n-1}$. 
Then $g(\theta)g'(\theta) =(g(\theta)x)^2$ in $E_f$, whereby $gg'$ is a square modulo $f$. 
In the factorial domain $E[X]$ we may write $g' = g^\ast h^2$ for some $g^\ast,h \in E[X] $ such that $g^\ast$ is monic and square-free. Then $gg^\ast$ is a square modulo $f$, and furthermore $n-1 =\deg(g')\equiv \deg(g^\ast)\bmod 2$.     
\end{proof}

\begin{lem}\label{L:main}
Assume that $u(E')\leq 2$ for all finite field extensions $E'/E$.
Let $f\in E[X]$ be square-free, not square-reflexive, and of minimal degree for these properties.
Then there exists an anisotropic $4$-dimensional form of determinant $f\sq{E(X)}$ over $E(X)$ which is isotropic over $E(X)_v$ for every $v\in\Omega_{E(X)/E}$.
\end{lem}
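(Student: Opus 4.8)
The plan is to convert the failure of square-reflexivity of $f$ into a symbol $\{g,h\}$ whose ramification cannot be matched by $\partial(\{f,\cdot\})$, and then to realise this obstruction as an anisotropic $4$-dimensional form $\la f,-g,-h,gh\ra$ that is everywhere locally isotropic. Since $u(E')\leq 2$ for all finite extensions $E'/E$, we have $\k_2E=0$, so $\partial\colon\k_2E(X)\to\mf R_2(E)$ is injective by the exact sequence \eqref{MES}, and each completion $E(X)_p$ is a complete nondyadic $\zz$-valued field with residue field $E_p$ satisfying $u(E_p)\leq 2$. First I would use the failure of square-reflexivity via Condition $(ii)$ of \Cref{SRtoRC}: there is a ramification sequence $\rho\in\mf R_2(E)$ with $\supp(\rho)\subseteq\supp(f)\cup\{\infty\}$ and $\rho_\infty\in\{0,\{c\}\}$ (where $c=\lc(f)$) that is \emph{not} of the form $\partial(\{f,g\})$ for any $g$; the goal is to produce from $\rho$ a pair $g,h$ with $\partial(\{g,h\})=\rho$, so that $f$ fails to be a slot of $\{g,h\}$.

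The key construction is to write $\rho=\partial(\{f,g_0\})+(\text{correction})$ is \emph{not} available by hypothesis, so instead I would realise $\rho$ as the ramification of a symbol whose slots are \emph{not} $f$. Concretely, choose $\alpha\in\mg{E_f}$ with $\rho_p=\{\alpha_p\}$ for $p\in\supp(f)$, lift to $g\in E[X]$, and then invoke \Cref{bezutian}: after possibly enlarging $f$ by the minimality hypothesis (here $\deg(f)\geq u(E)$ may need to be arranged, or handled via small-degree cases from \Cref{SRuptodeg2}), produce a companion $g^\ast$ so that $gg^\ast$ is a square modulo $f$ with controlled degree parity. The point of \Cref{bezutian} is to arrange a second polynomial $h$ such that $\partial(\{g,h\})=\rho$ exactly, while keeping $\{g,h\}$ \emph{not} equal to any $\{f,\cdot\}$; the minimality of $\deg(f)$ ensures that all the auxiliary polynomials of smaller degree appearing in the construction \emph{are} square-reflexive, which is what lets one control their local behaviour and conclude that the obstruction genuinely lives at $f$.

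Given such $g,h$, I would set $\vf=\la f,-g,-h,gh\ra$, which has determinant $f\sq{E(X)}$. For isotropy over each $E(X)_p$: at $p\notin\supp(f)\cup\supp(\partial(\{g,h\}))$ the relevant residue forms are isotropic since $u(E_p)\leq 2$; at $p\in\supp(f)$, here $v_p(f)$ is odd and $u(E_p)\leq 2$, so \Cref{four-dimensionlocal} gives isotropy directly; at the remaining $p\in\supp(\partial(\{g,h\}))\setminus\supp(f)$, \Cref{LGPlocalslot} and \Cref{localstronglinkage} force $\partial_p(\{g,h\})=\{\ovl f\}$, which is precisely the condition making $f$ a local slot of $\{g,h\}$ and hence $\vf$ locally isotropic; the place $\infty$ is handled by the parity bookkeeping on $\rho_\infty$. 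For \emph{global} anisotropy one argues by contradiction: if $\vf$ were isotropic over $E(X)$, then $f$ would be a slot of $\{g,h\}$ by \Cref{firstslot}, giving $\{g,h\}=\{f,d\}$ for some $d$, whence $\rho=\partial(\{g,h\})=\partial(\{f,d\})$, contradicting the choice of $\rho$.

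The main obstacle I anticipate is the \emph{global anisotropy} together with the correct choice of $g,h$: one must ensure simultaneously that $\partial(\{g,h\})=\rho$ holds with $\supp$ confined to $\supp(f)\cup\{\infty\}$ plus an auxiliary set on which $f$ is a local square, and that $f$ is genuinely not a slot. The minimality hypothesis on $\deg(f)$ is the crucial lever: it guarantees that every proper square-free factor or smaller-degree auxiliary polynomial \emph{is} square-reflexive, so that any attempt to represent $\rho$ through a smaller polynomial succeeds and can be absorbed, isolating the obstruction at $f$ itself. Balancing the parity conditions at $\infty$ (depending on whether $\deg(f)$ is even or odd, mirroring the case split in the proof of \Cref{monicLGPI}) is the delicate bookkeeping step, and making \Cref{bezutian} yield a companion with the exact required ramification rather than merely a square-class identity is where the technical care concentrates.
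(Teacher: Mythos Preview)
Your high-level strategy and the tools you cite are the right ones, and your global-anisotropy argument via \Cref{firstslot} is exactly how the contradiction is obtained. The genuine gap is in the construction: you set as your target a \emph{single} symbol $\{g,h\}$ with $\partial(\{g,h\})=\rho$, but you never explain how to produce one, and nothing in your toolkit does this. \Cref{bezutian} only gives you a polynomial $g^\ast$ of degree $<\deg(f)$ and opposite parity lying in the square-class of your lift in $E_f$; it does not hand you a second slot $h$, and the minimality hypothesis gives square-reflexivity only for polynomials of degree $<\deg(f)$, which lets you realise ramification sequences supported on \emph{their} supports---not on $\supp(f)$. Since $\k_2E=0$ here, $\partial$ is injective, so $\rho$ has a \emph{unique} preimage $\xi\in\k_2E(X)$, and there is no reason a priori for $\xi$ to be a symbol; indeed, knowing that every element of $\k_2E(X)$ is a symbol would already force $u(E(X))\leq 4$, which is part of what the whole machinery is meant to establish.

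The paper sidesteps this by \emph{not} aiming for $\partial(\{g,h\})=\rho$. It uses \Cref{bezutian} to produce a monic square-free $g$ with $\deg(g)<\deg(f)$ and $\deg(g)\equiv\deg(f)-1\bmod 2$ such that the \emph{residual} $\rho'=\rho-\partial(\{f,g\})$ has support in $\supp(g)\cup\{\infty\}$ and $\rho'_\infty\in\{0,\{c\}\}$. Now minimality does apply: choosing $c'\in\sq{E}\cup c\sq{E}$ with $\rho'_\infty=\{c'\}$, the polynomial $c'g$ is square-reflexive, so $\rho'=\partial(\{c'g,h\})$ for some $h$. The form is $\varphi=\la f,-c'g,-h,c'gh\ra$, built from the correction symbol $\{c'g,h\}$ rather than from one with ramification $\rho$. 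Anisotropy follows because isotropy would give $\{c'g,h\}=\{f,h'\}$, whence $\rho=\partial(\{f,g\})+\partial(\{f,h'\})=\partial(\{f,gh'\})$, contradicting the choice of $\rho$. The local checks then proceed essentially as you sketch; the opposite parity of $\deg(g)$ relative to $\deg(f)$ is precisely what makes the case $p=\infty$ with $\deg(f)$ even go through.
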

\begin{proof}
By \Cref{u2}, the hypothesis implies that $\k_2 E_p=0$ for all $p\in\mc{P}'$.

Let $c=\lc(f)$.
Since $f$ is not square-reflexive, by \Cref{SRuptodeg2}, we have $\deg(f)>2$, and by \Cref{SRtoRC}, there exists  $\rho \in \mf R_2(E)$ with
$\supp (\rho) \subseteq \supp(f)\cup \{\infty\}$, either $\rho_\infty =0$ or  $\rho_\infty =\{c\}$, and
such that $\rho\neq\partial(\{f,g\})$ for every $g\in E[X]\setminus\{0\}$.

Since $\deg(f) \geq 2\geq u(E)$, \Cref{bezutian} yields that there exists a monic square-free polynomial $g\in E[X]$ with $\deg(g) < \deg(f)$ and $\deg(g) \equiv \deg(f)-1\bmod 2$ and with the property that 
$$\rho_p= \{\ovl{g}\} \quad\mbox{ in }\k_1E_p\quad\mbox{ for every }\quad p \in \supp (f)\setminus\supp(g)\,.$$ 
The choice of $g$ implies that $\partial_\infty(\{f,g\})=\{c^{\deg(g)}\}$ in $\k_1 E_\infty$. 
We set 
$$\rho' = \rho - \partial(\{f,g\})\,.$$ 
We obtain that $\supp(\rho') \subseteq \supp(g) \cup \{\infty\}$
and either $\rho_\infty'=0$ or $\rho_\infty'=\{c\}$.
Let $c'\in\sq{E}\cup c\sq{E}$ be such that $\rho_\infty'=\{c'\}$.
Since $\deg(g) < \deg (f)$, the hypothesis on $f$ yields that $c'g$ is square-reflexive.
Hence we may choose $h \in E[X] \setminus \{0\}$ such that $\rho' = \partial (\{c'g,h\})$.
We thus have $$\rho \, = \, \partial(\{f,g\}+\{c'g,h\})\,.$$
We now consider the quadratic form 
$$\varphi = \la f ,-c'g ,-h, c'gh \ra$$
over $E(X)$, whose determinant is $f\sq{E(X)}$. 

Suppose that $\varphi$ is isotropic over $E(X)$.
It follows by \Cref{firstslot} that there exists $h'\in E[X]\setminus\{0\}$ such that $\{c'g,h\}=\{f,h'\}$,
whereby
$$\rho \, = \, \partial(\{f,g\}+\{f,h'\})=\partial(\{f,gh'\})\,,$$
in contradiction to the choice of $\rho$.
Hence  $\varphi$ is anisotropic over $E(X)$.

It remains to show that $\varphi$ is isotropic over $E(X)_p$ for every $p\in \mc P'$. 

Consider first $p \in \mc P\setminus \supp(f)$. 
As $v_p(f) =0 = v_p(c')$ and $u(E_p)\leq 2$, it follows by \Cref{localstronglinkage} that in $\k_2E(X)_p$ we have $\{f,c'\} =0$ and thus $\{f,g\} = \{f,c'g\}$. Hence
$$\partial_p(\{fh, c'g\})=\rho_p  = 0 \mbox{ in $\k_1E_p$}\,.$$ 
Again by \Cref{localstronglinkage} it follows that $ \{ fh, c'g\} =0$ in $\k_2E(X)_p$, and in view of \Cref{firstslot} we obtain that $\la -fh,-c'g,c'fgh\ra$  is isotropic over $E(X)_p$.
Since this form is a subform of $(-c'fg)\varphi$, we conclude that $\varphi$ is isotropic over $E(X)_p$.

Consider next $p\in \supp(f)$. Since $f$ is square-free, we have $v_p(f) =1$, and since $u(E_p) \leq 2$, it follows by \Cref{four-dimensionlocal} that $\varphi$ is isotropic over $E(X)_p$. 

We come to the last and most tricky case where $p =\infty$. Note that by definition $v_\infty(f)=-\deg(f)$.
If $\deg(f)$ is odd, then since $u(E_\infty) \leq 2$ it follows by \Cref{four-dimensionlocal} that $\varphi$ is isotropic over $E(X)_\infty$. Assume now that $\deg(f)$ is even. In that case we have that $\deg(g)$ is odd, by the choice of $g$.

Note that $\partial_\infty (\{c'g,h\}) = \rho'_\infty$.
Suppose first that $\rho_\infty'=0$. 
Then it follows by part $(c)$ of \Cref{localstronglinkage} that $\{c'g,h\} =0$ in $\k_2E(X)_\infty$, and hence by \Cref{P:symbol-zero} that $\la -c'g,-h,c'gh\ra$ is isotropic over $E(X)_\infty$. 
Since $\la -c'g,-h,c'gh\ra$ is a subform of $\varphi$, we obtain that $\varphi$ is isotropic over $E(X)_\infty$.

Suppose now that $\rho_\infty'\neq 0$. 
As $\rho_\infty'=\{c'\}$ and $c'\in\sq{E}\cup c\sq{E}$, it follows that 
$c\notin\sq{E}$, $cc'\in \sq{E}$ and
$0\neq \{c\}=\rho'_\infty =\partial_\infty (\{cg,h\})$.
Recall that $\deg(f)$ is even, $\deg(g)$ is odd and $\lc(f)=c$.
If $\deg(h)$ is even, then we conclude that $\lc(h)\in c\sq{E}$, whereby $\la f,-h\ra$ is isotropic over $E(X)_\infty$.
If $\deg(h)$ is odd, then we conclude that $\lc(gh)\in-\sq{E}$, whereby $\la f , c'gh \ra$ is isotropic $E(X)_\infty$.
In either case we obtain that $\varphi$ is isotropic over $E(X)_\infty$.
\end{proof}

We come to the main result of this section.

\begin{thm}\label{LGPI4}
The following are equivalent:
\begin{enumerate}[$(i)$]
\item $E$ is square-reflexive.
\item $u(E(X))\leq 4$ and 
 quadratic forms over $E(X)$ satisfy the local-global principle for isotropy with respect to $\Omega_{E(X)/E}$.
\end{enumerate}
\end{thm}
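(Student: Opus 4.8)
The plan is to prove the equivalence by combining the results already established for individual square-reflexive polynomials with the minimal-degree counterexample construction of \Cref{L:main}. Before splitting into the two implications, I would first record that $(i)$ forces $u(E)\leq 2$: if $E$ is square-reflexive, then in particular every monic square-free polynomial of degree $4$ is square-reflexive (hence divides itself), so \Cref{L:sqr-deg4-k2triv} gives $u(E)\leq 2$, and by \Cref{u2} this means $\k_2 E=0$ and $E$ is nonreal. Since $u(E')\leq 2$ holds for $E'=E$, and square-reflexivity passes to every finite extension of $E$ (the defining first-order condition is inherited, or one checks directly that the hypotheses are preserved), I also get $u(E')\leq 2$ for all finite $E'/E$, which is exactly the hypothesis needed to invoke \Cref{L:main}.

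\follows{i}{ii} Assume $E$ is square-reflexive. By the above, $u(E)\leq 2$. For the local-global principle, let $\varphi$ be any anisotropic form over $E(X)$. If $\dim(\varphi)\leq 3$, then \Cref{P:123-lgp} already gives that $\varphi$ stays anisotropic over some completion. If $\dim(\varphi)=4$, write $\varphi$ up to similarity as $\la f,-g,-h,gh\ra$ where $f$ represents the determinant by a square-free polynomial; since $f$ is square-reflexive by hypothesis, \Cref{monicLGPI} applies and yields the local-global principle for $\varphi$. To reach all dimensions and simultaneously obtain $u(E(X))\leq 4$, the cleanest route is to observe that $E$ square-reflexive implies $E$ weakly square-reflexive, so \Cref{T:sqf-div-sqref} gives $u(E(X))\leq 4$ directly; thus every anisotropic form over $E(X)$ has dimension at most $4$ and the cases $\dim\leq 4$ treated above cover everything.

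\follows{ii}{i} For the converse I argue by contraposition: suppose $E$ is \emph{not} square-reflexive. First I must secure the hypothesis of \Cref{L:main}, namely $u(E')\leq 2$ for all finite $E'/E$. This is where condition $(ii)$ enters decisively: the bound $u(E(X))\leq 4$ together with the fact that each residue field $E_p$ of a nondyadic $\zz$-valuation on $E(X)$ arises as a finite extension of $E$ lets me apply \Cref{L:rat-res-field-u} (with $F=E(X)$, whose $u$-invariant is below $8$) to conclude $u(E_p)\leq 2$ for every $p\in\mc P'$; since every finite extension of $E$ is realised as such a residue field, this gives $u(E')\leq 2$ for all finite $E'/E$. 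Now choose a square-free non-square-reflexive polynomial $f$ of minimal degree and apply \Cref{L:main}: it produces an anisotropic $4$-dimensional form over $E(X)$ of determinant $f\sq{E(X)}$ that is isotropic over $E(X)_v$ for every $v\in\Omega_{E(X)/E}$. This form is a direct violation of the local-global principle in $(ii)$, completing the contrapositive.

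The main obstacle I anticipate is the passage, in the $(ii)\Rightarrow(i)$ direction, from the global hypothesis $u(E(X))\leq 4$ to the local hypothesis $u(E')\leq 2$ for all finite extensions $E'/E$ that \Cref{L:main} demands; getting this right requires invoking \Cref{L:rat-res-field-u} with care and knowing that every finite extension of $E$ occurs as a residue field $E_p$ (which holds because any finite $E'/E$ is $E[X]/(p)$ for a suitable monic irreducible $p$). A secondary technical point is ensuring that square-reflexivity of $E$ genuinely transfers to finite extensions $E'/E$ in the $(i)\Rightarrow(ii)$ direction, which I would justify either by the first-order description in \Cref{R:srf-1st-order} or, more concretely, by noting that the only property of $E$ actually used in the forward direction is $u(E)\leq 2$ together with square-reflexivity of the specific determinant polynomials, both of which are available directly.
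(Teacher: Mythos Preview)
Your approach is essentially the same as the paper's: for $(i)\Rightarrow(ii)$ you use \Cref{T:sqf-div-sqref} to get $u(E(X))\leq 4$, then \Cref{monicLGPI} and \Cref{P:123-lgp} for the local-global principle; for $(ii)\Rightarrow(i)$ you use \Cref{L:rat-res-field-u} to secure $u(E')\leq 2$ for all finite $E'/E$ and then invoke \Cref{L:main}. This is exactly how the paper proceeds.

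One correction to your preamble, however: the claim that ``square-reflexivity passes to every finite extension of $E$ (the defining first-order condition is inherited)'' is wrong as stated. Being expressible by a first-order theory does \emph{not} imply transfer to finite extensions; that is simply not how first-order properties behave. Fortunately, as you yourself note in your closing paragraph, this assertion is never used: the forward direction needs only $u(E)\leq 2$ (from \Cref{L:sqr-deg4-k2triv}) together with square-reflexivity of the individual determinant polynomials over $E$, and the backward direction derives the required $u(E')\leq 2$ for all finite $E'/E$ from the hypothesis $u(E(X))\leq 4$ via \Cref{L:rat-res-field-u}, since every finite extension of $E$ is realised as some $E_p$. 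So simply delete that sentence from the preamble and the proof is clean.
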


\begin{proof} 
Assume $(i)$. Then $u(E(X))\leq 4$, by \Cref{T:sqf-div-sqref}.
Since by the assumption every square-free polynomial is square-reflexive, it follows by \Cref{monicLGPI} that $4$-dimensional quadratic forms over $E(X)$ satisfy the local-global principle for isotropy with respect to $\Omega_{E(X)/E}$.
In view of \Cref{P:123-lgp}, this establishes $(ii)$.

Assume now that $(ii)$ holds. 
By \Cref{L:rat-res-field-u}, we have
$\k_2 E_p=0$ for every $p\in\mc{P}'$. 
Hence it follows by  \Cref{L:main} that every square-free polynomial in $E[X]$ is square-reflexive, which shows $(i)$.
\end{proof}

\begin{cor}\label{C:srf-Springer}
Let $E'/E$ be a field extension which is a direct limit of finite extensions of odd degree.
If $E'$ is square-reflexive, then so is $E$.
\end{cor}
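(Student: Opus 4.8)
The plan is to reduce to the criterion of \Cref{LGPI4}: to prove that $E$ is square-reflexive it suffices to show that $u(E(X))\leq 4$ and that quadratic forms over $E(X)$ satisfy the local-global principle for isotropy with respect to $\Omega_{E(X)/E}$. Since $E'$ is square-reflexive, that same theorem applied to $E'$ hands us both facts over $E'(X)$, and the whole strategy is to descend them along the extension $E'(X)/E(X)$. The key input from quadratic form theory will be Springer's classical theorem on odd-degree field extensions, which lets anisotropy be transported back and forth across $E'(X)/E(X)$.

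First I would set up the field extension. Writing $E'=\varinjlim E_i$ with each $E_i/E$ finite of odd degree, one has $E'(X)=\varinjlim E_i(X)$, and since $E_i$ and $E(X)$ are linearly disjoint over $E$, the degree $[E_i(X):E(X)]=[E_i:E]$ is odd. Thus $E'(X)/E(X)$ is a direct limit of finite odd-degree extensions, and in particular it is algebraic. By Springer's theorem on odd-degree extensions, any anisotropic form over $E(X)$ remains anisotropic over each $E_i(X)$; and isotropy over $\varinjlim E_i(X)$ would already occur over some single $E_i(X)$, because an isotropic vector has finitely many entries lying in one $E_i(X)$. Hence anisotropy over $E(X)$ is preserved over $E'(X)$, which gives $u(E(X))\leq u(E'(X))\leq 4$ and establishes the first condition.

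For the local-global principle, let $\varphi$ be an anisotropic form over $E(X)$. By the previous paragraph it stays anisotropic over $E'(X)$, so the local-global principle for $E'$ furnishes $w\in\Omega_{E'(X)/E'}$ with $\varphi$ anisotropic over $E'(X)_w$. I would then set $v=w|_{E(X)}$. Since $w$ is trivial on $E'\supseteq E$, the valuation $v$ is trivial on $E$; and since $E'(X)/E(X)$ is algebraic, $w$ cannot restrict to the trivial valuation (a valuation trivial on a field is trivial on every algebraic extension of it), so $v$ is a nontrivial valuation trivial on $E$, that is, $v\in\Omega_{E(X)/E}$. The inclusion $E(X)\hookrightarrow E'(X)$ is then a continuous map of valued fields which extends to a field embedding $E(X)_v\hookrightarrow E'(X)_w$; consequently $\varphi$ remains anisotropic over $E(X)_v$. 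This verifies the local-global principle for $E$ with respect to $\Omega_{E(X)/E}$, and \Cref{LGPI4} then yields that $E$ is square-reflexive.

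The step requiring the most care is the handling of the valuation: one must confirm that the restriction $v=w|_{E(X)}$ is again a $\zz$-valuation lying in $\Omega_{E(X)/E}$ rather than degenerating to the trivial valuation, and that the completions embed as $E(X)_v\hookrightarrow E'(X)_w$ so that anisotropy descends. Both points are routine once one records that $E'(X)/E(X)$ is algebraic, so the only genuinely nontrivial ingredient beyond \Cref{LGPI4} is Springer's odd-degree theorem.
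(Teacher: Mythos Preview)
Your proposal is correct and follows essentially the same approach as the paper: both arguments invoke \Cref{LGPI4} for $E'$ and for $E$, transport anisotropy from $E(X)$ to $E'(X)$ via Springer's odd-degree theorem applied to the finite stages of the direct limit, and then restrict a witness valuation $w\in\Omega_{E'(X)/E'}$ to $E(X)$ using that $E'(X)/E(X)$ is algebraic. The only cosmetic difference is that the paper spells out explicitly that $w(\mg{E(X)})$ is a nontrivial subgroup of $\zz$ (hence gives a $\zz$-valuation on $E(X)$), whereas you flag this step and call it routine; either way the content is the same.
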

\begin{proof}
We use the equivalence in \Cref{LGPI4} and show that Condition $(ii)$ holds for $E$ by using that it holds for $E'$.
That is, for an arbitrary anisotropic quadratic form $\varphi$ over $E$, we show that $\dim(\vf)\leq 4$ and that there exists some $v\in\Omega_{E(X)/E}$ such that $\varphi$ remains is anisotropic over $E(X)_v$.

For any finite  extension $L/E$ contained in $E'/E$, the hypothesis on $E'/E$ implies that $[L:E]$ is odd, and as $[L(X):E(X)]=[L:E]$, it follows by Springer's Theorem \cite[Chap.~VII, Theorem 2.4]{Lam05} that $\varphi$ stays anisotropic over $L(X)$.
As $E'(X)/E(X)$ is the direct limit of $L(X)/E(X)$ where $L/E$ ranges over the finite subextensions of $E'/E$, we conclude that $\varphi$ remains anisotropic over $E'(X)$.

Since $E'$ is square-reflexive, \Cref{LGPI4} yields that $u(E'(X))\leq 4$, whereby $\dim(\vf)\leq 4$, and further that $\varphi$ is anisotropic over $E'(X)_w$ for some $w\in\Omega_{E'(X)/E'}$.
Since the extension $E'(X)/E(X)$ is algebraic, it follows that $w(\mg{E(X)})$ is a nontrivial subgroup of
$w(\mg{E'(X)})=\zz$. Hence $\mc{O}_w\cap E(X)=\mc{O}_v$ for a $\zz$-valuation $v$ on $E(X)$. 
Then $v\in\Omega_{E(X)/E}$ and the completion $E(X)_v$ is contained in $E'(X)_w$, whereby $\varphi$ is anisotropic over $E(X)_v$.
\end{proof}

One consequence of \Cref{LGPI4} is that its Condition $(ii)$ is stable under direct limits.
The following example shows that the local-global principle for isotropy of $4$-dimensional quadratic forms over $E(X)$ with respect to $\Omega_{E(X)/E}$  as a property of a field $E$ does not behave so nicely on its own, that is, without being combined with the condition that $u(E(X))\leq 4$ as in $(ii)$.

\begin{ex}
Let $p$ be an odd prime number.
Let $E$ be the maximal unramified extension of $\qq_p$.
By \cite[Theorem 12]{Lan52}, $E$ is a $\mc{C}_1$-field.
Hence $E(X)$ is a $\mc{C}_2$-field, so in particular $u(E(X))\leq 4$.
By \Cref{P:wsr-field}, $E$ is not square-reflexive, because it carries a discrete valuation.
Since $E$ is nonreal, it follows by \Cref{LGPI4} that the local-global principle for isotropy of $4$-dimensional forms over $E(X)$ with respect to $\Omega_{E(X)/E}$ fails.

On the other hand, $E$ is an algebraic extension of $\qq_p$, hence a direct limit of finite extensions.
As explained in \cite[Remark 3.8]{CTPS}, 
for any finite field extension $K/\qq_p$, the local-global principle for isotropy of $4$-dimensional forms over $K(X)$ with respect to $\Omega_{K(X)/K}$ does hold.

This fact can also be retrieved using \cite[Theorem~4.1]{Pop88} from the $\Phi$-property introduced in \cite[Definition 3.3]{Pop88}, which holds for any nondyadic local field, as a consequence of a result of Lichtenbaum \cite[Theorem~4]{Lich69}. 
It follows from \cite[Theorem~3.4]{Pop88} that the $\Phi$-property is a first-order property in the language of fields.
We mention that one can see from the example of the field $E$ here that the $\Phi$-property is not preserved under direct limits.
\end{ex}

\section{Relation to hyperelliptic curves}
\label{hyperelliptic}

In this section we consider the case where $u(E_p)\leq 2$ holds for all $p\in\mc{P}$ and 
explore for a nonconstant square-free polynomial $f\in E[X]$ the relationship between the two conditions that $f$ is square-reflexive and that the affine hyperelliptic curve $Y^2=f(X)$ has a point of odd degree.

\begin{lem}\label{L:hyper-odd}
Let $f\in E[X]$ be square-free of even degree with $\lc(f)\notin\sq{E}$.
If the affine curve $Y^2=f(X)$ has a point of odd degree over $E$, then it has a point of odd degree at most equal to $\frac{\deg(f)}{2}$.
\end{lem}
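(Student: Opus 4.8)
The plan is to translate the geometric hypothesis into a statement about polynomials and then run a degree-reduction argument on the $X$-coordinate. First I would observe that a closed point $P$ of the affine curve $Y^2=f(X)$ of odd degree over $E$ must have its residue field generated by the $X$-coordinate alone: if the residue field of $P$ is $E(\xi,\eta)$ with $\eta^2=f(\xi)$, then $[E(\xi,\eta):E(\xi)]\leq 2$, so an odd value of $[E(\xi,\eta):E]$ forces $\eta\in E(\xi)$ and hence $E(\xi,\eta)=E(\xi)$. Writing $p\in E[X]$ for the minimal polynomial of $\xi$, this says precisely that $p$ is irreducible of odd degree and $f$ is a square modulo $p$, that is, $f+(p)\in\sq{E_p}$; conversely, any such $p$ yields a point of odd degree $\deg(p)$. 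Thus it suffices to show that, among the irreducible polynomials of odd degree modulo which $f$ is a square, there is one of degree at most $\frac{1}{2}\deg(f)$. Since the hypothesis provides at least one such polynomial, this set of odd degrees is a nonempty set of positive integers.

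Second, let $n=\deg(f)$ and choose such a polynomial $p$ of \emph{minimal} odd degree $d$; I want to show $d\leq \frac n2$. Assume for contradiction that $d>\frac n2$. As $f$ is a square modulo $p$, I can pick $h\in E[X]$ with $\deg(h)<d$ and $f\equiv h^2\bmod p$, and set $k=(h^2-f)/p\in E[X]$, which is nonzero because $f$ is square-free and nonconstant and hence not a square in $E[X]$. The key is to control the degree and parity of $k$ through $\deg(pk)=\deg(h^2-f)$. Here I would use the hypothesis $\lc(f)\notin\sq E$: when $2\deg(h)=n$ the leading coefficients $\lc(h)^2\in\sq E$ and $\lc(f)\notin\sq E$ cannot cancel, so in every case $\deg(h^2-f)=\max(2\deg(h),n)$ is \emph{even}. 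Consequently $\deg(k)=\deg(h^2-f)-d$ is odd, and the bound $\deg(h^2-f)\leq\max(2d-2,\,n)$ combined with $d>\frac n2$ gives $\deg(k)\leq\max(d-2,\,n-d)<d$.

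Finally, since $\deg(k)$ is odd, $k$ has an irreducible factor $q$ of odd degree, and $\deg(q)\leq\deg(k)<d$. From $q\mid k$ and $pk=h^2-f$ I get $q\mid(h^2-f)$, hence $f\equiv h^2\bmod q$, so $f$ is a square modulo $q$; this produces a point of odd degree $\deg(q)<d$, contradicting the minimality of $d$. Therefore $d\leq\frac n2$, as desired. I expect the only delicate point to be the parity bookkeeping in the second step: one must ensure that $\deg(h^2-f)$ stays even, and this is exactly where the assumption $\lc(f)\notin\sq E$ is indispensable, since a cancellation of leading terms at $2\deg(h)=n$ could make this degree odd and break the argument. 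The remaining manipulations of divisibility and degrees are routine.
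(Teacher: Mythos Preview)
Your proof is correct and follows essentially the same approach as the paper's: both translate odd-degree points into irreducible polynomials $p$ of odd degree with $f$ a square modulo $p$, take one of minimal degree, write $f-h^2=pk$ with $\deg(h)<\deg(p)$, use $\lc(f)\notin\sq{E}$ to force $\deg(h^2-f)=\max(2\deg(h),n)$ to be even so that $\deg(k)$ is odd, and then extract an odd-degree irreducible factor $q$ of $k$ to contradict minimality. The only cosmetic difference is that you argue by contradiction (assuming $d>\tfrac n2$ to get $\deg(k)<d$), whereas the paper argues directly, using minimality to obtain $\deg(k)\geq\deg(p)$ and hence $2\deg(p)\leq\deg(pk)=\deg(f)$.
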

\begin{proof}
Suppose that $f(x)=y^2$ for certain elements $x,y$ contained in an algebraic field extension of $E$ such that $[E[x,y]:E]$ is odd. We may further assume that $[E[x,y]:E]$ is minimal among all such choices of $(x,y)$.

Since $y^2=f(x)\in E[x]$, we have $[E[x,y]:E[x]]\leq 2$, and since $[E[x,y]:E]$ is odd, we conclude that $y\in E[x]$.
Let $p$  be the minimal polynomial of $x$ over $E$.
Then $\deg(p)$ is odd and $(x,y)$ is a point of the curve $Y^2=f(X)$ over $E$ of degree equal to $\deg(p)$. 
Let $s\in E[X]$ with $\deg(s)<\deg(p)$ be such that $y=s(x)$. 
We obtain that $f-s^2=ph$ for some $h\in E[X]$.

Since $\lc(f)\notin\sq{E}$, we have $\deg(ph)=\max(\deg(f),2\deg(s))$, which is even.
Hence $\deg(h)$ is odd. Let $q$ be an irreducible factor of $h$ of odd degree.
Since $q$ divides $f-s^2$,
the curve $Y^2=f(X)$ has a point $(x',y')$ over $E_q$.
Hence the degree $[E[x',y']:E]$ divides $\deg(q)$, so it is odd.
By the choice of $(x,y)$ we have that 
$\deg(h)\geq\deg(q)\geq [E[x',y']:E]\geq [E[x,y]:E]=\deg(p)$.
Hence $2\deg(s)<2\deg(p)\leq \deg(ph)=\deg(f)$.
\end{proof}

\begin{prop}\label{P:hyper-example}
Let $f\in E[X]$ be square-free and let $p$ be a monic irreducible factor of $f$ such that $f(0)p(0)\in\sq{E}$.
Consider the quadratic form 
$$\varphi=\la f,-p,X,-pX\ra$$ over the field $F=E(X)$. Then:
\begin{enumerate}[$(a)$]
\item For any $v\in\Omega_{F/E}$ with $u(\kappa_v)\leq 2$, the form $\varphi$ is isotropic over $F_v$.
\item If $\deg(f)$ is even, $p(0),\lc(f)\notin\sq{E}$ and $\varphi$ is isotropic over $F$, then the affine curve
$Y^2=f(X)$ has a point of odd degree over $E$.
\end{enumerate}
\end{prop}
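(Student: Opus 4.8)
The plan is to fix $v=v_q$ with $q\in\mc{P}'$ and $u(\kappa_v)\leq 2$ and to verify isotropy of $\vf=\la f,-p,X,-pX\ra$ over $F_v$ by a case analysis on the parity of $v(f)$, using that $\det(\vf)=f\sq{F}$ together with \Cref{Springer}. First I note that $f(0)p(0)\in\sq E$ forces $f(0),p(0)\in\mg E$, so $X\nmid f$ and hence $X\notin\supp(f)$. If $v(f)$ is odd --- that is, if $q\in\supp(f)$, or $q=\infty$ with $\deg(f)$ odd --- then $v(\det\vf)$ is odd and \Cref{four-dimensionlocal} gives isotropy. It then remains to treat $v(f)$ even. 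If $q\in\mc{P}\setminus\supp(f)$ with $q\neq X$, then all four coefficients are $v$-units, the residue form is $4$-dimensional over $\kappa_v=E_q$ and hence isotropic since $u(\kappa_v)\leq 2$, so Springer~$(a)$ applies. If $q=X$, then the binary subform $\la f,-p\ra$ has unit entries and residue form $\la f(0),-p(0)\ra$, which is isotropic over $E$ precisely because $f(0)p(0)\in\sq E$; thus $\la f,-p\ra$, and a fortiori $\vf$, is isotropic over $F_X$. Finally, if $q=\infty$ (so $\deg f$ is even), then in the Springer decomposition of $\vf$ at $v_\infty$ the coefficient $X$ and exactly one of $-p,-pX$ have odd value; as $p$ is monic their residues are $1$ and $-1$, so the odd part of the residue form is $\la 1,-1\ra\simeq\hh$ and Springer~$(b)$ yields isotropy.

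\textbf{Plan for part $(b)$, first half.} Now assume $\deg f$ even, $p(0),\lc(f)\notin\sq E$ and $\vf$ isotropic over $F$. Applying \Cref{firstslot} with $(c,a,b)=(f,p,-X)$, isotropy of $\vf=\la f,-p,X,-pX\ra$ produces $d\in\mg F$ with $\{p,-X\}=\{f,d\}$ in $\k_2F$, and I may take $d=e\,d_0$ with $e\in\mg E$ and $d_0\in E[X]$ monic square-free. Applying $\partial$ and comparing with $\partial(\{p,-X\})$ --- whose support lies in $\{v_p,v_X\}$, with $\partial_X(\{p,-X\})=\{p(0)\}$ and $\partial_\infty(\{p,-X\})=0$ --- I read off two parity constraints. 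At $v_X$ one has $v_X(f)=0$, so $\partial_X(\{f,d\})=v_X(d)\{f(0)\}$; since $\{f(0)\}=\{p(0)\}\neq 0$ this forces $v_X(d)$ odd, i.e.\ $X\mid d_0$. At $v_\infty$, using that $\deg f$ is even, a direct computation gives $\partial_\infty(\{f,d\})=\deg(d_0)\{\lc f\}$, which must vanish; as $\lc f\notin\sq E$ this forces $\deg(d_0)$ even. Writing $d_0=X d_1$ with $d_1$ monic square-free and coprime to $X$, the two facts combine to give that $\deg(d_1)$ is \emph{odd}.

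\textbf{Plan for part $(b)$, second half.} Consequently $d_1$ admits a monic irreducible factor $q$ of odd degree, and I claim the associated point of $Y^2=f(X)$ has odd degree $\deg(q)$. If $q\mid f$, then $q$ is an odd-degree factor of $f$ and any root $x$ of $q$ gives the point $(x,0)$. If $q\nmid f$, then $q\notin\{v_p,v_X\}$, so $\partial_q(\{f,d\})=\partial_q(\{p,-X\})=0$; since $v_q(f)=0$ and $v_q(d)$ is odd, this reads $\{f\bmod q\}=0$ in $\k_1E_q$, i.e.\ $f$ is a square modulo $q$, and a root $x$ of $q$ together with a square root $y\in E_q$ of $f(x)$ gives a point $(x,y)$ with $[E[x,y]:E]=\deg(q)$ odd. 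Either way the affine curve $Y^2=f(X)$ has a point of odd degree.

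\textbf{Where the difficulty lies.} Part $(a)$ is a routine Springer case analysis; the substance is in part $(b)$. The crux is the parity bookkeeping on $d$: recognising that the hypothesis $\lc(f)\notin\sq E$ feeds through $\partial_\infty$ to force $\deg(d_1)$ odd, thereby producing an odd-degree factor coprime to $X$ at which $f$ becomes a square. The point requiring care is that $d$ is only determined up to the choice of representative, so I must ensure the residue computations of $\partial(\{f,d\})$ at $v_X$, $v_\infty$ and at the chosen factor $q$ are valid independently of this choice and of whether $q$ divides $f$.
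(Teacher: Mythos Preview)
Your proposal is correct and follows essentially the same route as the paper's proof. Part~$(b)$ is virtually identical: both obtain $\{p,-X\}=\{f,g\}$ from \Cref{firstslot}, then use $\partial_X$ together with $p(0)\notin\sq E$ to force $X\mid g$, use $\partial_\infty$ together with $\lc(f)\notin\sq E$ and $\deg(f)$ even to force $\deg(g)$ even, and finally extract an odd-degree irreducible factor $q$ of $g/X$ at which either $q\mid f$ or $f$ becomes a square modulo $q$.

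The only difference is organizational, in part~$(a)$. You split cases by the parity of $v(f)$, invoking \Cref{four-dimensionlocal} when $v(f)$ is odd and a direct Springer analysis otherwise. The paper instead splits by place: at $v_X$ it uses the subform $\la f,-p\ra$ (as you do), at $v_\infty$ it observes uniformly that the subform $\la -p,X,-pX\ra$ is isotropic because $p$ is monic (which covers both parities of $\deg f$ at once), and at all remaining places it notes that three of the four entries of $\vf$ always share a common value parity, whence \Cref{L:Springer}$(a)$ applies. Both decompositions are straightforward and lead to the same verifications.
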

\begin{proof}
Note that the hypothesis implies that $f(0)\neq 0$, so $X$ does not divide $f$.

$(a)$\,
To show the isotropy of $\varphi$ over a given extension of $F$, it suffices to show that some subform of $\varphi$ becomes isotropic over this extension.
Since $f(0)p(0)\in\sq{E}$, the subform $\la f,-p\ra$ is isotropic over $F_{v_X}$.
Since $p$ is monic, the subform $\la -p,X,-Xp\ra$ is isotropic over $F_{v_\infty}$.
If $v\in\Omega_{F/E}\setminus\{X,\infty\}$, then $\varphi$ has a subform $\la a,b,c\ra$ with
$a,b,c\in\mg{F}$ such that $v(a)\equiv v(b)\equiv v(c)\bmod 2$, and hence, if $u(\kappa_v)\leq 2$, then it follows by 
\Cref{L:Springer} that $\la a,b,c\ra$ is isotropic over $F_v$.

$(b)$\, 
Assume that $\varphi$ is isotropic over $F$.
Then by \Cref{firstslot}, we have $\{p,-X\}=\{f,g\}$ in $\k_2F$ for some $g\in \mg{F}$, and as $F=E(X)$ we can choose $g$ to be square-free polynomial in $E[X]$. 
Set $a=p(0)$. Then the hypothesis implies that $f(0)\in a\sq{E}$.
Suppose now that $\deg(f)$ is even and $a,\lc(f)\notin\sq{E}$.
Then $\partial_X(\{f,g\})=\partial_X(\{p,-X\})=\{a\}\neq 0$, and as $f(0)\neq 0$ it follows that $X$ divides $g$.
We write $g=X\cdot h$ with $h\in E[X]$. 
Since $p$ is monic, we have 
$\partial_\infty(\{f,g\})=\partial_\infty(\{p,-X\})=0$, and
since $\deg(f)$ is even and $\lc(f)\notin\sq{E}$, it follows that $\deg(g)$ is even.
Therefore $\deg(h)$ is odd. Hence $h$ has a monic irreducible factor $q$ of odd degree.
Note that $q\neq X$, because $X\cdot h=g$ is square-free.
Let $\gamma=X+(q)$ in $E_q=E[X]/(q)$.
If $q$ divides $f$, then $f(\gamma)=0$, and thus $(\gamma,0)$ is a point of odd degree of the affine curve $f(X)=Y^2$ over $E$.
Assume that $q$ does not divide $f$.
Then $q\neq p$ and $f(\gamma)\neq 0$, and we obtain that 
$0=\partial_q(\{p,-X\})= \partial_q(\{f,g\})=\{f(\gamma)\}$ in $\k_1 E_q$, whereby $f(\gamma)=\xi^2$ for some $\xi\in \mg{E_q\!\!}$. 
Then $(\gamma,\xi)$ is a point of odd degree of $f(X)=Y^2$ over $E$.
\end{proof}

\begin{cor}
Let $t\in \mg{E}$ and $r\in\nat$.
Assume that there exists a valuation $w$ on $E$ such that $w(t)\notin w(\sq{E})$.
Then the following hold:
\begin{enumerate}[$(a)$]
\item The affine curve $Y^2=t(X^{4r}-t^2)$ has no point of odd degree over $E$.
\item The form $\varphi=\la t(X^{4r}-t^2),t-X^{2r},X,-X(X^{2r}-t)\ra$ over $E(X)$ is anisotropic.
\item For any $v\in\Omega_{E(X)/E}$ with $u(\kappa_v)\leq 2$, the form $\varphi$ is isotropic over $E(X)_v$.
\end{enumerate}
\end{cor}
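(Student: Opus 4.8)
The plan is to put $f=t(X^{4r}-t^2)=t(X^{2r}-t)(X^{2r}+t)$ and $p=X^{2r}-t$, so that $\varphi=\la f,-p,X,-pX\ra$ is exactly the form of \Cref{P:hyper-example} for this $p$. I would first record the elementary facts. Here $\lc(f)=t$ and $\deg(f)=4r$ is even, and $f$ is square-free (the factors $X^{2r}\pm t$ are coprime, any common divisor dividing $2t\in\mg E$, and $X^{4r}-t^2$ shares no root with its derivative). From $w(t)\notin w(\sq E)=2\,w(\mg E)$ one gets $t\notin\sq E$, and as $w(-1)=0$ also $-t\notin\sq E$; thus $\lc(f)=t\notin\sq E$ and $p(0)=-t\notin\sq E$, while $f(0)p(0)=(-t^3)(-t)=t^4\in\sq E$. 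The only caveat is that $p$ need not be irreducible, so throughout I would apply the \emph{arguments} of \Cref{P:hyper-example} rather than its statement, after checking that irreducibility of $p$ is never used there: only that $p$ is monic, divides $f$, and satisfies $f(0)p(0)\in\sq E$.

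For $(c)$ I would reproduce the proof of part $(a)$ of \Cref{P:hyper-example}. At $v_X$ the units $f,-p$ have residues $-t^3,t$, and $\la -t^3,t\ra$ is isotropic since $-(-t^3)t=t^4\in\sq E$, so $\la f,-p\ra$, hence $\varphi$, is isotropic over the completion by \Cref{Springer}; since $p$ is monic, $\la -p,X,-pX\ra$ is isotropic at $v_\infty$. For $v=v_q$ with $q\neq X$ one has $v_q(X)=0$, and with $\beta=v_q(p)$ the entries have valuations $v_q(f),\beta,0,\beta$: if $\beta$ is even then $\la -p,X,-pX\ra$ has all-even valuations, while if $\beta$ is odd then $q\mid p$, hence $q\nmid(X^{2r}+t)$ (a common factor would divide $2t$), so $v_q(f)=\beta$ and $\la f,-p,-pX\ra$ has all-odd valuations; in both cases \Cref{L:Springer}$(a)$ yields isotropy over $E(X)_{v_q}$ once $u(\kappa_{v_q})\leq 2$. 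For $(b)$ I would use the contrapositive of the argument of part $(b)$ of \Cref{P:hyper-example}: as $\deg(f)$ is even, $p(0),\lc(f)\notin\sq E$, and (by $(a)$) the curve has no point of odd degree, that argument forces $\varphi$ to be anisotropic over $E(X)$.

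The real content is $(a)$. Suppose $Y^2=f(X)$ has a point of odd degree, i.e.\ there are $x,y$ with $L=E[x,y]$ of odd degree over $E$ and $y^2=t(x^{4r}-t^2)$. The crucial valuation-theoretic step is that, since $[L:E]$ is odd, $w$ has an extension $W$ to $L$ with odd ramification index $e=(W(\mg L):w(\mg E))$; this follows from Ostrowski's identity $[L:E]=\sum_i d_ie_if_i$, where each defect $d_i$ is a power of the residue characteristic, so an odd total sum forces some $e_i$ to be odd. For such a $W$ I would argue by parity in $W(\mg L)$. First $w(t)\notin 2\,W(\mg L)$: if $w(t)=2\gamma$ with $\gamma\in W(\mg L)$, then $e\gamma\in w(\mg E)$, whence $e\cdot w(t)\in 2\,w(\mg E)=w(\sq E)$; as $e$ is odd and $w(\mg E)/w(\sq E)$ is $2$-torsion, this gives $w(t)\in w(\sq E)$, a contradiction. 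On the other hand $W(x^{2r})=2r\,W(x)\in 2\,W(\mg L)$, so $W(x^{2r})\neq W(t)$ and hence $W(x^{4r})\neq W(t^2)$, giving $W(x^{4r}-t^2)=\min\{W(x^{4r}),W(t^2)\}\in 2\,W(\mg L)$. Then $W(y^2)=W(t)+W(x^{4r}-t^2)\notin 2\,W(\mg L)$, which is absurd since $W(y^2)=2\,W(y)\in 2\,W(\mg L)$. I expect the main obstacle to be exactly this valuation bookkeeping for a general (possibly non-discrete) $w$: producing an odd-ramification extension and running the parity argument in an arbitrary value group, rather than the rank-one case where it reduces to ordinary integer parities.
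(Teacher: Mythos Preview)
Your treatment of $(b)$ and $(c)$ matches the paper's: both apply \Cref{P:hyper-example} with $f=t(X^{4r}-t^2)$ and $p=X^{2r}-t$, and you are right that the irreducibility hypothesis on $p$ in that proposition is never actually used in its proof---only that $p$ is monic, divides $f$, and satisfies $f(0)p(0)\in\sq{E}$.

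For $(a)$ you take a genuinely different route. The paper first substitutes $X\mapsto X^r$, observing that an odd-degree point $(x,y)$ on $Y^2=t(X^{4r}-t^2)$ yields the odd-degree point $(x^r,y)$ on $Y^2=t(X^4-t^2)$, so one may assume $r=1$. Since then $\lc(f)=t\notin\sq{E}$ and $\deg(f)=4$, \Cref{L:hyper-odd} reduces the problem to excluding $E$-rational points, and the parity argument with $w$ is carried out entirely inside $E$---no extension of the valuation is needed. Your approach instead extends $w$ to the odd-degree field $L=E[x,y]$ with odd ramification index and runs the parity argument in $W(\mg{L})$. This is correct and more direct, but two small points deserve care: the case $x=0$ (where $W(x)$ is undefined) should be handled separately---there $y^2=-t^3$ and $[E[y]:E]$ divides both $2$ and the odd number $[L:E]$, forcing $y\in E$ and hence $3w(t)\in w(\sq{E})$, a contradiction---and the identity $[L:E]=\sum_i d_ie_if_i$ should be justified via the henselization (it holds as stated for separable $L/E$; any purely inseparable part contributes only $p$-power, hence odd, ramification since $\car(E)\neq 2$). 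The paper's reduction via \Cref{L:hyper-odd} sidesteps this bookkeeping entirely.
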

\begin{proof}
To show $(a)$, we substitute $X$ for $X^r$ and thus reduce to the case where $r=1$.
Hence we consider the affine curve $Y^2=t(X^4-t^2)$ over $E$.
By \Cref{L:hyper-odd}, it suffices to show that this curve has no $E$-rational point.
Consider $x\in E$.
Since $w(t)\notin w(\sq{E})$, we have $w(t^2)\neq w(x^4)$ and
therefore $w(x^4-t^2)=\min(w(t^2),w(x^4))\in w(\sq{E})$.
It follows that $w(t(x^4-t^2))\notin w(\sq{E})$.
Hence the curve $Y^2=t(X^4-t^2)$ has no $E$-rational point.

Having thus established $(a)$, we can apply \Cref{P:hyper-example} with $f=t(X^{4r}-t^2)$ and $p=X^{2r}-t$ to conclude $(b)$ and $(c)$.
\end{proof}

\begin{prop}\label{P:srp-odd-deg-hyperel}
Assume that $u(L)\leq 2$ for every finite field extension $L/E$.
Let $f\in E[X]\setminus E$ be square-reflexive. 
Then the affine curve $Y^2=f(X)$ has a point of odd degree.
\end{prop}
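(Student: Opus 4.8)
The plan is to split the problem according to the degrees of the irreducible factors of $f$ and the square class of its leading coefficient $c=\lc(f)$, producing an explicit odd-degree point in each case. First I would dispose of a trivial reduction: if $f$ has an irreducible factor $p$ of odd degree, then a root $\gamma$ of $p$ gives the point $(\gamma,0)$ on $Y^2=f(X)$, of odd degree $\deg(p)$, and we are done. So I may assume that \emph{every} irreducible factor of $f$ has even degree; in particular $\deg(f)=2m$ is even. Note that this step uses neither square-reflexivity nor the hypothesis on $u$; those will enter only in the last case.

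Next, suppose $c\in\sq{E}$, say $c=u^2$. Here I would argue by \emph{completing the square}: choose $s\in E[X]$ of degree $m$ with leading coefficient $u$ such that $\deg(f-s^2)\leq m-1$ (solvable coefficient by coefficient since $\car(E)\neq 2$). For a constant $e\in\mg{E}$ the polynomial $f-(s+e)^2=(f-s^2)-2es-e^2$ has degree exactly $m$, whereas for $\delta=eX$ the polynomial $f-(s+\delta)^2=(f-s^2)-2eXs-e^2X^2$ has degree exactly $m+1$, the leading term $-2ue\,X^{m+1}$ being nonzero. Taking the constant perturbation when $m$ is odd and the linear one when $m$ is even, I obtain a nonzero $h=f-(s+\delta)^2$ of \emph{odd} degree, hence with an irreducible factor $q$ of odd degree. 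Since all factors of $f$ have even degree, $q\nmid f$, so at a root $\gamma$ of $q$ we get $f(\gamma)=(s+\delta)(\gamma)^2\neq 0$, and $(\gamma,(s+\delta)(\gamma))$ is a point of odd degree $\deg(q)$.

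The substantial case is $c\notin\sq{E}$, and this is where the hypotheses are used. Since $u(L)\leq 2$ for every finite $L/E$, \Cref{L:k1norm-surj} makes each $\N_{E_p/E}\colon\k_1E_p\to\k_1E$ surjective, hence so is $\N_{E_f/E}$; I would pick $\alpha\in\mg{E_f}$ with $\N_{E_f/E}(\alpha)\in c\sq{E}$. Define $\rho\in\mf R_2(E)$ by $\rho_p=\{\alpha_p\}$ for $p\in\supp(f)$, $\rho_\infty=\{c\}$, and $\rho_p=0$ otherwise; by the diagram \eqref{E:norm-diagram} (equivalently \Cref{L:norm-at-infinity}) the choice of $\alpha$ makes $\N(\rho)=0$, so indeed $\rho\in\mf R_2(E)$, with $\rho_\infty=\{c\}$. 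Since $f$ is square-reflexive, \Cref{SRtoRC} yields $g\in E[X]\setminus\{0\}$ with $\partial(\{f,g\})=\rho$, and by \Cref{degreebound} I may take $g$ square-free. Comparing $\rho_\infty=\{c\}\neq 0$ with the value of the residue at infinity, $\partial_\infty(\{f,g\})=\{c^{\deg(g)}\lc(g)^{\deg(f)}\}=\{c^{\deg(g)}\}$ (using that $\deg(f)$ is even), forces $\deg(g)$ to be odd because $c\notin\sq{E}$. Thus $g$ has an irreducible factor $q$ of odd degree; as $\deg(q)$ is odd we have $q\nmid f$, so $\partial_q(\{f,g\})=\{\,\ovl{f}\,\}=\rho_q=0$, i.e.\ $f$ is a nonzero square in $E_q$, giving an odd-degree point $(\gamma,\xi)$ with $\gamma=X+(q)$ and $\xi^2=f(\gamma)$.

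The one step demanding care --- the main obstacle --- is forcing the parity in the last case. The identity $\partial_\infty(\{f,g\})=\{c^{\deg(g)}\}$ (from the formula for $\partial_{v_\infty}$ recorded in \Cref{Preliminaries}) rests on $\deg(f)$ being even, and it degenerates to $0$ exactly when $c\in\sq{E}$, so no parity information about $\deg(g)$ survives there. This is precisely why the dichotomy on the square class of $c$ is essential: the square-leading-coefficient case cannot be treated by this ramification argument and must instead be handled by the completing-the-square construction, which is available \emph{only} because $c$ is a square. In writing up both nontrivial cases I would double-check the residue computation and, above all, verify that the produced factor $q$ is genuinely coprime to $f$, so that $f(\gamma)\neq 0$ and $(\gamma,\xi)$ is an honest point of the affine curve rather than a root of $f$.
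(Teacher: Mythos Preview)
Your proof is correct and follows essentially the same strategy as the paper's: reduce to the case where $\deg(f)$ is even and $c=\lc(f)\notin\sq{E}$, then use square-reflexivity together with \Cref{L:k1norm-surj} to construct a ramification sequence $\rho$ with $\rho_\infty=\{c\}$ and force any $g$ with $\partial(\{f,g\})=\rho$ to have odd degree, producing an odd-degree factor modulo which $f$ is a square. The only cosmetic differences are that your initial reduction (to all irreducible factors of $f$ having even degree) is slightly stronger than the paper's, which lets you verify $q\nmid f$ by parity rather than by taking $g$ coprime to $f$ from the definition of square-reflexivity, and your square-completion in the case $c\in\sq{E}$ is written out in more detail than the paper's one-line choice $s=(aX+b)X^{r-1}$.
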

\begin{proof}
If there exists $s\in E[X]$ such that $f-s^2$ has a factor of odd degree, then 
the curve $Y^2=f(X)$ has a point of odd degree.
This applies in particular when $\deg(f)$ is odd.
On the other hand, if $\deg(f)=2r$ and $\lc(f)=a^2$ for some $r\in\nat$ and $a\in\mg{E}$, we get that $\deg(f-((aX+b)X^{r-1})^2)=2r-1$ for every $b\in E$ for which $2ab$ differs form the coefficient of $f$ in degree $2r-1$. 
We may therefore assume that $\deg(f)$ is even and $\lc(f)\notin\sq{E}$.

We fix a factor $q\in\mc{P}$ of $f$ and set $c=\lc(f)$. 
By \Cref{L:k1norm-surj}, since $u(L)\leq 2$ for every finite field extension $L/E$, there exists some $x\in \mg{E_q\!\!}$ 
with $\N_{E_q/E}(x)\in c\sq{E}$.
Letting $\rho_q=\{x\}$, $\rho_\infty=\{c\}$ and $\rho_{r}=0$ for all $r\in\mc{P}\setminus\{q\}$ defines a ramification sequence $\rho\in\mf R_2(E)$  with $\supp(\rho)\subseteq \supp(f)\cup\{\infty\}$ and $\rho_\infty=\{c\}$.
As $f$ is square-reflexive, there exists $g\in E[X]$ coprime to $f$ with $\rho=\partial(\{f,g\})$.
In particular, we have $\{c\}=\rho_\infty=\partial_\infty(\{f,g\})=\{c^{\deg(g)}\}$ in $\k_1E_\infty$.
As $c\notin\sq{E}$, we conclude that $\deg(g)$ is odd. 
We choose a factor $p\in\mc{P}$  of $g$ of odd degree.
Then it follows that $p\notin\supp(\rho)$, whereby $\partial_p(\{f,g\})=\rho_p=0$.
Hence $f$ is a square modulo $p$. Therefore $Y^2=f(X)$ has a point over $E_p$, and hence of odd degree over $E$.
\end{proof}

\begin{cor}
If $E$ is square-reflexive, then every affine hyperelliptic curve over $E$ has a point of odd degree.
\end{cor}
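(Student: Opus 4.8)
The plan is to deduce this corollary directly from \Cref{P:srp-odd-deg-hyperel}. An affine hyperelliptic curve over $E$ has the form $Y^2=f(X)$ for a square-free polynomial $f\in E[X]\setminus E$, and since $E$ is square-reflexive, every such $f$ is square-reflexive. Thus \Cref{P:srp-odd-deg-hyperel} applies and yields a point of odd degree, \emph{provided} its standing hypothesis is met, namely that $u(L)\leq 2$ for every finite field extension $L/E$. The entire content of the proof therefore lies in deducing this condition from the single assumption that $E$ is square-reflexive.

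To establish it, I would first invoke \Cref{LGPI4} (equivalently \Cref{T:sqf-div-sqref}, since a square-reflexive field is weakly square-reflexive), which gives $u(E(X))\leq 4$. For each $p\in\mc{P}$, the valuation $v_p$ is a $\zz$-valuation on $E(X)$ with residue field $E_p$, and it is nondyadic because $\car(E)\neq 2$ forces $v_p(2)=0$. Since $u(E(X))<8$, \Cref{L:rat-res-field-u} then yields $u(E_p)\leq 2$ for every $p\in\mc{P}$. As the fields $E_p=E[X]/(p)$ are precisely the finite simple extensions of $E$, and every finite \emph{separable} extension of $E$ is simple by the primitive element theorem, this already shows $u(L)\leq 2$ for every finite separable extension $L/E$.

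It then remains to pass from separable extensions to arbitrary finite extensions $L/E$, and to conclude. Writing $L_s$ for the separable closure of $E$ in $L$, the extension $L/L_s$ is purely inseparable and hence of odd degree, as $\car(E)\neq 2$. In this situation every square class of $L$ is represented by an element of $L_s$, since a suitable $p$-power of any element lies in $L_s$ and differs from the element by an even power, that is, by a square; combined with the preservation of anisotropy under odd-degree extensions (Springer's theorem), this gives $u(L)=u(L_s)\leq 2$. Hence $u(L)\leq 2$ for \emph{all} finite $L/E$, and \Cref{P:srp-odd-deg-hyperel} delivers the desired point of odd degree on $Y^2=f(X)$. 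The one genuinely nontrivial point, and the step I expect to be the main obstacle, is this verification that the blanket condition $u(L)\leq 2$ holds over \emph{all} finite extensions (i.e.\ that $E$ has cohomological $2$-dimension at most $1$), rather than merely $u(E)\leq 2$; the reduction handling purely inseparable extensions is the delicate part, whereas the separable case and the final application are immediate appeals to \Cref{LGPI4}, \Cref{L:rat-res-field-u} and \Cref{P:srp-odd-deg-hyperel}.
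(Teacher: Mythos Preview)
Your proposal is correct and follows the same route as the paper: invoke \Cref{LGPI4} to obtain $u(E(X))\leq 4$, apply \Cref{L:rat-res-field-u} to get $u(L)\leq 2$ for the relevant finite extensions, and then conclude by \Cref{P:srp-odd-deg-hyperel}. The paper's proof states this in one line and asserts $u(L)\leq 2$ for \emph{all} finite $L/E$ directly from \Cref{L:rat-res-field-u}, whereas you carefully note that the residue fields $E_p$ only realise the finite \emph{simple} extensions and then supply the reduction to the general case via the purely inseparable part and Springer's theorem; this extra care is justified and fills in a detail the paper leaves implicit.
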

\begin{proof}
By \Cref{LGPI4}, square-reflexivity of $E$ implies that $u(E(X))\leq 4$, which by \Cref{L:rat-res-field-u} implies that
$u(L)\leq 2$ for all finite field extensions $L/E$. Hence the statement follows from \Cref{P:srp-odd-deg-hyperel}.
\end{proof}

\begin{qu}
Assume that every hyperelliptic curve over $E$ has a point of odd degree.
Does it follow that $E$ is square-reflexive?
\end{qu}

\section{The transfer curve of a polynomial}\label{transfer} 

In this section our aim is to show that pseudo-algebraically closed fields have similar properties as finite fields concerning quadratic forms over the rational function field: If $E$ is pseudo-algebraically closed, then $E$ is square-reflexive and quadratic forms over $E(X)$ satisfy the local-global principle for isotropy with respect to $\Omega_{E(X)/E}$.
Our proof relies on an analysis of a certain type of variety arising from transfer maps of a quadratic form over a finite extension of $E$.

We withdraw our previous convention to reserve the terms \emph{form} and \emph{quadratic form} to regular quadratic forms. 

Let $n,r\in\nat$ with $r\leq n$.
For $n\in\nat$, by a \emph{quadratic form in $E[X_1,\dots,X_n]$} we mean a homogeneous polynomial of degree $2$ in $X_1,\dots,X_n$ over $E$.
The \emph{rank} of a quadratic form in $E[X_1,\dots,X_n]$ is  the rank of the corresponding symmetric matrix in $\mathbb{M}_{n}(E)$.
A \emph{diagonal form in $E[X_1,\dots,X_n]$} is an element of the $E$-subspace generated by the monomials $X_1^2,\dots,X_n^2$.

For $i,j\in\nat$ we denote by $\delta_{ij}$ the Kronecker delta with values $0$ and $1$ in $E$.

\begin{prop}\label{P:pencil-rank-geq3-absolirr}
Let $\mc{F}$ be an $r$-dimensional space of diagonal quadratic forms in $E[X_1,\dots,X_n]$.
Let $\mc{V}$ denote the projective variety over $E$ given as the vanishing set of $\mc{F}$ in $\mathbb{P}_E^{n-1}$.
Then $\mc{V}$ has dimension $n-r-1$.
Furthermore $\mc{V}$ is absolutely irreducible if and only if $\mc{F}$ contains no quadratic form of rank $1$ or $2$.
\end{prop}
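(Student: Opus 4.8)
The plan is to base change to the algebraic closure $\ovl E$ and to route everything through the squaring morphism. Write $\mc F$ as the row space of an $r\times n$ matrix $A$ over $E$ of rank $r$, and set $I=(\mc F)$. The affine cone $\wh{\mc V}=\spec(\ovl E[X_1,\dots,X_n]/I_{\ovl E})$ is then $\phi^{-1}(\wh L)$, where $\phi\colon \mathbb{A}^n\to\mathbb{A}^n$, $(x_i)\mapsto (x_i^2)$, is the finite surjective squaring map and $\wh L=\{u\in\mathbb{A}^n\mid Au=0\}$ is a linear subspace of dimension $n-r$ in the target. Since $\phi$ is finite and surjective, $\dim\wh{\mc V}=\dim\wh L=n-r$, so $\dim\mc V=n-r-1$; this gives the first assertion, and it exhibits $\mc V$ as cut out in $\pp^{n-1}_E$ by $r$ forms in codimension $r$, hence as a complete intersection, so $\wh{\mc V}$ is Cohen--Macaulay and equidimensional. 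Throughout I read \emph{absolutely irreducible} as geometrically integral, equivalently (using that a positive-dimensional complete intersection ideal is saturated and $S_1$) that $\ovl E[X_1,\dots,X_n]/I_{\ovl E}$ is a domain.

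For the implication that a form of rank $1$ or $2$ obstructs integrality, I would argue uniformly. Suppose $\mc F$ contains such a form $q$. Over $\ovl E$ it factors as a product $\ell\ell'$ of two nonzero linear forms, with $\ell'$ proportional to $\ell$ in the rank $1$ case. As $q\in I$, in a domain $\ovl E[X]/I_{\ovl E}$ one of $\ell,\ell'$ would lie in $I_{\ovl E}$; but $I$ is generated in degree $2$ and so contains no nonzero element of degree $1$, a contradiction. Hence $\ovl E[X]/I_{\ovl E}$ is not a domain and $\mc V$ is not absolutely irreducible. This single step settles both the rank $1$ and the rank $2$ cases.

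For the converse I assume $\mc F$ has no form of rank $1$ or $2$ and prove $\wh{\mc V}$ geometrically integral. The rank hypothesis translates into a statement about the restrictions $u_i|_{\wh L}$ of the coordinate functions to $\wh L\cong\mathbb{A}^{n-r}$: one has $u_i|_{\wh L}=0$ exactly when $X_i^2\in\mc F$, and $u_i|_{\wh L}$ proportional to $u_j|_{\wh L}$ exactly when $\mc F$ contains a form $\lambda X_i^2-X_j^2$. So the hypothesis says the $u_i|_{\wh L}$ are nonzero and pairwise non-proportional linear forms, hence pairwise non-associate primes in $\ovl E[\wh L]$. Over the torus $\mathbb{G}_m^n$ the map $\phi$ is an \'etale $(\zz/2\zz)^n$-cover, and $\wh{\mc V}\cap\mathbb{G}_m^n\to\wh L\cap\mathbb{G}_m^n$ is obtained by adjoining the square roots $\sqrt{u_i|_{\wh L}}$; this cover is irreducible precisely when the $n$ classes $u_i|_{\wh L}$ are $\ff_2$-independent in $\scg{\ovl E(\wh L)}$. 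Independence follows at once from unique factorisation, since any nonempty product $\prod_{i\in S}u_i|_{\wh L}$ is square-free and thus a non-square. Finally $\wh L\cap\mathbb{G}_m^n$ is dense in $\wh L$ (no $u_i|_{\wh L}$ vanishes), and no component $W$ of the equidimensional $\wh{\mc V}$ lies in a coordinate hyperplane $\{x_k=0\}$: by finiteness of $\phi$ and $\dim W=\dim\wh L$ such a $W$ would give $\phi(W)=\wh L\subseteq\{u_k=0\}$, forcing $X_k^2\in\mc F$. Thus the \'etale locus is dense and $\wh{\mc V}$ is irreducible; being a Cohen--Macaulay (so $S_1$) complete intersection that is generically reduced, it is reduced, and hence integral.

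The main obstacle is this forward direction: matching the number of components of the torus cover with the index of the subgroup of $\scg{\ovl E(\wh L)}$ generated by the classes of the $u_i|_{\wh L}$, and then confirming via unique factorisation that this subgroup attains full order $2^n$ exactly under the no-rank-$\le 2$ hypothesis. The supporting facts — that finite surjective morphisms preserve the dimension of preimages, that a positive-dimensional complete intersection is Cohen--Macaulay, $S_1$ and cut out by a saturated ideal, and that $R_0+S_1$ forces reducedness — are standard, and I would merely invoke them rather than reprove them.
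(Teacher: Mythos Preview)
Your argument is correct and reaches the same conclusion by a somewhat different route. The paper first puts the coefficient matrix of a basis of $\mc{F}$ into reduced row echelon form, so that $f_i = X_i^2 - \varphi_i$ with $\varphi_i$ a diagonal quadratic form in $R=E[X_{r+1},\dots,X_n]$; the affine coordinate ring is then $R[X_1,\dots,X_r]/(X_1^2-\varphi_1,\dots,X_r^2-\varphi_r)$, and over $\ovl E$ this is shown directly to embed as $R[\sqrt{\varphi_1},\dots,\sqrt{\varphi_r}]$ into a field, because the rank hypothesis forces $\varphi_1\cdots\varphi_r$ to be square-free in $R$, so no nontrivial subproduct of the $\varphi_i$ is a square in $\mathrm{Frac}(R)$. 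You instead realize the affine cone of $\mc V$ globally as $\phi^{-1}(\wh L)$ for the squaring map $\phi$, and then analyze the $(\zz/2\zz)^n$-cover over the torus locus using all $n$ restricted coordinate functions $u_i|_{\wh L}$, rather than $r$ quadratic forms. The Kummer-theoretic core is the same --- $\ff_2$-independence in a square-class group, established via square-freeness in a UFD --- but your packaging avoids the row-echelon normalization and makes the role of the squaring morphism explicit, at the price of invoking standard complete-intersection and $R_0+S_1$ facts to pass from the torus locus to all of $\wh{\mc V}$, where the paper simply exhibits an explicit embedding of the whole coordinate ring into a field.
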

\begin{proof}
Any $E$-basis $(f_1,\dots,f_r)$ of $\mc{F}$ corresponds to an $r\times n$ matrix $(a_{ij})_{ij}\in\mathbb{M}_{r\times n}(E)$
whose entries are determined by $f_i=\sum_{j=1}^n a_{ij}X_j^2$ for $1\leq i\leq r$.
Elementary row operations on this $r\times n$-matrix induce a change of the $E$-basis  of $\mc{F}$.
Hence we may find an $E$-basis $(f_1,\dots,f_r)$ such that the corresponding matrix $(a_{ij})_{ij}$ is in reduced row echelon form.
Using a permutation of the columns of the matrix, which corresponds to a permutation of the variables $X_1,\dots,X_n$, we may further reduce to the situation where $a_{ij}=\delta_{ij}$ for $1\leq i,j\leq r$.
 
Let $R=E[X_{r+1},\dots,X_{n}]$, which is an $E$-algebra of Krull dimension $s=n-r$.
We set $\vf_i=X_i^2-f_i$ for $1\leq i\leq r$. Then $\vf_1,\dots,\vf_r\in R$ and
the affine $E$-algebra $$A=R[X_{1},\dots,X_{r}]/(X_{1}^2-\varphi_1,\dots,X_{r}^2-\varphi_r)$$ 
is an integral extension of $R$, hence it also has Krull dimension $s$.
Viewing $\mc{V}$ as an affine variety embedded in $\mathbb{A}_E^n$,
its coordinate ring is equal to $A$, and hence its dimension is $s$.
Therefore the dimension of $\mc{V}$ as a projective variety is $s-1$.

Note that absolute irreducibility of $\mc{V}$ viewed as a projective variety (in $\mathbb{P}_E^{n-1}$) or as an affine variety (in $\mathbb{A}_E^n$) are equivalent.
Hence for the rest of the proof we view $\mc{V}$ as an affine variety.

Recall that quadratic forms in characteristic different from $2$ are absolutely irreducible if and only if their rank is at least $3$.
If $\mc{F}$ contains a quadratic form which is reducible, then the coordinate ring of $\mc{V}$ cannot be a domain and hence $\mc{V}$ must be reducible.
Hence, if $\mc{F}$ contains a quadratic form of rank $1$ or $2$, then $\mc{V}$ is not absolutely irreducible.

Assume now that $\mc{F}$ contains no form of rank $1$ or $2$.
Since $\car(E)\neq 2$ and since $\mc{F}$ consists of diagonal forms, it follows by basic linear algebra (Gauss elimination on the coefficient tuples) that, for any field extension $E'/E$, also the $E'$-vector space of forms in $E'[X_1,\dots,X_n]$ generated by $\mc{F}$ contains no form of rank $1$ or $2$.
Hence we may extend scalars and assume that $E$ is already algebraically closed, and it remains to be shown that $\mc{V}$ in this case is irreducible.

For $1\leq i\leq r$ we have 
$\varphi_i=-\sum_{j=r+1}^n a_{ij}X_j^2$. 
The condition that $\mc{F}$ contains no form of rank $1$ or $2$ implies that 
each of the $s$-tuples $(a_{i\, r+1},\dots,a_{i\, n})$, for $1\leq i\leq r$, 
has at least two nonzero coordinates and that no two of them can be obtained from one another by scaling with elements of $\mg{E}$.
It follows that in $R$ the product $\varphi_1\cdots\varphi_r$ is not divisible by the square of any element of $R\setminus E$.
In particular, no nontrivial product of some of the forms $\varphi_1,\dots,\varphi_r$ is a square in $F=E(X_{r+1},\dots,X_n)$, the fraction field of $R$.
This implies that $F(\sqrt{\varphi_1},\dots,\sqrt{\varphi_r})$ is a field extension of degree $2^r$ of $F$.
It follows that the coordinate ring $A$ of $\mc{V}$ is isomorphic to the subring $R[\sqrt{\varphi_1},\dots,\sqrt{\varphi_r}]$ of $F(\sqrt{\varphi_1},\dots,\sqrt{\varphi_r})$. Hence $A$ is a domain, whereby $\mc{V}$ is irreducible.
\end{proof}

A system $\mc{S}$ of quadratic forms defined on a finite-dimensional $E$-vector space $V$ is \emph{simultaneously diagonalisable} if, for $n=\dim_EV$, there exists an $E$-basis $(v_1,\dots,v_n)$ of $V$ such that $q(v_i,v_j)=q(v_i)+q(v_j)$ holds for all $q\in\mc{S}$ and $1\leq i<j\leq n$ (i.e.,~a simultaneous orthogonal basis for all forms in $\mc{S}$).

\begin{cor}\label{C:diag-systqf-absirred-check}
Let $V$ be an $n$-dimensional $E$-vector space. Let $r\in\nat$ and let $q_1,\dots,q_r:V\to E$ be quadratic forms over $V$ which are simultaneously diagonalisable.
Assume that for any $(a_1,\dots,a_r)\in E^r\setminus\{0\}$, 
the quadratic form $a_1q_1+\dots+a_rq_r$ has rank at least $3$.
Then the equations $q_1=\dots=q_r=0$ on $V$ define an absolutely irreducible projective variety of dimension $n-r-1$.
\end{cor}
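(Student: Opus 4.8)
The plan is to deduce the statement directly from \Cref{P:pencil-rank-geq3-absolirr} after passing to coordinates adapted to a simultaneous orthogonal basis.

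First I would use the hypothesis that $q_1,\dots,q_r$ are simultaneously diagonalisable to fix an $E$-basis $(v_1,\dots,v_n)$ of $V$ that is orthogonal for all of $q_1,\dots,q_r$ at once. Writing $X_1,\dots,X_n$ for the corresponding coordinate functions on $V$, each $q_i$ becomes a diagonal quadratic form $\sum_{j=1}^n a_{ij}X_j^2$ in $E[X_1,\dots,X_n]$, and the common zero locus of $q_1,\dots,q_r$ in $\mathbb{P}_E^{n-1}$ is unchanged under this identification.

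Next I would let $\mc{F}$ denote the $E$-span of $q_1,\dots,q_r$ inside the space of diagonal quadratic forms in $E[X_1,\dots,X_n]$. The rank hypothesis first forces linear independence: a nontrivial relation $a_1q_1+\dots+a_rq_r=0$ would exhibit a nonzero tuple $(a_1,\dots,a_r)\in E^r\setminus\{0\}$ whose associated form has rank $0<3$, contradicting the assumption. Hence $\dim_E\mc{F}=r$. Moreover, since the rank of a quadratic form is independent of the chosen coordinates, the assumption that $a_1q_1+\dots+a_rq_r$ has rank at least $3$ for every $(a_1,\dots,a_r)\in E^r\setminus\{0\}$ says precisely that $\mc{F}$ contains no quadratic form of rank $1$ or $2$.

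Finally I would apply \Cref{P:pencil-rank-geq3-absolirr} to the $r$-dimensional space $\mc{F}$: its vanishing set $\mc{V}$ in $\mathbb{P}_E^{n-1}$ has dimension $n-r-1$ and, since $\mc{F}$ contains no form of rank $1$ or $2$, is absolutely irreducible. As $\mc{V}$ is exactly the projective variety cut out by $q_1=\dots=q_r=0$, this yields the claim. There is no substantial obstacle here: the only points requiring care are the elementary facts that a simultaneous orthogonal basis diagonalises all the $q_i$ at once and that rank is a basis-independent invariant, both of which are immediate, so that the corollary is really just a reformulation of the proposition.
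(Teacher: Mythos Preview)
Your proof is correct and follows essentially the same route as the paper: choose a simultaneous orthogonal basis to rewrite the $q_i$ as diagonal forms, observe that the rank hypothesis forces $q_1,\dots,q_r$ to span an $r$-dimensional space $\mc{F}$ containing no form of rank $1$ or $2$, and then invoke \Cref{P:pencil-rank-geq3-absolirr}. The paper's version is terser but contains no additional idea beyond what you have written.
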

\begin{proof}
By taking an $E$-basis $(v_1,\dots,v_n)$ of $V$ such that $q_k(v_i+v_j)=q_k(v_i)+q_k(v_j)$ holds for $1\leq k\leq r$ and $1\leq i<j\leq n$, we identify the system of equations $q_1=\dots=q_r=0$ on $V$ with a system of $r$ diagonal forms in $E[X_1,\dots,X_n]$, and this identification is compatible with scalar extension.
The hypothesis yields that the $E$-vector space of quadratic forms which is generated by $q_1,\dots,q_r$ has dimension $r$ and contains no form of rank $1$ or $2$. 
Hence \Cref{P:pencil-rank-geq3-absolirr} yields the desired statement.
\end{proof}

Recall that $E_f=E[X]/(f)$ for $f\in E[X]$.

\begin{lem}\label{L:transfer-rank}
Let $f\in E[X]$. 
Set $\vartheta=X+(f)\in E_f$.
Let $s:E_f\to E$ be an $E$-linear form and let $r\in\nat$ be such that $s(\vartheta^i)=0$ for $0\leq i<r$ and $s(\vartheta^r)\neq 0$.
Then the rank of the form $q:E_f\to E,x\mapsto s(x^2)$ over $E$ is at least $r+1$.
\end{lem}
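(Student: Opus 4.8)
The plan is to read off the Gram matrix of the quadratic form $q$ in the standard monomial basis of $E_f$ and to locate within it an invertible $(r+1)\times(r+1)$ block. Set $n=\deg(f)=\dim_E E_f$, so that $1,\vartheta,\dots,\vartheta^{n-1}$ is an $E$-basis of $E_f$, and abbreviate $c_k=s(\vartheta^k)$ for $k\geq 0$. Writing a general element as $x=\sum_{i=0}^{n-1}x_i\vartheta^i$, one computes $q(x)=s(x^2)=\sum_{i,j}s(\vartheta^{i+j})\,x_ix_j$, so the symmetric matrix associated to $q$ in this basis is the Hankel matrix $M=(c_{i+j})_{0\leq i,j\leq n-1}$. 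By the definition of rank adopted in the paper, $\rk(q)$ equals the rank of $M$, so it suffices to produce an invertible submatrix of $M$ of size $r+1$.

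First I would record the bound $r\leq n-1$, which guarantees that such a submatrix fits inside $M$. Indeed, the hypotheses state $c_0=\dots=c_{r-1}=0$ and $c_r\neq 0$; if we had $r\geq n$, then $s$ would vanish on all of $1,\vartheta,\dots,\vartheta^{n-1}$, forcing $s=0$ and contradicting $c_r\neq 0$. Hence the principal block $A=(c_{i+j})_{0\leq i,j\leq r}$ is a genuine $(r+1)\times(r+1)$ submatrix of $M$, and $\rk(M)\geq\rk(A)$.

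The key observation is that $A$ is anti-triangular with a nonzero anti-diagonal. For $i+j<r$ we have $c_{i+j}=0$, while for $i+j=r$ every entry equals $c_r\neq 0$. Reversing the order of the columns via the permutation $j\mapsto r-j$ transforms $A$ into a lower triangular matrix all of whose diagonal entries equal $c_r$; this triangular matrix has determinant $c_r^{\,r+1}\neq 0$. Since a column reversal alters the determinant only by a sign, $\det(A)\neq 0$, so $A$ is invertible and $\rk(M)\geq r+1$, giving $\rk(q)\geq r+1$ as claimed.

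I do not anticipate a genuine obstacle: once the Hankel structure of $M$ is recognised, the argument is elementary linear algebra. The only points demanding a moment's care are the inequality $r\leq n-1$, needed so that $A$ actually sits inside $M$, and the sign bookkeeping when passing from the anti-triangular block $A$ to an honestly triangular matrix.
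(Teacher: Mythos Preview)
Your proof is correct and follows essentially the same approach as the paper: both identify the Gram matrix of $q$ on the span of $1,\vartheta,\dots,\vartheta^r$ as the Hankel matrix $(c_{i+j})_{0\leq i,j\leq r}$, which is anti-triangular with nonzero anti-diagonal entries $c_r$ and hence invertible. The only cosmetic difference is that the paper phrases this as restricting $q$ to the subspace $V=\bigoplus_{i=0}^r E\vartheta^i$, whereas you extract the corresponding principal $(r{+}1)\times(r{+}1)$ block from the full $n\times n$ Gram matrix.
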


\begin{proof}
It follows from the hypothesis that $\mc{B}=(1,\vartheta,\vartheta^2,\dots,\vartheta^r)$ is $E$-linearly independent.
Let $V=\bigoplus_{i=0}^rE\vartheta^i$.
The Gram matrix of the restricted quadratic form $q|_V$ with respect to the $E$-basis $\mc{B}$ of $V$ has 
entries $1$ on the counter-diagonal and $0$ everywhere above the counter-diagonal.
Therefore $q|_V$ is a regular form of rank equal to $\dim_EV=r+1$.
Hence the rank of $q$ is at least $r+1$.
\end{proof}

\begin{lem}\label{L:powerbasis-transfers}
Let $f\in E[X]\setminus E$ be square-free. 
Set  $\vartheta=X+(f)\in E_f$ and $n=\deg(f)$, so that $E_f=E[\vartheta]=\bigoplus_{i=0}^{n-1}E\vartheta^i$.
For $0\leq i\leq n-1$, let $s_i:E_f\to E$ denote the $E$-linear form determined by 
$s_i(\vartheta^j)=\delta_{ij}\mbox{ for }0\leq j\leq n-1\,\,.$
Let $k\in\{1,\dots,n-1\}$, $c_k,\dots,c_{n-1}\in E$ with $c_k\neq 0$ and $s=\sum_{i=k}^{n-1}c_{i}s_{i}$.
Then $$q:E_f\to E, x\mapsto s(x^2)$$ is a quadratic form over $E$ of rank at least ${k+1}$.
Furthermore $$q^\ast:E_f\times E\to E,(x,z)\mapsto q(x)-c_kz^2$$ is a quadratic form over $E$ of rank at least ${k+2}$.
\end{lem}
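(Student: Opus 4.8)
The plan is to deduce the first assertion directly from \Cref{L:transfer-rank} and then to obtain the second by an orthogonal-sum argument, using that in this section the rank of a quadratic form means the rank of its associated symmetric matrix.

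First I would evaluate the linear form $s=\sum_{i=k}^{n-1}c_is_i$ on the power basis $(1,\vartheta,\dots,\vartheta^{n-1})$ of $E_f$. Since $s_i(\vartheta^j)=\delta_{ij}$, we obtain $s(\vartheta^j)=\sum_{i=k}^{n-1}c_i\delta_{ij}$, which vanishes for $0\leq j<k$ and equals $c_k\neq 0$ for $j=k$. Thus $s$ satisfies the hypotheses of \Cref{L:transfer-rank} with $r=k$, and that lemma immediately yields that the form $q:x\mapsto s(x^2)$ has rank at least $k+1$.

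For the second statement, I would observe that $q^\ast$ is the orthogonal sum of $q$ on $E_f$ and the one-dimensional form $z\mapsto -c_kz^2$ on $E$. Writing $V=E_f\oplus E$ and choosing the basis obtained by adjoining the coordinate $z$ to the power basis of $E_f$, the symmetric matrix of $q^\ast$ is block diagonal with the symmetric matrix of $q$ as one block and the $1\times 1$ block $(-c_k)$ as the other. Since $c_k\neq 0$, the latter block has rank $1$, and the rank of a block-diagonal symmetric matrix is the sum of the ranks of its blocks. Hence the rank of $q^\ast$ equals the rank of $q$ plus $1$, which is at least $(k+1)+1=k+2$.

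There is no substantial obstacle here: the statement is a direct corollary of \Cref{L:transfer-rank} together with the additivity of rank under orthogonal sums. The only point requiring a little care is to record that $c_k\neq 0$ guarantees that the adjoined square $-c_kz^2$ is a nondegenerate rank-one summand, so that its contribution to the rank is exactly $1$ and the two lower bounds genuinely add.
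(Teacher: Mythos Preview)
Your proof is correct and follows the same approach as the paper: verify the vanishing conditions for $s$ so that \Cref{L:transfer-rank} applies with $r=k$, then pass to $q^\ast$ by adjoining the nondegenerate one-dimensional summand $-c_kz^2$. The paper's write-up is more terse---it simply remarks that $c_k\in\mg{E}$ yields the claim on $q^\ast$---but the content is identical.
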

\begin{proof}
The hypotheses imply that $s(\vartheta^i)=0$ for $0\leq i<k$ and $s(\vartheta^k)=c_k\neq 0$. Hence the claim on $q$ follows by \Cref{L:transfer-rank}, and since $c_k\in\mg{E}$, this implies the statement on $q^\ast$.
\end{proof}

\begin{thm}\label{T:transfer-curve}
Let $f\in E[X]\setminus E$ be separable and $g\in E[X]$ coprime to $f$.
There exists an absolutely irreducible affine curve $\mc{C}$ over $E$ such that the following holds for every field extension $E'/E$:
\begin{quotation}
$\mc{C}(E')\neq \emptyset$ if and only if there exists $a\in E'$ such that
$(X-a)\cdot g$ is a square modulo $f$ in $E'[X]$ and $f(a)\neq 0$.
\end{quotation}
\end{thm}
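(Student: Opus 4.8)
The plan is to realise the stated condition directly as an affine variety and then establish its absolute irreducibility by passing to a splitting field of $f$, where the defining forms become simultaneously diagonal, so that the rank criterion \Cref{C:diag-systqf-absirred-check} applies. Set $n=\deg(f)$, $\vartheta=X+(f)\in E_f$ and $h=g(\vartheta)$, which lies in $\mg{E_f}$ since $g$ is coprime to $f$. Let $s_0,\dots,s_{n-1}$ be the dual linear forms of the power basis, as in \Cref{L:powerbasis-transfers}. For a field extension $E'/E$ and $y=\sum_{i=0}^{n-1}y_i\vartheta^i\in E'_f$, the identity $y^2=(\vartheta-a)h$ in $E'_f$ is equivalent to the $n$ scalar equations $s_i(y^2)+a\,s_i(h)-s_i(\vartheta h)=0$ for $0\leq i\leq n-1$. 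These cut out an affine variety $\mc{W}\subseteq\mathbb{A}^{n+1}_E$ in the coordinates $(a,y_0,\dots,y_{n-1})$ whose $E'$-points are exactly the pairs $(a,y)$ with $(X-a)g$ a square modulo $f$ over $E'$. I would then take $\mc{C}=\mc{W}\setminus V(f(a))$, the principal open subset where $f(a)\neq0$; this is again affine and its $E'$-points capture precisely the asserted condition, so everything reduces to proving that $\mc{C}$ is an absolutely irreducible curve.

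First I would eliminate $a$. Since $h\in\mg{E_f}$, some $s_m(h)\neq0$; solving the $m$-th equation for $a$ shows that the projection $(a,y)\mapsto y$ is an isomorphism of $\mc{W}$ onto the affine variety $\mc{W}'\subseteq\mathbb{A}^n_E$ defined by the $n-1$ equations $\sigma_i(y^2)=c_i$ for $i\neq m$, where $\sigma_i=s_m(h)s_i-s_i(h)s_m$ and $c_i=\sigma_i(\vartheta h)$. Each $\sigma_i$ satisfies $\sigma_i(h)=0$, and the $n-1$ forms $\sigma_i$ form a basis of the hyperplane $W=\{s\in(E_f)^\ast:s(h)=0\}$. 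Homogenising with a new variable $z$, I consider the projective variety $\wt{\mc{W}}\subseteq\mathbb{P}^n_E$ defined by the $n-1$ quadratic forms $\sigma_i(y^2)-c_iz^2$, whose chart $z\neq0$ is $\mc{W}'$.

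Absolute irreducibility can be tested after base change to a finite splitting field $L$ of $f$, over which $E_f\otimes_EL\cong L^n$ with orthogonal idempotents $e_1,\dots,e_n$ corresponding to the roots $\theta_1,\dots,\theta_n$ of $f$, which are distinct because $f$ is separable. In the $L$-basis given by the $e_j$ (for the $y$-variables) together with $z$, every nonzero $L$-linear combination of the defining forms equals $\sum_{j}\sigma(e_j)w_j^2-\sigma(\vartheta h)z^2$ for some nonzero $\sigma$ in the $L$-span of $W$; in particular the system is simultaneously diagonalisable over $L$. Its rank equals the number of indices $j$ with $\sigma(e_j)\neq0$, increased by $1$ when $\sigma(\vartheta h)\neq0$. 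The crux of the argument is to show this rank is always at least $3$: since $\sigma(h)=\sum_j g(\theta_j)\sigma(e_j)=0$ and every $g(\theta_j)\neq0$ by coprimality, the set of $j$ with $\sigma(e_j)\neq0$ has at least two elements; and if it is exactly $\{j_1,j_2\}$, then a short computation gives $\sigma(\vartheta h)=(\theta_{j_1}-\theta_{j_2})g(\theta_{j_1})\sigma(e_{j_1})$, which is nonzero precisely because $f$ is separable, so the $z^2$-term contributes the missing rank. Thus \Cref{C:diag-systqf-absirred-check} applies and shows that $\wt{\mc{W}}$ is absolutely irreducible of dimension $(n+1)-(n-1)-1=1$.

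It remains to descend to the affine curve and insert the open condition. The locus $z=0$ of $\wt{\mc{W}}$ is cut out by the $\sigma_i(y^2)$ in $\mathbb{P}^{n-1}_E$ and has dimension $0$ by \Cref{P:pencil-rank-geq3-absolirr}, so $\wt{\mc{W}}\not\subseteq\{z=0\}$ and its chart $\mc{W}'\cong\mc{W}$ is an absolutely irreducible affine curve. Finally, over $\ovl{E}$ one has $\ovl{E}_f\cong\ovl{E}^{\,n}$, in which every element with nonvanishing coordinates is a square; hence for any $a\in\ovl{E}$ that is not a root of $f$ the element $(\vartheta-a)g(\vartheta)$ is a square and yields a point of $\mc{W}(\ovl{E})$. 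Therefore $f(a)$ does not vanish identically on $\mc{W}$, so $\mc{C}=\mc{W}\setminus V(f(a))$ is a nonempty open subset of the absolutely irreducible curve $\mc{W}$, hence itself an absolutely irreducible affine curve, and by construction $\mc{C}(E')\neq\emptyset$ if and only if there exists $a\in E'$ with $f(a)\neq0$ such that $(X-a)g$ is a square modulo $f$ in $E'[X]$. I expect the rank computation — and specifically verifying that the homogenising term $\sigma(\vartheta h)$ survives in the support-two case — to be the main obstacle, as this is exactly the point where both separability of $f$ and coprimality of $g$ are indispensable.
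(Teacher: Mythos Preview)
Your proof is correct and follows the same overall strategy as the paper: realise the condition as the vanishing of a system of $n-1$ quadratic forms on an $(n+1)$-dimensional space, pass to a splitting field to diagonalise simultaneously via the idempotent basis, verify the rank-$\geq 3$ hypothesis, and apply \Cref{C:diag-systqf-absirred-check}. The differences are in bookkeeping and in the rank step. The paper parametrises via $g(\vartheta)x^2=\vartheta-a$ rather than $y^2=(\vartheta-a)g(\vartheta)$, so that $a$ is automatically the constant term and the $n-1$ defining equations $s_i(g(\vartheta)x^2)=\delta_{i1}$ for $1\leq i\leq n-1$ (with the inhomogeneous one homogenised as $s_1(g(\vartheta)x^2)-\lambda^2$) arise at once; your explicit introduction of $a$ followed by its elimination via the $m$-th equation produces an isomorphic variety. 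For the rank bound, the paper invokes \Cref{L:powerbasis-transfers}, which uses the counter-diagonal Gram matrix in the power basis $(1,\vartheta,\dots,\vartheta^{n-1})$ to show that $x\mapsto s(x^2)$ has rank at least $k+1$ whenever $s=\sum_{i\geq k}c_is_i$ with $c_k\neq 0$; your idempotent computation is an alternative that bypasses that lemma and makes the roles of separability ($\theta_{j_1}\neq\theta_{j_2}$) and coprimality (all $g(\theta_j)\neq 0$) directly visible. Both routes land at the same projective intersection of quadrics and the same invocation of \Cref{C:diag-systqf-absirred-check}.
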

\begin{proof}
We fix an algebraic closure $E_\alg$ of $E$, and we set $L=E_\alg[X]/(f)$ and $\vartheta=X+(f)\in L$, whereby $L=E_\alg[\vartheta]$.
For any field extension $E'/E$, we set 
\begin{eqnarray*}
\mc{C}'(E') & = & \left\{(c_0,\dots,c_{n-1})\in E'^{n}\,\,\quad\left\vert\,\, g(\vartheta)\cdot\left(\mbox{$\sum$}_{i=0}^{n-1}c_i\vartheta^i\right)^2-\vartheta\in E'\right.\right\}\,\mbox{ and }\\
\mc{C}(E') & = & \left\{(c_0,\dots,c_{n-1})\in \mc{C}'(E')\,\,\left\vert\,\, \N_{E'_f/E'}\left(\mbox{$\sum$}_{i=0}^{n-1}c_i\vartheta^i\right)\neq 0\right.\right\}.
\end{eqnarray*}
Clearly $\mc{C}'$ is an affine $E$-variety, given as a closed subvariety of $\mathbb{A}^n_E$.
We claim that $\mc{C}'$ is an absolutely irreducible curve.
Since $f$ is separable, the $E_\alg$-algebra $L$ is isomorphic to $E_\alg^{\, n}$, and we obtain that $\mg{L}=\sq{L}$.
Since $g$ is coprime to $f$, it follows that $g(\vartheta)\in\mg{L}=\sq{L}$.

We will show that $C'=\mc{C}'(E_\alg)$ is an irreducible curve.
We denote by $C^\ast$ the $E$-Zariski closure of $C'$ in $\mathbb{P}^n_E(E_\alg)$ (i.e.~the variety defined by the homogenisations of polynomials defining $C'$ in $\mathbb{A}^n_E$).
Let $s_1,\dots,s_{n-1}:L\to E_\alg$ be the $E_\alg$-linear forms such that $s_i(\vartheta^j)=\delta_{ij}$ for $1\leq i<n$ and $0\leq j<n$.
Then
 ${C}^\ast$ is given in $\mathbb{P}^n_E({E_\alg})$ as the vanishing set of the following system of $n-1$ quadratic forms defined on the $(n+1)$-dimensional $E_\alg$-vector space $L\times E_\alg$:
\begin{eqnarray*}
q_1:& L\times E_\alg\to E_\alg,&(x,\lambda)\mapsto s_1(g(\vartheta)x^2)-\lambda^2\\
q_i:& L\times E_\alg\to E_\alg,&(x,\lambda)\mapsto s_i(g(\vartheta)x^2)\hspace{1.2cm}(2\leq i\leq n-1)
\end{eqnarray*}

By \Cref{L:powerbasis-transfers}, every nontrivial $E_\alg$-linear combination of the quadratic forms $q_1,\dots,q_{n-1}$ has rank at least $3$.
Since $L\times E_\alg\simeq E_\alg^{n+1}$, the primitive (i.e.~indecomposable) idempotents in $L\times E_\alg$ form an $E_\alg$-basis of $L\times E_\alg$. This $E_\alg$-basis is a common diagonal basis for $q_1,\dots,q_{n-1}$.
Hence the system of quadratic forms $(q_1,\dots,q_{n-1})$ is simultaneously diagonalisable.
Therefore \Cref{C:diag-systqf-absirred-check} shows that ${C}^\ast$ is irreducible, and hence so is $C'$. 

Note that $C=\mc{C}(E_\alg)$ is an affine $E$-Zariski open subset of $C'$.
Furthermore, as $\mg{L}=\sq{L}$, for every $a\in E_\alg$ with $f(a)\neq 0$ there exist
$c_0,\dots,c_{n-1}\in E_\alg$ such that $g(\vartheta)(\sum_{i=0}^{n-1}c_i\vartheta^i)^2=\vartheta-a$.
In particular $C$ is nonempty.
Hence $\mc{C}$ is an absolutely irreducible affine curve over $E$.

Consider finally an arbitrary field extension $E'/E$.
Since $g$ is coprime to $f$, $g(\vartheta)$ is invertible in $E'[\vartheta]$.
For $a\in \mg{E'}$ it follows that $(X-a)\cdot g$ is a square modulo $f$ in $E'[X]$ if and only if 
$\vartheta-a=g(\vartheta)h(\vartheta)^2$ for some $h\in E'[X]$, and in that case we have
 $f(a)\neq 0$ if and only if $h(\vartheta)\in\mg{E'[\vartheta]}$, if and only if $\N_{E'_f/E'}(h(\vartheta))\neq 0$.
From this we conclude that $\mc{C}$ has the properties claimed in the statement.
\end{proof}

\begin{prop}\label{P:linear-square-class-realisation}
Let $f\in E[X]\setminus E$ be separable. Set $\vartheta=X+(f)\in E_f$.
Assume that the affine curve $Y^2=f(X)$ over $E$ has a point of odd degree and, for every $\alpha\in\mg{E_f\!\!}$, there exists 
$a\in E$ such that $\vartheta-a\in\alpha\sq{E_f\!\!}$.
Then $f$ is square-reflexive.
\end{prop}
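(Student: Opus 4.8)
The plan is to deduce square-reflexivity of $f$ from the equivalence $(i)\Leftrightarrow(ii')$ in \Cref{SRtoRC}. To bring that equivalence into play I first extract an auxiliary polynomial from the odd-degree point: running the argument in the proof of \Cref{L:hyper-odd} on a point $(x_0,y_0)$ of odd degree, one finds $y_0\in E[x_0]$, so that $y_0=s(x_0)$ for some $s\in E[X]$, and the minimal polynomial of $x_0$, which has odd degree, divides $f-s^2$. Hence $f-s^2$ has a factor of odd degree, and \Cref{SRtoRC} reduces the task to verifying $(ii')$. I then fix an irreducible factor $r$ of $f-s^2$ of odd degree; since $f\equiv s^2\pmod r$, the polynomial $f$ is a square modulo $r$. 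I will moreover arrange that $r$ is coprime to $f$ (equivalently, that the odd-degree point is chosen with $y_0\neq 0$).

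To verify $(ii')$, let $\rho\in\mf R_2(E)$ with $\supp(\rho)\subseteq\supp(f)$. By the Chinese Remainder Theorem the finite ramification components are independent, so I choose $\alpha\in\mg{E_f\!\!}$ with $\rho_p=\{\alpha_p\}$ for all $p\in\supp(f)$; as $\rho_\infty=0$, \Cref{L:norm-at-infinity} gives $\N_{E_f/E}(\alpha)\in\sq{E}$. Using the second hypothesis I pick $a\in E$ with $\vartheta-a\in\alpha\,r(\vartheta)^{-1}\sq{E_f\!\!}$; then $\vartheta-a\in\mg{E_f\!\!}$, so $f(a)\neq 0$ and $X-a$ is coprime to $f$. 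Setting $g=r\,(X-a)$, the class of $g$ modulo $f$ lies in $\alpha\sq{E_f\!\!}$, so $\partial(\{f,g\})$ agrees with $\rho$ on $\supp(f)$. Granting for the moment that $f(a)\in\sq{E}$, the polynomial $f$ is a square modulo $g$, whence $\partial(\{f,g\})$ has no component on $\supp(g)$ and its support is contained in $\supp(f)\cup\{\infty\}$. Applying \Cref{L:norm-at-infinity} once more, now to $\partial(\{f,g\})$ with the representative $g+(f)\in\alpha\sq{E_f\!\!}$, its value at $\infty$ equals $\{\N_{E_f/E}(\alpha)\}=0$. Therefore $\partial(\{f,g\})=\rho$, which establishes $(ii')$ and hence the proposition.

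It remains to see that $f(a)\in\sq{E}$. From $f(X)=\lc(f)\prod_i(X-r_i)$ one has $f(a)=(-1)^{\deg(f)}\lc(f)\,\N_{E_f/E}(\vartheta-a)$, and the choice of $a$ gives $\N_{E_f/E}(\vartheta-a)\in\N_{E_f/E}(\alpha)\,\N_{E_f/E}(r(\vartheta))^{-1}\sq{E}$. The key computation is that $\N_{E_f/E}(r(\vartheta))\equiv(-1)^{\deg(f)}\lc(f)\pmod{\sq{E}}$: writing this norm as a resultant and using the symmetry $\mathrm{Res}(f,r)=(-1)^{\deg(f)\deg(r)}\mathrm{Res}(r,f)$ together with $\N_{E_r/E}(f(\omega))\in\sq{E}$ (which holds because $f$ is a square modulo $r$, where $E_r=E[X]/(r)$ and $\omega=X+(r)$) and the fact that $\deg(r)$ is odd yields the claim. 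Combining the three relations with $\N_{E_f/E}(\alpha)\in\sq{E}$ gives $f(a)\in\sq{E}$, as needed.

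The main obstacle is precisely this last norm identity: it is what forces the auxiliary linear factor $X-a$ to ramify trivially, and without it the symbol $\{f,g\}$ would in general acquire an unwanted ramification component at $X-a$ and, by the reciprocity built into $\mf R_2(E)$, a matching one at $\infty$. A secondary technical point is to guarantee that the odd-degree factor $r$ can be chosen coprime to $f$, that is, that the odd-degree point on $Y^2=f(X)$ may be taken with nonzero ordinate.
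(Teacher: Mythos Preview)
Your overall strategy matches the paper's: verify Condition $(ii')$ of \Cref{SRtoRC} by combining an odd-degree factor of some $f-s^2$ with a linear factor $X-a$ produced by the square-class hypothesis. The resultant computation showing $f(a)\in\sq{E}$ is correct. However, the coprimality of $r$ and $f$, which you flag as a ``secondary technical point,'' is a genuine gap that you do not close: if the only odd-degree points on $Y^2=f(X)$ have zero ordinate (i.e.~the point comes from an odd-degree irreducible factor of $f$), then $r(\vartheta)$ is a zero-divisor in $E_f$ and your application of the hypothesis to $\alpha\,r(\vartheta)^{-1}$ makes no sense.

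The paper sidesteps this in two ways. First, rather than inverting $r(\vartheta)$, it subtracts $\partial(\{f,q\})$ from $\rho$ \emph{before} choosing the representative: setting $\rho'=\rho-\partial(\{f,q\})$ and picking $\alpha'\in\mg{E_f\!\!}$ with $\rho'_p=\{\alpha'_p\}$ for $p\in\supp(f)$, one applies the linear hypothesis directly to $\alpha'$. This works whether or not $q$ divides $f$. Second, the paper never checks that $f(a)\in\sq{E}$: after verifying that $\rho''=\rho-\partial(\{f,q(X-a)\})$ vanishes on $\supp(f)$, at $q$ (when $q\notin\supp(f)$), and at $\infty$ (since $q(X-a)$ is monic of even degree), one is left with $\supp(\rho'')\subseteq\{X-a\}$ and $\rho''_\infty=0$, and the degree-$1$ case of \Cref{L:norm-at-infinity} forces $\rho''=0$. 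This replaces your entire resultant calculation by a one-line appeal to reciprocity. Your gap \emph{can} be filled---if $f$ has an odd-degree irreducible factor $p$, apply the second hypothesis to some $\alpha$ with $\N_{E_f/E}(\alpha)\in(-1)^{\deg f}\lc(f)\,\sq{E}$ (such $\alpha$ exists because $\N_{E_p/E}(d)=d^{\deg p}\equiv d\bmod\sq{E}$ for $d\in\mg{E}$) to produce $a\in E$ with $f(a)\in\sq{E}$, hence an $E$-rational point with nonzero ordinate; and if $f$ has no such factor, any odd-degree point automatically has $y_0\neq 0$---but the paper's route avoids this detour entirely.
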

\begin{proof}
By the hypothesis, there exists $s\in E[X]$ such that $f-s^2$ has a factor $q\in\mc{P}$ of odd degree. 
Hence it suffices to check Condition $(ii')$ of \Cref{SRtoRC}.

Consider an arbitrary $\rho\in\mf R_2(E)$ with $\supp(\rho)\subseteq\supp(f)$. 
Set $\rho'=\rho-\partial(\{f,q\})$ and $c=\lc(f)$.
Then $\supp(\rho')\subseteq \supp(f)\cup\{\infty\}$ and $\rho'_\infty=\{(-1)^{\deg(f)}c\}$.
Let $\alpha\in \mg{E_f\!\!}$ be such that $\rho'_p=\{\alpha_p\}$ for all $p\in\supp(f)$.
By the hypothesis, there exists $a\in E$ such that $\vartheta-a\in\alpha\sq{E_f\!\!}$.
For any $p\in\supp(f)$, we obtain that $\partial_p(\{f,X-a\})=\{\alpha_p\}=\rho'_p$ and thus 
$\partial_p(\{f,q\cdot (X-a)\})=\rho_p$. 
If $q\notin\supp(f)$, then $\partial_q(\{f,q\cdot (X-a)\})=0=\rho_q$, because $f$ is a square modulo $q$.
We set $\rho''=\rho-\partial(\{f,q\cdot (X-a)\})$.
As $(X-a)\cdot q$ is monic of even degree, we have $\rho''_\infty=\rho_\infty=0$.
Hence $\rho''_p=0$ for all $p\in\mc{P}'$ except possibly for $p=X-a$.
As $\rho''\in\mf R_2(E)$, \Cref{L:norm-at-infinity} yields that $\rho''=0$, whereby $\rho=\partial(\{f,q\cdot (X-a)\})$.
\end{proof}

Recall that $E$ is called \emph{pseudo-algebraically closed} if every absolutely irreducible curve over $E$ has an $E$-rational point.

We obtain a new proof for a result due to Efrat \cite{Efr01} in the special case of a rational function field over a pseudo-algebraically closed field. (The result in \cite{Efr01} covers function fields of curves in general over such a base field.)

\begin{thm}\label{C:PAC}
Assume that $E$ is pseudo-algebraically closed.
Then $E$ is square-reflexive. In particular,
$u(E(X))\leq 4$ and quadratic forms over $E(X)$ satisfy the local-global principle for isotropy with respect to $\Omega_{E(X)/E}$.
\end{thm}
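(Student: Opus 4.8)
The plan is to show that every square-free polynomial in $E[X]$ is square-reflexive; the two assertions $u(E(X))\leq 4$ and the local--global principle for isotropy over $E(X)$ with respect to $\Omega_{E(X)/E}$ then follow at once from the implication $(i)\Rightarrow(ii)$ of \Cref{LGPI4}. The first move is to reduce to the case where $E$ is perfect. If $\car(E)=0$ there is nothing to do, so suppose $\car(E)=p>2$ and let $E^{\mathrm{perf}}$ be the perfect closure of $E$. This extension is the direct limit of its finite subextensions, each of which is purely inseparable over $E$ and hence of degree a power of $p$, in particular of odd degree. Since the PAC property is preserved under purely inseparable algebraic extensions, $E^{\mathrm{perf}}$ is again pseudo-algebraically closed; so if $E^{\mathrm{perf}}$ can be shown to be square-reflexive, then \Cref{C:srf-Springer} yields that $E$ is square-reflexive. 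Hence I may assume from now on that $E$ is perfect, so that every square-free polynomial in $E[X]$ is separable.

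Now let $f\in E[X]$ be square-free. Constants and the case $\deg(f)\leq 2$ are disposed of by \Cref{SRuptodeg2}, so I take $f$ non-constant and separable and verify the two hypotheses of \Cref{P:linear-square-class-realisation}, writing $\vartheta=X+(f)\in E_f$. For the odd-degree point: since $\car(E)\neq 2$ and $f$ is separable and non-constant, $f$ is not a square in $E[X]$ and $Y^2-f(X)$ is absolutely irreducible, so the affine curve $Y^2=f(X)$ is an absolutely irreducible curve over $E$; as $E$ is PAC it carries an $E$-rational point, which is a point of degree $1$, hence of odd degree. For the square-class condition: given $\alpha\in\mg{E_f\!\!}$, choose $g\in E[X]$ coprime to $f$ with $g(\vartheta)=\alpha$, and let $\mc{C}$ be the absolutely irreducible affine curve attached to $f$ and $g$ by \Cref{T:transfer-curve}. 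Since $E$ is PAC, $\mc{C}(E)\neq\emptyset$, so there exists $a\in E$ such that $(X-a)\cdot g$ is a square modulo $f$ and $f(a)\neq 0$; passing to $E_f$ gives $(\vartheta-a)g(\vartheta)\in\sq{E_f\!\!}$, whence $\vartheta-a\in g(\vartheta)^{-1}\sq{E_f\!\!}=g(\vartheta)\sq{E_f\!\!}=\alpha\sq{E_f\!\!}$, using that the square-class group has exponent $2$. Thus both hypotheses hold and \Cref{P:linear-square-class-realisation} shows that $f$ is square-reflexive.

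Having established square-reflexivity for all square-free polynomials, $E$ is square-reflexive, and the ``in particular'' assertions are immediate from \Cref{LGPI4}. The genuine obstacle in assembling the argument is not the geometry but the separability hypothesis built into \Cref{T:transfer-curve} and \Cref{P:linear-square-class-realisation}: over an imperfect PAC field a square-free polynomial need not be separable, and this is exactly what forces the reduction to $E^{\mathrm{perf}}$ via \Cref{C:srf-Springer} and relies on the stability of the PAC property under purely inseparable extensions. Beyond this reduction, everything is a direct application of the transfer-curve machinery: the absolute irreducibility of the curve $\mc{C}$ is already supplied by \Cref{T:transfer-curve}, that of $Y^2=f(X)$ is elementary, and the PAC property simply converts these irreducibility statements into the rational points needed to feed \Cref{P:linear-square-class-realisation}.
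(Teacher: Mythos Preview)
Your argument is correct and follows essentially the same route as the paper's proof: reduce to the perfect case via \Cref{C:srf-Springer}, then apply \Cref{T:transfer-curve} together with \Cref{P:linear-square-class-realisation} using the PAC property. The only differences are cosmetic: you perform the reduction to the perfect closure at the start rather than at the end, and you make explicit the verification that $Y^2=f(X)$ has an $E$-rational point (hence a point of odd degree), which the paper leaves implicit when invoking \Cref{P:linear-square-class-realisation}.
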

\begin{proof}
In view of \Cref{P:123-lgp} and \Cref{LGPI4}, we only need to show that $E$ is square-reflexive.
Assume first that $E$ is perfect.
Let $f\in E[X]$ be square-free. As $E$ is perfect, $f$ is separable.
By the hypothesis, every absolutely irreducible curve over $E$ has an $E$-rational point.
It follows by 
\Cref{T:transfer-curve} that every square-class of $E_f$ is given by 
$\vartheta-a$ for $\vartheta=X+(f)\in E_f$ and some $a\in E$, and by \Cref{P:linear-square-class-realisation} this implies that $f$ is square-reflexive.
This shows $E$ is square-reflexive.

In the general case, let $E'$ denote the perfect closure of $E$.
Note that $E'$ is also pseudo-algebraically closed, and hence $E'$ is square-reflexive, by the above argument.
Since $E$ is of characteristic different from $2$,
$E'/E$ is a direct limit of finite extensions of odd degree.
Since $E'$ is square-reflexive, it follows by \Cref{C:srf-Springer} that $E$ is square-reflexive.
\end{proof}

\begin{rem}
Using \Cref{C:PAC}, we readily get a second proof of \Cref{FFSR}.
Suppose that $E$ is finite of odd cardinality. Let $E'/E$ be an odd closure of $E$. By \cite[Corollary 11.2.4]{FJ}, then $E'$  is pseudo-algebraically closed. (This follows directly from the Hasse-Weil bound.) 
Hence $E'$ is square-reflexive, by \Cref{C:PAC}.
Then \Cref{C:srf-Springer} yields that $E$ is square-reflexive.
\end{rem}

\bibliographystyle{amsalpha}

\end{document}